\newcommand{\cylinders}{%
    \begin{tikzpicture}[scale=0.13, baseline=-0.5ex]
        \draw (0,1.5) -- (1.5,0) -- (0,-1.5) -- (-1.5,0) -- cycle;
    \end{tikzpicture}
}
\newcommand{\simplex}{%
    \begin{tikzpicture}[scale=0.13, baseline=-0.5ex]
        \draw (0,1.5) -- (1.299,-0.75) -- (-1.299,-0.75) -- cycle;
    \end{tikzpicture}
}
\newcommand{\tetra}{%
  \begin{tikzpicture}[scale=0.13, baseline=-0.5ex]
        \draw (0,1.4) -- (1.299,-0.6) -- (-1.299,-0.6) --  (0,1.4) -- (-0.5, - 1.5)--(1.299,-0.6)--(-1.299,-0.6)-- (-0.5, - 1.5);
    \end{tikzpicture}
}
\numberwithin{equation}{section}
\theoremstyle{plain}
\newcommand{\be}{\begin{equation}}
\newcommand{\bee}{\begin{equation*}}
\newcommand{\bea}{\begin{equation}\begin{aligned}}
\newcommand{\eea}{\end{aligned}\end{equation}}
\newcommand{\ee}{\end{equation}}
\newcommand{\eee}{\end{equation*}}
\newcommand{\bsp}{\begin{split}}
\newcommand{\esp}{\end{split}}
\newtheorem{theorem}{Theorem}[section]
\newtheorem{proposition}[theorem]{Proposition}
\newtheorem{corollary}[theorem]{Corollary}
\newtheorem{conjecture}[theorem]{Conjecture}
\newtheorem{claim}[theorem]{Claim}
\theoremstyle{remark}
\newtheorem{remark}[theorem]{Remark}
\theoremstyle{definition}
\newtheorem{definition}[theorem]{Definition}
\newcommand{\cC}{\mathcal{C}}
\renewcommand{\cH}{\mathcal{H}}
\newcommand{\fp}{\mathfrak{p}}
\begin{document}

\title[Ancient ovals in higher dimensional mean curvature flow]{Classification of ancient ovals in higher dimensional mean curvature flow}

\author[Beomjun Choi,  Wenkui Du, Ziyi Zhao]{Beomjun Choi,  Wenkui Du, Ziyi Zhao}

\begin{abstract}
We study compact non-selfsimilar ancient noncollapsed  solutions to the mean curvature flow in $\mathbb{R}^{n+1}$, called ancient ovals. Our main result is the classification of $k$-ovals: any $k$-oval (characterized by having cylindrical blow down $\mathbb{R}^k\times S^{n-k}$ and the quadratic bending asymptotics) belongs, up to space-time rigid motions and parabolic dilations, to the family of ancient ovals constructed by Haslhofer and the second author. Assuming the nonexistence of exotic ovals (recently proved by Bamler--Lai), this yields a classification of all ancient ovals and identifies the moduli space, modulo symmetries, with an open $(k-1)$-simplex modulo the symmetry of simplex. Although these conclusions are contained in the recent breakthrough of Bamler--Lai classifying all ancient asymptotically cylindrical flows and resolving the mean convex neighborhood conjecture, we give an alternative argument for the independently obtained classification of $k$-ovals in arbitrary dimensions based on a different spectral parametrization. 
\end{abstract}

\maketitle

\tableofcontents

\section{Introduction}\label{classification} 

In recent years, the analysis of singularity formation in geometric evolution equations such as the mean curvature flow and the Ricci flow has achieved significant progress through the classification of potential singularity models, namely, ancient solutions that may arise as limit flows near a singularity. This approach combines powerful tools from PDE and analysis with geometric intuition. 

Very recently, a major breakthrough was achieved by Bamler and Lai \cite{BL1,BL2}, where they established the classification of  ancient asymptotically cylindrical solutions to the mean curvature flow in \emph{all dimensions}, thereby concluding a line of research in the classification program. The remarkable results of Bamler and Lai \cite{BL1,BL2} nontrivially extends the previous results towards the complete classification of  ancient noncollapsed solutions {\cite{BC1, BC2, ADS18, DH_ovals, DH_hearing_shape, DH_no_rotation, CHH_wing, DZ_spectral_quantization, Zhu, CDDHS, CH-classification-r4, CDZ_ovals}} and the  classification of asymptotically cylindrical solutions \cite{CHH, CHHW, CHH_revisiting}. Moreover, as an application of the classification of ancient asymptotically cylindrical solutions, they established the mean-convex neighborhood theorem around cylindrical singularity in all dimensions conjectured by Ilmanen \cite{Ilmanen_lectures}. Compared to previous works, the  approach of Bamler and Lai is notable in that it treats this essentially noncompact classification problem in a manner akin to the classification of compact ancient solutions. Also they address the problem from the outset without imposing geometric hypotheses such as the noncollapsing condition. Moreover, their novel PDE-ODI principle is expected to  have strong impact that goes beyond mean curvature flow.

The aim of the present paper is to give an alternative argument for the classification of ancient ovals in arbitrary dimensions, which we have obtained independently. We note that the classification of ancient ovals is also contained in the recent breakthrough by Bamler and Lai \cite{BL1,BL2}, see Remark \ref{remark-BLandours} below. Ancient ovals have been studied intensively over the last ten years \cite{ADS1, ADS18, White_nature, HaslhoferHershkovits_ancient,  DH_ovals, CDDHS, CDZ_ovals}, so we hope our alternative argument is of interest in particular to more analytically inclined readers.
Our first goal is to prove the classification of ancient ovals, which refers to  non-selfsimilar, compact ancient noncollapsed solutions. An important subclass of ancient ovals is the family of $k$-ovals \cite[Definition 1.7]{DZ_spectral_quantization} whose solution profiles have quadratic bending in all directions, which have been expected to exhaust all ancient ovals. We show that a given $k$-oval belongs to an ancient oval constructed by  Haslhofer and the second author in \cite{DH_ovals}; see Theorem \ref{classification_theorem}. Assuming there is no ancient oval that is not a $k$-oval (see Conjecture \ref{oval-koval}, which is now a theorem by \cite{BL1,BL2}), we show that the solutions constructed in \cite{DH_ovals} give a complete list of ancient ovals. Moreover, we show that the space of ancient ovals, endowed with the topology of smooth local convergence and modulo space-time rigid motions and parabolic dilatations, is homeomorphic to a $(k-1)$-dimensional simplex modulo the symmetry of the simplex; see Theorem \ref{cor_moduli}. See Remark \ref{remark-BLandours} for how our results follows as a consequence of \cite{BL1,BL2}.

Let us recall the class of ancient ovals constructed in \cite{DH_ovals}, which vanish at space-time origin and have $\mathbb{R}^k\times S^{n-k}(\sqrt{2(n-k)|t|})$ as their tangent flow at $-\infty$. Specifically, we consider $a^j\geq 0$ and $
a^1 + a^2 + \cdots+ a^k = 1$.
Then for any such $a=(a^{1}, \dots, a^{k})$ and any $\ell<\infty$ one considers the ellipsoid
\begin{equation}
E^{\ell,a}:=\left\{x\in\mathbb{R}^{n+1}: \sum_{j=1}^{k}\frac{(a^{j})^2}{\ell^2}x_j^2 + \sum_{j=k+1}^{n+1}x_j^2 = 2(n-k) \right\}\, ,
\end{equation} and the mean curvature flow starting from the ellipsoid, namely $E^{\ell,a}_t$. One then chooses time-shifts $t^{\ell,a}$ and dilation factors $\lambda^{\ell,a}$ so that the flow
\begin{equation}
M^{\ell,a}_{t} := \lambda^{\ell,a} \cdot E^{\ell,a}_{ {(\lambda^{\ell,a}})^{-2} t+t^{\ell,a}}
\end{equation}
becomes extinct at time $0$ and satisfies 
\begin{equation}
\int_{M^{\ell,a}_{-1}}\frac{1}{(4\pi)^{n/2}}e^{-\frac{|x|^2}{4}}=\frac{1}{2}(\text{Ent}(S^{n-k}\times \mathbb{R}^{k})+\text{Ent}(S^{n-k+1}\times \mathbb{R}^{k-1})).
\end{equation}

Considering sequences $a_i\in (0, 1)$  and $\ell_i\to \infty$, a subclass of ancient ovals is then constructed by
\begin{equation}\label{Ao}
\mathcal{A}^{k, \circ}:=\left\{ \lim_{i\to \infty} M^{\ell_i,a_i}_t : \textrm{the limit along $a_i,\ell_i$ exists and is compact}\right\}\, .
\end{equation}
Naturally ancient flows in $\mathcal{A}^{k, \circ}$ have $\mathbb{Z}_2^k\times O(n-k+1)$ symmetry. Moreover \cite{DH_blowdown} shows that for every prescribed geometric width ratios in the first $k$ axes, there exists a solution in $\mathcal{A}^{k, \circ}$ which has the prescribed ratios at $t=1$. Also it was shown that solutions in $\mathcal{A}^{k, \circ}$ are noncollapsed.

\subsection{Main results}
Our main result is the classification of $k$-ovals extending the  rigidity result of $k$-ovals established in  \cite{CDZ_ovals}. Roughly speaking, an ancient oval is a $k$-oval if its tangent flow at $-\infty$ is $\mathbb{R}^k \times \mathbb{S}^{n-k}(\sqrt {2(n-k)})$  and the higher order asymptotics around the cylinder exhibit quadratic bending in every cylindrical direction. For the precise definition, see \cite[Definition 1.7]{DZ_spectral_quantization}. By the recent result of Bamler-Lai in \cite{BL1,BL2}, we now know that an ancient oval is necessarily a $k$-oval for some $k=1,\ldots, n-1$.  We now state our main theorem:

\begin{theorem}[classification of $k$-ovals]\label{classification_theorem}
Any $k$-oval in $\mathbb{R}^{n+1}$  belongs, up to space-time rigid motion and parabolic dilation, to the class  $\mathcal{A}^{k, \circ}$.
\end{theorem}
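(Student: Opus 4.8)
The plan is to follow a spectral-parametrization strategy: show that any $k$-oval $M_t$ is determined, up to the ambient symmetries, by finitely many continuous parameters, and that the same parameters are realized by the constructed family $\mathcal{A}^{k,\circ}$. First I would set up the rescaled flow $\bar M_\tau$ around the cylindrical blow-down $\mathbb{R}^k\times S^{n-k}(\sqrt{2(n-k)})$ and analyze the linearized operator on the cylinder. The neutral (zero) eigenmodes are precisely the quadratic polynomials $y_iy_j$ in the cylindrical directions, and by the $k$-oval hypothesis the flow bends quadratically in every such direction. The first step is therefore to extract, from the quadratic bending asymptotics, a well-defined symmetric $k\times k$ ``bending matrix'' $Q=Q(M_t)$ governing the leading-order behavior of $\bar M_\tau$ as $\tau\to-\infty$; after a rotation in $\mathbb{R}^k$ we may take $Q$ diagonal with positive entries, and after a parabolic dilation we normalize its trace, so the invariant content is a point in the open $(k-1)$-simplex $\{a^j>0,\ \sum a^j=1\}$ modulo permutations.

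The second step is a rigidity/uniqueness statement: given the diagonalized bending data $a=(a^1,\dots,a^k)$, the $k$-oval is unique up to space-time rigid motion and parabolic dilation. I would prove this by a Merle--Zaag-type analysis combined with a barrier/contraction argument in the spirit of \cite{ADS18, CDZ_ovals}: first control the unstable and neutral modes to show that two $k$-ovals with the same $a$ are asymptotic to infinite order as $\tau\to-\infty$ in the cylindrical region, then upgrade this to exact agreement using backward uniqueness for the rescaled flow (a Carleman-estimate argument) together with the strong maximum principle in the compact/tip regions. This is where the bulk of the analytic work lies.

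The third step is surjectivity: for every $a$ in the open simplex there is an element of $\mathcal{A}^{k,\circ}$ whose bending matrix is exactly $a$. This is essentially already available from \cite{DH_ovals, DH_blowdown} --- the construction via the ellipsoids $E^{\ell,a}$ with prescribed width ratios produces, in the limit $\ell_i\to\infty$, ancient ovals with the prescribed quadratic bending in the first $k$ axes; I would just need to check that the geometric width ratios at a fixed time translate, under the rescaled-flow normalization, into the bending-matrix entries, and that the limit is genuinely a $k$-oval (compactness is built into the definition \eqref{Ao}). Combining steps two and three: given an arbitrary $k$-oval, compute its normalized bending data $a$, pick the element of $\mathcal{A}^{k,\circ}$ with the same data, and invoke the uniqueness of step two to conclude they coincide up to the allowed symmetries.

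The main obstacle I expect is step two, specifically the passage from ``asymptotic agreement in the cylindrical region'' to ``global coincidence of the two flows.'' The neutral modes are only slowly decaying, so the Merle--Zaag machinery must be run carefully to ensure the quadratic mode genuinely dominates and the remaining neutral/stable modes are lower order; and transferring control from the cylindrical region to the tips requires the noncollapsing hypothesis and careful use of the asymptotics of ancient ovals near the tip (the bowl-soliton-type behavior). Handling the interaction of these three regions --- cylinder, transition, and tip --- uniformly in $\tau$ is the delicate point, and it is precisely here that the ``different spectral parametrization'' advertised in the abstract does the work, replacing the rigid-motion bookkeeping of earlier approaches with the cleaner invariant $Q$.
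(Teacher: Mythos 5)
Your overall architecture (parametrize by quadratic data, prove uniqueness given the data, prove surjectivity of the constructed family, then match) is the same skeleton as the paper's, but the two load-bearing steps are not carried by the tools you cite, and the gaps are exactly where the paper's actual work lies. First, your parametrizing invariant is not available as a ``first step.'' By Definition \ref{def-koval}, the leading-order asymptotics of \emph{every} $k$-oval is the isotropic profile $-\tfrac{\sqrt{2(n-k)}}{4|\tau|}(|\mathbf{y}|^2-2k)$; the anisotropic information distinguishing members of $\mathcal{A}^{k,\circ}$ sits at lower order, and extracting a canonical ``bending matrix at $-\infty$'' from it is precisely the quadratic mode $\mathrm{Q}$ of Bamler--Lai (their Definition 7.7 and Theorem 1.4), a major theorem rather than a routine consequence of the $k$-oval hypothesis. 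The paper deliberately avoids this by working with the spectral projection of the truncated profile at a \emph{fixed} sufficiently negative time $\tau_0$, after a recentering whose existence and continuity are themselves nontrivial (Proposition \ref{prop_orthogonality}, Corollary \ref{prop_continuous_orthogonality}). Relatedly, your uniqueness step asks for ``same asymptotic bending data at $-\infty$ implies coincidence,'' which is strictly stronger than the available spectral uniqueness theorem (Theorem \ref{spectral uniqueness restated}, from \cite{CDZ_ovals}): that theorem needs the neutral \emph{and} positive spectral projections to agree exactly at a fixed time $\tau_0$, and your sketch (Merle--Zaag plus Carleman backward uniqueness) does not explain how to pass from agreement of an asymptotic invariant to exact agreement of finite-time projections; this bridge is the hard injectivity statement in Bamler--Lai, not something the cited machinery delivers off the shelf.

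Second, the surjectivity step is not ``essentially already available from \cite{DH_ovals, DH_blowdown}.'' What \cite{DH_blowdown} provides is prescribed geometric width ratios at time $t=1$; what is needed is an ancient oval, suitably time-shifted and rescaled, whose \emph{spectral} values $\mathscr{E}(\cdot)(\tau_0)$ hit an arbitrary prescribed point near the symmetric value, with the recentering conditions \eqref{quadractic_mode1}--\eqref{positive_mode1} holding. Converting one into the other is the main new content of the paper: it requires the shift map (Propositions \ref{time-shift}, \ref{S_property}), the Jacobian estimates (Proposition \ref{JPhiestimates}), the rescaling monotonicity (Claims \ref{Rescaling monotonicity}, \ref{Rescaling monotonicity for ellipsoidal flow}), a compactness argument for the admissible ellipsoidal parameters, and an $\mathbf{S}_k$-equivariant transversality and inductive degree argument (Theorem \ref{prescribed_eccentricity_restated}), after which the classification follows by feeding $\mathscr{E}(\mathcal{M}^1)$ into this existence theorem and invoking spectral uniqueness. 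In short, your proposal correctly identifies the shape of the argument but treats as routine the two steps (canonical parametrization at $-\infty$, and existence with prescribed spectral value) that constitute the actual mathematical content, so as written it does not yield a proof.
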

Next, let us consider the moduli space of ancient ovals\begin{multline}
\mathcal{X}=\big\{ \mathcal{M} \textrm{ is ancient oval in $\mathbb{R}^{n+1}$}
\textrm{ with blowdown $\mathbb{R}^{k}\times S^{n-k}$} \big\}/\sim,
\end{multline}
where the blowdown is the tangent flow at $-\infty$ and the topology is the one induced by locally smooth convergence, and we mod out by space-time rigid motions (which are actions by $O(n+1)$ and translations in the space-time) and parabolic dilations.

Motivated by \cite{CHH, CHH_translator, CH-classification-r4}, experts in the field expect the following conjecture, which
was recently answered affirmatively by Bamler and Lai.
\begin{conjecture}[nonexistence of exotic ovals, now theorem by \cite{BL1,BL2}]\label{oval-koval}
Any ancient oval in $\mathbb{R}^{n+1}$ whose tangent flow at $-\infty$ is $\mathbb{R}^{k}\times S^{n-k}$ is a $k$-oval.
\end{conjecture}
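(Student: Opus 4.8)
There are two routes. The quick one is \emph{a posteriori}: Bamler and Lai \cite{BL1,BL2} classify \emph{all} ancient asymptotically cylindrical flows in $\R^{n+1}$, so any ancient oval with blowdown $\R^{k}\times S^{n-k}$ occurs on their list, and by inspection every solution on that list has quadratic bending in all $k$ cylindrical directions, hence is a $k$-oval. We devote the rest of this sketch to the more self-contained strategy one would attempt, which isolates the single genuinely hard point.

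The first step is to reduce to the fine asymptotics near the cylindrical end. An ancient noncollapsed solution is convex; after centering $\mathcal{M}$ in space-time and passing to the renormalized flow $\bar M_\tau$, $\tau=-\log(-t)$, the hypothesis gives $\bar M_\tau\to\R^{k}\times S^{n-k}(\sqrt{2(n-k)})$ in $C^\infty_{\mathrm{loc}}$, and on the region where the $\R^{k}$-coordinate $y$ is bounded $\bar M_\tau$ is a graph $u(y,\theta,\tau)$, $\theta\in S^{n-k}$, over the cylinder. The linearization of the equation for $u$ is the Jacobi operator $\mathcal{L}=\Delta_{S^{n-k}}+\partial_y^2-\tfrac12 y\cdot\partial_y+1$, whose spectrum is explicit: the unstable modes are the constant (eigenvalue $1$) together with the coordinates $y_i$ and the first spherical harmonics (eigenvalue $\tfrac12$); the neutral eigenspace is spanned by the Hermite modes $y_iy_j-2\delta_{ij}$ and by $y_i$ times first spherical harmonics (eigenvalue $0$); the rest is stable. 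A Merle--Zaag argument shows that either the unstable or the neutral part of $u$ dominates the others as $\tau\to-\infty$. Centering removes all unstable modes except the dilation mode, and if that one dominates then $u\sim c_0e^{\tau}$ and one concludes as in \cite{ADS18,CDZ_ovals} that $\bar M_\tau$ is a noncompact model (the static cylinder or a bowl), contradicting compactness of $\mathcal{M}$. Hence the neutral part dominates,
\be u(y,\theta,\tau)=c(\tau)\,Q(y)\,\bigl(1+o(1)\bigr)\qquad(\tau\to-\infty), \ee
with $c(\tau)\to 0$ and $Q$ a quadratic form on $\R^{k}$; convexity forces $Q$ to have a sign, so after an orthogonal change of the $y$-coordinates $Q(y)=\sum_{j=1}^{k}a_jy_j^2$ with $a_j\ge 0$, and the ODE for $c(\tau)$ coming from the quadratic self-interaction in the nonlinearity fixes $\sum_j a_j$ to a positive constant. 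By definition $\mathcal{M}$ is a $k$-oval precisely when $a_j>0$ for every $j$.

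The remaining --- and genuinely hard --- step is to exclude $a_{j_0}=0$ for some $j_0$. It is tempting to say this would contradict the blowdown being the \emph{full} cylinder, but it does not: a hypothetical ``exotic'' oval that is flat to second order in the $y_{j_0}$-direction yet closes up compactly, curving there only at a slower rate, would a priori still have blowdown $\R^{k}\times S^{n-k}$, since it still extends to infinity in all $k$ axis directions after renormalization. Excluding such solutions requires a genuinely global input: one must show that within the family of ancient asymptotically cylindrical flows which actually close up compactly there is no member with a degenerate bending direction. The only argument known to do this in arbitrary dimension is the PDE--ODI analysis of Bamler and Lai \cite{BL1,BL2}, which is what we invoke; once $a_j>0$ for every $j$, $\mathcal{M}$ is by definition a $k$-oval.
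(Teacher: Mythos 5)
Your proposal is essentially correct, and it is in the same logical position as the paper itself: the paper does \emph{not} prove this statement independently — it is stated as a conjecture, is a theorem only by virtue of \cite{BL1,BL2}, and the paper's Remark \ref{remark-BLandours} merely outlines the deduction from Bamler--Lai. So invoking \cite{BL1,BL2} at the decisive step is not a gap; it is exactly what the paper does. The difference is in specificity. The paper's remark pins down which Bamler--Lai results carry the load: the quadratic-mode-at-$-\infty$ map $\mathrm{Q}$ restricts to a bijective homeomorphism from $\mathrm{MCF}_{\mathrm{oval}}^{n,k}$ onto nonnegative definite symmetric matrices; if $\mathrm{Q}(\mathcal{M})$ has an $l$-dimensional null space then $\mathcal{M}$ splits off an $\mathbb{R}^l$-factor and is therefore noncompact, so a compact oval must have positive definite $\mathrm{Q}(\mathcal{M})$; and the fine asymptotics of \cite[Proposition 7.1(d)]{BL2} then give quadratic bending in every $\mathbb{R}^k$-direction, i.e.\ the $k$-oval property. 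Your ``route 1'' phrase ``by inspection every solution on that list has quadratic bending'' glosses over precisely this chain, and your ``route 2'' ends with a generic appeal to the ``PDE--ODI analysis'' without naming the mechanism; the actual mechanism excluding a degenerate direction is the splitting/noncompactness dichotomy for degenerate $\mathrm{Q}$, not a direct asymptotic estimate. On the positive side, your Merle--Zaag reduction is a reasonable (standard) preamble consistent with the $k$-oval asymptotics literature the paper cites, and your observation that the naive argument ``a flat-to-second-order direction would contradict the full-cylinder blowdown'' fails is exactly right and is consistent with why the paper routes the exclusion through noncompactness rather than through the blowdown. If you want your sketch to track the intended deduction, replace the vague appeal with the two specific inputs: equivariance/bijectivity of $\mathrm{Q}$ with the splitting statement for degenerate null space, and the fine asymptotics for positive definite $\mathrm{Q}$.
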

Assuming Conjeture \ref{oval-koval}, Theorem \ref{classification_theorem} (classification of $k$-ovals) implies the following theorem on the classification of ancient ovals and the topology of moduli space:
\begin{theorem}[classification of ancient ovals and moduli space]\label{cor_moduli}
Under assuming Conjeture \ref{oval-koval}, all ancient ovals in $\mathbb{R}^{n+1}$ belong to the families of $\mathbb{Z}_2^{k}\times O(n+1-k)$ symmetric ancient ovals for some $k=1, \dots, n-1$ constructed in \cite{DH_ovals}. 
They are $k$-ovals and the moduli space of ancient ovals $\mathcal{X}$ has the following homeomorphism
\begin{equation}\label{modulo homeomorphism0}
     \mathcal{X} \cong \Delta_{k-1} /\mathbf{S}_k, 
 \end{equation}
where \[\Delta_{k-1} = \{(a^1,\ldots,a^k)\in \mathbb{R}^k\,:\,  \sum_{i=1}^k a^i=1,\,  a^i>0\} ,\] which is an open $k-1$ dimensional simplex and $\mathbf{S}_k\subset O(k)$ is the group of permutations among $k$ coordinate axes. 
\end{theorem}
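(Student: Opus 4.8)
The plan is to derive both assertions from Theorem~\ref{classification_theorem} together with Conjecture~\ref{oval-koval}: the classification is then immediate, and for the moduli identification \eqref{modulo homeomorphism0} we parametrize $\mathcal{A}^{k,\circ}$ modulo symmetry by the normalized quadratic bending coefficients appearing in the asymptotics at $-\infty$. For the classification, let $\mathcal{M}$ be an ancient oval with blowdown $\mathbb{R}^k\times S^{n-k}$; by Conjecture~\ref{oval-koval} it is a $k$-oval, so by Theorem~\ref{classification_theorem} it lies, up to a space-time rigid motion and a parabolic dilation, in $\mathcal{A}^{k,\circ}$, hence is one of the $\mathbb{Z}_2^{k}\times O(n+1-k)$-symmetric ancient ovals of \cite{DH_ovals}. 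Since the topological type of the spherical factor $S^{n-k}$ distinguishes different values of $k$, this settles the classification and reduces \eqref{modulo homeomorphism0}, for a fixed $k$, to the statement that $\mathcal{X}=\mathcal{A}^{k,\circ}/\!\sim$ is homeomorphic to $\Delta_{k-1}/\mathbf{S}_k$.

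Next I would introduce the bending--spectrum map. Every $k$-oval has, near its tangent flow $\mathbb{R}^k\times S^{n-k}(\sqrt{2(n-k)})$ at $-\infty$ and in renormalized coordinates $(\by,\tau)$, a graphical profile whose leading correction is a quadratic bending term carrying a tuple $a=(a^1,\dots,a^k)$ of strictly positive coefficients (positivity being precisely the ``quadratic bending in every cylindrical direction'' of \cite[Definition 1.7]{DZ_spectral_quantization}); with the normalization of the blowdown radius used above the $a^j$ are well defined, and with the normalization of the bending coefficients of \cite{DH_ovals} one has $\sum_j a^j=1$, so $a(\mathcal{M})\in\Delta_{k-1}$. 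A space-time rigid motion can only permute the $k$ cylindrical axes and flip their signs (the $O(n+1-k)$ factor fixes them pointwise), while parabolic dilations and time translations leave $a$ unchanged; hence $\mathcal{M}\mapsto[a(\mathcal{M})]$ descends to a map $\beta\colon\mathcal{X}\to\Delta_{k-1}/\mathbf{S}_k$. I would prove $\beta$ continuous by observing that along a locally smoothly convergent sequence of $k$-ovals the quadratic Hermite coefficients at each fixed renormalized time converge, and then using the uniform-over-the-sequence decay of the asymptotic remainder furnished by the $k$-oval structure theory to interchange $\tau\to-\infty$ with the limit.

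Then I would show $\beta$ is a bijection. For surjectivity I would invoke \cite{DH_blowdown}, which produces, for any admissible choice of geometric width ratios in the first $k$ axes, an element of $\mathcal{A}^{k,\circ}$ realizing them at $t=1$; since the width ratios and the normalized bending coefficients are two coordinate systems on the same $(k-1)$-parameter family, related by a homeomorphism, every class in $\Delta_{k-1}/\mathbf{S}_k$ is attained (equivalently, running the construction with $a_i\to a$ in the open simplex yields a limit oval of spectrum $a$). For injectivity, if two $k$-ovals have the same spectrum modulo $\mathbf{S}_k$, then after a rigid motion matching up the cylindrical axes they have identical quadratic bending asymptotics at $-\infty$, and the uniqueness that underlies Theorem~\ref{classification_theorem} --- two $k$-ovals with the same normalized asymptotic bending data agree up to space-time rigid motion and parabolic dilation --- forces them to represent the same point of $\mathcal{X}$.

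Finally, to promote the continuous bijection $\beta$ to a homeomorphism I would verify continuity of $\beta^{-1}=:\Psi$: for $a_i\to a$ in $\Delta_{k-1}$, choose representatives $\mathcal{M}^{(i)}\in\mathcal{A}^{k,\circ}$ with $\beta(\mathcal{M}^{(i)})=[a_i]$, normalized to be extinct at the origin with the entropy value fixed in the construction; since the elements of $\mathcal{A}^{k,\circ}$ are noncollapsed with that entropy, the sequence is precompact in the local smooth topology by the compactness theory for noncollapsed ancient flows, every subsequential limit is again a compact ancient noncollapsed flow of the same entropy --- so a $k$-oval, with no collapse to a smaller $k'$ since $a$ interior keeps every $a^j>0$ --- whose spectrum is $\lim_i[a_i]=[a]$ by continuity of $\beta$, and injectivity of $\beta$ makes this limit independent of the subsequence, so $\mathcal{M}^{(i)}\to\Psi(a)$. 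This yields \eqref{modulo homeomorphism0}. The main obstacle is the injectivity of $\beta$, i.e. that the normalized quadratic bending tuple is a \emph{complete} invariant of a $k$-oval; this is the uniqueness half of Theorem~\ref{classification_theorem} and relies on the spectral parametrization that is the technical core of the paper. A secondary difficulty is the uniform asymptotic control needed both for the continuity of $\beta$ and for ruling out degeneration of the limits in the last step.
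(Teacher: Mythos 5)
Your reduction of the classification statement to Theorem~\ref{classification_theorem} plus Conjecture~\ref{oval-koval} is exactly what the paper does, and is fine. The moduli part, however, has a genuine gap at its very first step: you posit that every $k$-oval carries a well-defined tuple $a(\mathcal{M})\in\Delta_{k-1}$ of ``normalized quadratic bending coefficients at $-\infty$.'' By Definition~\ref{def-koval}, the leading quadratic correction of the renormalized profile is \emph{the same isotropic term} $-\tfrac{\sqrt{2(n-k)}}{4|\tau|}(|\mathbf{y}|^2-2k)$ for every $k$-oval; the data distinguishing different members of $\mathcal{A}^{k,\circ}$ sits in lower-order corrections, and the spectral ratios $\mathcal{E}(\tau_0)(\mathcal{M})$ of any fixed recentered $k$-oval converge to the barycenter $(1/k,\dots,1/k)$ as $\tau_0\to-\infty$. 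So the invariant you want either collapses to a single point or must be extracted as a renormalized limit at $-\infty$, whose existence is precisely the hard canonical-parametrization result of Bamler--Lai (their quadratic mode $\mathrm{Q}$, \cite[Definition 7.7]{BL2}) and is not available from the tools of this paper. Relatedly, your injectivity step conflates ``equal bending data at $-\infty$'' with the hypotheses of Theorem~\ref{spectral uniqueness restated}, which require matching the neutral \emph{and} positive spectral projections at a fixed sufficiently negative time $\tau_0$ after a recentering that makes both flows $\kappa$-quadratic; and your surjectivity step assumes, without proof, that the width ratios of \cite{DH_blowdown} and your asymptotic coefficients are homeomorphic coordinates on the same family --- that realization statement is essentially Theorem~\ref{prescribed_eccentricity_restated}, which is the technical core of the paper (equivariant degree/transversality, rescaling monotonicity, compactness), not a citation-level fact.

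For comparison, the paper deliberately avoids any global invariant at $-\infty$: it first shows $\mathcal{X}\cong\mathcal{A}^{k,\circ}/\mathbf{S}_k$ (Claim~\ref{claim5.1}, using the spectral matrix and spectral uniqueness to control which rigid motions identify elements), then proves that $\mathcal{A}^{k,\circ}$ is a $(k-1)$-dimensional open topological manifold whose charts are the spectral ratio maps $\mathcal{E}(\tau_0)$ at finite times (Claim~\ref{metrizable}, resting on Corollary~\ref{prop_continuous_orthogonality}, Corollary~\ref{prescribed_eccentricity_restated'} and Theorem~\ref{spectral uniqueness restated}), that it is contractible and, for $k\geq 3$, simply connected at infinity (Claims~\ref{claim5.3}--\ref{claim5.4}), and only then invokes the topological classification of contractible open manifolds to get $\mathcal{A}^{k,\circ}\cong\mathrm{Int}(\Delta_{k-1})$ before quotienting by $\mathbf{S}_k$. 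Your strategy of building a single continuous complete invariant valued in $\Delta_{k-1}/\mathbf{S}_k$ is closer in spirit to the Bamler--Lai route and would be clean if such an invariant were in hand, but within this paper's framework the existence, continuity, injectivity and surjectivity of that invariant are exactly the open content, so the proposal as written does not close the argument.
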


The proofs of Theorem \ref{classification_theorem} and Theorem \ref{cor_moduli} will be given in Section \ref{sec-classification and moduli}.

Let us briefly explain our method and main challenge compared to the previous classification and the theorem on the topology of moduli space for $2$-ovals considered in \cite{CDDHS} or $1$-oval in \cite{ADS18}. Our proof of classification of $k$-ovals and moduli space is based on a different parametrization and the  argument is motivated from \cite{CDDHS, CDZ_ovals}, the spectral map $\mathcal{E}(\mathcal{M})$ defined by
\begin{equation} \label{eq-Eintro}
    \mathcal{E}(\mathcal{M})=\left(\frac{\left\langle v_{\cC}^{\mathcal{M}}\left(\tau_0\right), y_1^2-2\right\rangle_{\mathcal{H}}}{\left\langle v_{\cC}^{\mathcal{M}}\left(\tau_0\right),|\mathbf{y}|^2-2 k\right\rangle_{\mathcal{H}}}, \ldots, \frac{\left\langle v_{\cC}^{\mathcal{M}}\left(\tau_0\right), y_k^2-2\right\rangle_{\mathcal{H}}}{\left\langle v_{\cC}^{\mathcal{M}}\left(\tau_0\right),|\mathbf{y}|^2-2 k\right\rangle_{\mathcal{H}}}\right),
\end{equation}
where $\tau_0$ is a sufficiently negative time, $v^{\mathcal{M}}_{\mathcal{C}}$ is  suitable truncated profile function of the $k$-oval $\mathcal{M}$ obtained by suitably time-shift and parabolically dilation such that certain recentering conditions hold (see \eqref{quadractic_mode1} and \eqref{positive_mode1} for details). 
The major challenge for classification is that it is difficult to find global continuous recentering parameters\footnote{When $k=2$, spectral map is one dimensional and as in \cite{CDDHS}   the local continuous recentering condition, intermediate value theorem and the classification of one dimensional manifolds are enough for classification and moduli space characterization, which only works for one-parameter family of $2$-ovals. Thus we need a new approach dealing with the classification of higher parameter family of $k$-ovals.} of ancient solutions
by simultaneous time-shift and parabolic dilation. To settle this, we add parabolic dilation parameter and correspondingly extend $\mathcal{A}^{k, \circ}$ to $\mathscr{A}^{k,\circ}$, which is one dimension higher than $\mathcal{A}^{k, \circ}$ (see \eqref{mathscrA} for precise definition). Then we show the quadratic bending spectral mode is monotone along parabolic scaling of $O(k)\times O(n+1-k)$
symmetric oval, which means the function
\begin{equation}
    f(\gamma)=\left\langle v_{\cC}^{\mathcal{M}^{\beta(\gamma),\gamma}}\left(\tau_0\right), y_1^2-2\right\rangle_{\mathcal{H}}
\end{equation}
is monotone, where $\mathcal{M}^{\beta(\gamma), \gamma}$ is the flow obtained by $e^{\gamma/2}$-parabolic-dilation and $\beta(\gamma)$-time-shift. Here $\beta(\tau)$ is introduced to satisfy \eqref{quadractic_mode1} and \eqref{positive_mode1} and we show $\beta(\gamma)$ is a continuous function of $\gamma$ (see Claim \ref{Rescaling monotonicity} for details). Then we inductively apply the mapping degree argument
for spectral map
\begin{align}
\mathscr{E}(\mathcal{M})=\left({\Big\langle v^{\mathcal{M}}_{\cC}(\tau_{0}),  2-|{\bf{y}}|^2_{1} \Big\rangle_\cH},\dots, {\Big\langle v^{\mathcal{M}}_{\cC}(\tau_{0}),  2-|{\bf{y}}|^2_{k} \Big\rangle_\cH}\right),
\end{align}
on space of time-shifted $\mathscr{A}^{k,\circ}$ and spectral uniqueness theorem previously shown in \cite[Theorem 1.3]{CDZ_ovals} to show it is a homeomorphism. Then we can apply this  to find simultaneous continuous time-shift parameter $\beta(\mathcal{M})$ and parabolic dilation parameter $\gamma(\mathcal{M})$ and obtain the coordinates map $\mathcal{E}(\mathcal{M}^{\beta(\mathcal{M}), \gamma(\mathcal{M})})$ of moduli space of $k$-ovals (see Corollary \ref{prescribed_eccentricity_restated'} for details).  Our argument is also expected effective for classifying noncollapsed translators of mean curvature flows and classifying ancient $\kappa$-noncollapsed   Ricci flows.

\begin{remark} \label{remark-BLandours} Here we point out and outline how Conjecture \ref{oval-koval} and Theorem \ref{cor_moduli} follow from \cite{BL1,BL2}.

According to \cite{BL1}, the ancient solutions that are asymptotic to $\mathbb{R}^k\times \mathbb{S}^{n-k}$, become extinct at the space--time origin, and are not self-similarly translating are denoted by $\mathrm{MCF}_{\mathrm{oval}}^{n,k}$; see Theorem \cite[Theorem 1.3]{BL1} and the definition of $\mathrm{MCF}_{\mathrm{oval}}^{n,k}$. The $\mathbb{Z}^k_2\times O(n+1-k)$ symmetry of ancient ovals is described in the definition of $\mathrm{MCF}_{\mathrm{oval}}^{n,k}$ and \cite[(b) in Theorem 1.4]{BL1}. Bamler--Lai defined a canonical map, called the \emph{quadratic mode at $-\infty$}
\cite[Definition~7.7]{BL2}, for asymptotically $\mathbb{R}^k \times \mathbb{S}^{n-k}$ ancient solutions
that become extinct at the space--time origin. This map yields a novel global parametrization of
$\mathrm{MCF}_{\mathrm{oval}}^{n,k}$. More precisely, the restriction
\[
\textrm{Q}\big|_{\mathrm{MCF}_{\mathrm{oval}}^{n,k}}:\mathrm{MCF}_{\mathrm{oval}}^{n,k} \to \mathbb{R}^{k\times k}_{\ge 0}
\]
is a bijective homeomorphism \cite[(a) in Theorem~1.4]{BL1}.\footnote{
In our previous approach, our spectral map $\mathcal{E}$ in \eqref{eq-Eintro} was defined via the spectral projection of truncated profile onto the
quadratic modes at a sufficiently negative time, for solutions that are sufficiently close at that time
to an $O(k)\times O(n+1-k)$-symmetric ancient oval. The map $\textrm{Q}$ can be viewed as an extension of $\mathcal{E}$
to all ancient solutions, in a canonical way that respects the ambient space-time symmetries.}
Here $\mathbb{R}^{k\times k}_{\ge 0}$ denotes the space of symmetric nonnegative definite matrices.  Moreover, if $\textrm{Q}(\mathcal{M})$ has an $l$-dimensional null space, then $\mathcal{M}\in \mathrm{MCF}_{\mathrm{oval}}^{n,k}$ splits as a product of a flow $\mathcal{M}'\in \mathrm{MCF}_{\mathrm{oval}}^{n-l,k-l}$ and an $\mathbb{R}^l$-factor in the null-space directions. Those $\mathcal{M}\in \mathrm{MCF}_{\mathrm{oval}}^{n,k}$ with positive definite $\textrm{Q}(\mathcal{M})$ correspond to the space of ancient ovals. Indeed, by the fine asymptotics \cite[(d) in Proposition 7.1]{BL2}, if $\mathcal{M}\in \mathrm{MCF}_{\mathrm{oval}}^{n,k}$ has positive definite $\textrm{Q}(\mathcal{M})$, then $\mathcal{M}$ exhibits quadratic bending in all directions in $\mathbb{R}^k$. Since solutions are noncollapsed and convex \cite[Theorem 1.2]{BL1}, this implies that $\mathcal{M}\in \mathrm{MCF}_{\mathrm{oval}}^{n,k}$ with positive definite $\textrm{Q}(\mathcal{M})$ is necessarily a $k$-oval. Conjecture \ref{oval-koval} (nonexistence of exotic ovals) holds since all other $\mathcal{M}\in \mathrm{MCF}_{\mathrm{oval}}^{n,k}$ with degenerate $\textrm{Q}(\mathcal{M})$ are non-compact solutions.

The topology of the moduli space is also a consequence of \cite[Theorem 1.4]{BL1}, together with other results in the papers. Since parabolic rescaling acts equivariantly on $\textrm{Q}(\cdot)$ \cite[Proposition 7.8]{BL2}, we may rescale and assume $\mathrm{tr}(\textrm{Q}(\mathcal{M}))=1$. If we act on $\mathcal{M}$ by an orthogonal transformation in $O(k)$ (in the $\mathbb{R}^{k}$-factor), then $\textrm{Q}(\mathcal{M})$ changes by conjugation with the corresponding orthogonal matrix \cite[Proposition 7.8]{BL2}. Thus, after a rigid motion (orthogonal transformation) in space, we may assume that $\textrm{Q}(\mathcal{M})$ is diagonal, say $\textrm{Q}(\mathcal{M})=\mathrm{diag}(a^1,\ldots, a^k)$ with $a^i>0$ and $a^1+\cdots+a^k=1$. Since we can permute the eigenvalues by another orthogonal transformation, it follows that the topology of $\mathrm{MCF}_{\mathrm{oval}}^{n,k}$ modulo the action of $O(k)$ and parabolic dilations is $\Delta_{k-1}/\mathbf{S}_k$. This proves \eqref{modulo homeomorphism0}.

\end{remark}

\textbf{Acknowledgments.}
The first author has been supported by the National Research Foundation of Korea (NRF) grant funded by the Korean government (MSIT)  NRF-2022R1C1C1013511, RS-2023-00219980 and by Samsung Science \& Technology Foundation grant SSTF-BA2302-02.
The second author has been supported by Massachusetts Institute of Technology. The third author has been supported by Institute for Theoretical Sciences at Westlake University.  The authors
appreciate the communication with Professor Panagiota Daskalopoulos,
Professor Robert Haslhofer and Professor Natasa Sesum.

\section{Preliminaries}
 In this section, we recall some definitions and results which will be used in later sections. 

 For $1\le k\le n-1$, let $\mathcal{M}=\{M_t\}$ be an ancient noncollapsed flow having the tangent flow $\mathbb{R}^k\times S^{n-k}(\sqrt{2(n-k)|t|})$ at $-\infty$. This means 
\begin{equation}\label{bubble-sheet_tangent_intro}
\lim_{\tau\to -\infty}\bar{M}_\tau=\lim_{\tau\to -\infty}e^{\frac{\tau}{2}} M_{-e^{-\tau}}=\mathbb{R}^{k}\times S^{n-k}(\sqrt{2(n-k)}).
\end{equation} 
We express the renormalized flow $\bar M_\tau$ by the profile function \( v(\mathbf{y}, \vartheta,\tau) \) defined on the domains $(\mathbf{y},\vartheta) \in \Gamma_\tau$ that exhaust the cylinder $\mathbb{R}^{k}\times S^{n-k}$ via
\begin{equation}\label{profile v def}
    \bar{M}_\tau =\{(\mathbf{y},v(\mathbf{y},\vartheta,\tau)\vartheta )\in \mathbb{R}^k \times \mathbb{R}^{n+1-k}\,:\, (\mathbf{y},\vartheta)\in \Gamma_\tau  \}.
\end{equation}
The asymptotics \eqref{bubble-sheet_tangent_intro} reads to the smooth local convergence of $v(\mathbf{y},\vartheta,\tau)$ to $\sqrt{2(n-k)}$ as $\tau$ approaches $-\infty$. The $k$-oval is defined in terms of the next-order asymptotics which describes $v(\tau)$ up to an error of order $o(|\tau|^{-1})$. 

\begin{definition}[$k$-oval, c.f.{\cite[Definition 1.7]{DZ_spectral_quantization}}]\label{def-koval} An ancient noncollapsed flow $\mathcal{M}$ in $\mathbb{R}^{n+1}$ is a $k$-oval  if it has the tangent flow $\mathbb{R}^{k}\times S^{n-k}(\sqrt{2(n-k)|t|})$ at $-\infty$ and the renormalized profile $v(\mathbf{y},\vartheta,\tau)$ in \eqref{profile v def} has the asymptotics 
\[\lim_{\tau\to -\infty} \bigg \Vert |\tau|(v(\mathbf{y},\vartheta,\tau) -\sqrt{2(n-k)})+ \frac{\sqrt{2(n-k)}}{4}(|\mathbf{y}|^2-2k)\bigg \Vert_{C^m(\{|\mathbf{y}|<R\})} =0,\] for all $m\in \mathbb{N}$ and $0<R<+\infty$.
\end{definition}

We consider Gaussian weighted $L^2$-space $\mathcal{H}$ and it admits a spectral decomposition 
\begin{equation}
\mathcal{H}=L^2\big(\mathbb{R}^k,e^{-\frac{|{\bf{y}}|^2}{4}}d{\bf{y}})= \mathcal{H}_+\oplus \mathcal{H}_0\oplus \mathcal{H}_-,
\end{equation}
with respect to Ornstein-Ulenbeck operator $\mathcal{L}=\Delta_{\bf{y}}-\frac{1}{2}{\bf{y}}\cdot  \nabla_{\bf{y}}+1$,
where the positive eigenspace and neutral eigenspace are explicitly given by
\begin{equation}\begin{aligned}\label{H0+}
 &   \mathcal{H}_+=\textrm{span}\big\{1, y_1,\dots, y_k\}, \\ 
&\mathcal{H}_0=\textrm{span}\big\{y_1^2-2, \dots, y_k^2-2, y_{i}y_{j}, 1\leq i<j\leq k\big\}.\end{aligned}
\end{equation}
Let \(\mathfrak{p}_{0}\) and \(\mathfrak{p}_{\pm}\) be the orthogonal projections onto \(\mathcal{H}_0\) and \(\mathcal{H}_\pm\), respectively.

Then we discuss $\kappa$-quadraticity and strong $\kappa$-quadraticity.
\begin{definition}[$\kappa$-quadraticity at $\tau_0$]\label{k_tau00_intro}
A $k$-oval in $\mathbb{R}^{n+1}$ 
is called \emph{$\kappa$-quadratic at time $\tau_{0}$}, where $\kappa>0$ and $\tau_0<0$, if its truncated renormalized profile function $v_{\cC}$ satisfies 
\begin{equation}\label{condition1intro}
    \left\| v_{\cC}(\mathbf{y}, \tau_{0})-\sqrt{2(n-k)}+\frac{\sqrt{2(n-k)}}{4|\tau_{0}|}(|\mathbf{y}|^2-2k)\right\|_{\mathcal{H}}\leq \frac{\kappa}{|\tau_{0}|},
\end{equation}
and moreover graphical radius condition hold:
\begin{equation}\label{condition3}
        \sup_{\tau\in [2\tau_{0}, \tau_{0}]} |\tau|^{\frac{1}{50}} \|v(\cdot ,\tau)-\sqrt{2(n-k)}\|_{C^{4}(B(0, 2|\tau|^{{1}/{100}}))}\leq 1.
\end{equation}
\end{definition}
\begin{definition}[{strong  $\kappa$-quadraticity, c.f. \cite[Definition 3.7]{CHH_translator}}]\label{strong}
We say that an ancient noncollapsed flow $\mathcal{M}$ in $\mathbb{R}^{n+1}$ (with coordinates and tangent flow $-\infty$ as above) is \emph{strongly $\kappa$-quadratic from time $\tau_{0}$},  if 
\begin{enumerate}[(i)]
\item $\rho(\tau)=|\tau|^{1/10}$ is an admissible graphical radius for $\tau\leq \tau_{0}$, and
\item  the truncated graphical function $\hat{u}(\cdot,  \tau)=u(\cdot,  \tau)\chi\left(\frac{|\cdot|}{\rho(\tau)}\right)$ satisfies 
\begin{equation}
    \left\| \hat{u}({\bf{y}},  \tau)+\frac{\sqrt{2(n-k)}}{4|\tau|}{(|{\bf{y}}|^2-2k)} \right\|_{\mathcal{H}}\leq \frac{\kappa}{|\tau|}\quad \text{for}\,\,\tau\leq \tau_{0}.
\end{equation}
\end{enumerate}
\end{definition}
The relation between $\kappa$-quadraticity and strong $\kappa$-quadraticity is stated below.
\begin{theorem}[$\kappa$-quadraticity at one time  
 implies strong $\kappa$-quadraticity {c.f. \cite[Theorem 2.4]{CDZ_ovals}}]\label{point_strong}
    For every $\kappa>0$, there exist $\kappa'>0$ and $\tau_{*}>-\infty$, such that if a $k$-oval in $\mathbb{R}^{n+1}$ is $ \kappa'$-quadratic at some time $\tau_{0}\leq \tau_{*}$, then it is strongly $\kappa$-quadratic from time $\tau_{0}$.
\end{theorem}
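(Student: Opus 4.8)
\textbf{Proof proposal for Theorem \ref{point_strong} ($\kappa$-quadraticity implies strong $\kappa$-quadraticity).}

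The plan is to bootstrap the single-time quadratic control in Definition \ref{k_tau00_intro} backward in time by combining the parabolic smoothing/interior estimates for the renormalized mean curvature flow equation with the spectral structure of the Ornstein--Uhlenbeck operator $\mathcal{L}$. First I would recall that the truncated graphical function $\hat u(\cdot,\tau)$ (the error $v-\sqrt{2(n-k)}$ cut off at the graphical scale) satisfies, inside the graphical region, an equation of the form $\partial_\tau \hat u = \mathcal{L}\hat u + N(\hat u) + \text{(cutoff error)}$, where $N$ is quadratically small in $\hat u$ and its derivatives and the cutoff error is supported near $|\mathbf{y}|\sim\rho(\tau)$ and is exponentially small in the Gaussian norm. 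The hypothesis \eqref{condition1intro} at $\tau_0$ gives that $\hat u(\cdot,\tau_0) + \frac{\sqrt{2(n-k)}}{4|\tau_0|}(|\mathbf{y}|^2-2k)$ is $\kappa'/|\tau_0|$-small in $\mathcal{H}$, i.e.\ $\hat u(\tau_0)$ is, to leading order, the explicit neutral-mode function $-\frac{\sqrt{2(n-k)}}{4|\tau|}(|\mathbf{y}|^2-2k)$, which is an (approximate) solution of the linearized-with-forcing problem because $\mathcal{L}(|\mathbf{y}|^2-2k)=0$ and the $1/|\tau|$ prefactor accounts exactly for the $\partial_\tau$ term at this order.

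The key steps, in order: (1) upgrade \eqref{condition3} from the interval $[2\tau_0,\tau_0]$ to all $\tau\le\tau_0$ — i.e.\ establish that $\rho(\tau)=|\tau|^{1/10}$ is an admissible graphical radius for $\tau\le\tau_0$; this is the noncollapsing plus convexity plus the known fact that a $k$-oval's tangent flow at $-\infty$ is exactly the cylinder, pushed quantitatively via a continuity/contradiction argument on the time where graphicality at scale $|\tau|^{1/10}$ first fails (if it fails at some $\tau_1<\tau_0$, one rescales around $\tau_1$, extracts a limit which must be a cylinder by the blow-down hypothesis, and derives a contradiction with the failure of graphicality — this is the standard argument from ADS/DH). (2) With graphicality in hand, run the backward energy/ODE estimate: set $e(\tau):=\big\Vert \hat u(\cdot,\tau)+\frac{\sqrt{2(n-k)}}{4|\tau|}(|\mathbf{y}|^2-2k)\big\Vert_{\mathcal{H}}$ and show $e(\tau)|\tau|$ satisfies a differential inequality of Gronwall type whose driving terms are $O(e^2|\tau|)$ (from $N$) plus $O(|\tau|^{-10})$-type terms (from the cutoff), using that the neutral mode is fixed and the positive/negative modes of $\hat u$ decay. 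Because $\mathcal{L}$ has a spectral gap away from the finite-dimensional neutral space, the component of $\hat u - (\text{neutral model})$ in $\mathcal{H}_+\oplus\mathcal{H}_-$ contracts, and the neutral component is controlled by the algebraic evolution of the coefficients of $1,y_i,y_j,y_iy_j,y_i^2-2$, which (by the recentering built into the definition of $k$-oval and standard normalizations) stay $o(1/|\tau|)$. Integrating backward from $\tau_0$ gives $e(\tau)\le \kappa/|\tau|$ for all $\tau\le\tau_0$ provided $\kappa'$ was chosen small and $\tau_0\le\tau_*$ sufficiently negative, which is exactly (ii) in Definition \ref{strong}. (3) Finally translate between the two slightly different truncations ($v_{\cC}$ cut at scale $2|\tau|^{1/100}$ in Definition \ref{k_tau00_intro} versus $\hat u$ cut at scale $|\tau|^{1/10}$ in Definition \ref{strong}): the discrepancy between the two cutoffs is supported where $v-\sqrt{2(n-k)}$ is already controlled by the graphical radius estimate and contributes only a Gaussian-exponentially-small term, so the two $\mathcal{H}$-norms differ by $o(1/|\tau|)$.

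The main obstacle I expect is step (1)–(2) coupling: one needs the backward-in-time estimate to close without assuming a priori the graphical radius $|\tau|^{1/10}$ on the whole range, yet the energy estimate in step (2) presupposes graphicality, so the two must be run simultaneously as a single continuity argument on a maximal interval $(-\infty,\tau_0]$ (or $[\sigma,\tau_0]$ with $\sigma\to-\infty$), closing both the graphical-radius bootstrap and the $\mathcal{H}$-norm bound strictly better than the hypotheses on that interval. Making the constants uniform — i.e.\ extracting $\kappa'$ and $\tau_*$ depending only on $\kappa$, $n$, $k$ and not on the particular $k$-oval — requires the compactness of the space of $k$-ovals normalized at scale $|\tau_0|$, which in turn rests on the noncollapsing and the fixed tangent flow; this is where one invokes (a quantitative form of) the asymptotics in Definition \ref{def-koval} together with the local regularity theory. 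Once the continuity argument is set up correctly, the remaining computations (the precise form of $N$, the spectral-gap contraction estimate, the cutoff-error bounds) are routine and parallel to \cite{CHH_translator} and \cite{CDZ_ovals}.
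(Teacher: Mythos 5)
This paper does not actually prove Theorem \ref{point_strong}: it is recalled verbatim in the Preliminaries from \cite[Theorem 2.4]{CDZ_ovals}, so there is no in-paper proof to compare against; the comparison has to be with the argument in that reference (which follows the scheme of \cite{CHH_translator}). Measured against that, your outline has the right ingredients (graphical-radius bootstrap, spectral decomposition for $\mathcal{L}$, cutoff-error bookkeeping, a coupled continuity argument), but two central points are wrong or glossed in a way that would prevent the proof from closing. First, the claim that the spectral gap makes the $\mathcal{H}_+\oplus\mathcal{H}_-$ component ``contract'' when you integrate backward from $\tau_0$ is false for the $\mathcal{H}_-$ part: modes with negative eigenvalue decay forward in time, hence grow under a naive backward Gronwall estimate, so backward propagation from the single time $\tau_0$ cannot control them. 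The actual mechanism is the Merle--Zaag ODE lemma applied on $(-\infty,\tau_0]$, which exploits ancientness (the solution converges to the cylinder as $\tau\to-\infty$, so the $\pm$ modes are a priori slaved to the neutral mode going forward from $-\infty$); the single-time $\kappa'$-quadraticity is then used to select the neutral-dominated branch and to normalize it, not as initial data for a backward parabolic estimate.

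Second, the statement that the neutral coefficients ``stay $o(1/|\tau|)$ by the recentering built into the definition of $k$-oval'' misses the heart of the theorem: the dominant neutral coefficient (of $|\mathbf{y}|^2-2k$) is exactly of size $\tfrac{\sqrt{2(n-k)}}{4|\tau|}$, and what must be shown is that its deviation from this value, together with the $y_iy_j$ and diagonal-difference coefficients, remains $\leq \kappa/|\tau|$ on all of $\tau\leq\tau_0$. This is obtained from the (Riccati-type) evolution of the neutral-mode coefficients driven by the quadratic nonlinearity, whose ``pole'' is pinned by the hypothesis at $\tau_0$; it is the quantitatively delicate step, not routine bookkeeping, and it is also what produces constants $\kappa',\tau_*$ depending only on $\kappa,n,k$. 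Relatedly, your appeal to ``compactness of the space of $k$-ovals'' for uniformity is not how the cited proof works and is itself problematic (the class of $k$-ovals is not closed under the relevant convergence, and the putative failure time can drift to $-\infty$ relative to $\tau_0$, destroying the normalization in the limit); uniformity comes from the quantitative ODE/graphical-radius argument, with the nonuniform sharp asymptotics of Definition \ref{def-koval} entering only qualitatively to rule out the positive-mode-dominated Merle--Zaag alternative.
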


The nonuniform sharp asymptotics in \cite[Theorem 1.8]{DZ_spectral_quantization} implies that, given any $\kappa>0$, every $k$-oval $\mathcal{M}$ in $\mathbb{R}^{n+1}$ is, after suitable recentering,\footnote{For the detailed recentering argument, please see Proposition \ref{prop_orthogonality} in Section \ref{classification}.} $\kappa$-quadratic at some $\tau_0=\tau_0(\mathcal{M},\kappa)\ll 0$. With above preparation, we state  spectral uniqueness theorem below, which is an important ingredient for classification.

\begin{theorem}[spectral uniqueness {c.f. \cite[Theorem 1.3]{CDZ_ovals}}]\label{spectral uniqueness restated} There exist $\kappa>0$ and $\tau_{\ast}>-\infty$ with the following significance:
If $\mathcal{M}^1$ and $\mathcal{M}^2$ are $k$-ovals in $\mathbb{R}^{n+1}$ that are $\kappa$-quadratic at time $\tau_0$, where $\tau_0 \leq \tau_{\ast}$, and if their trancated cylindrical profile functions $v^1_{\cC}$ and $v^2_{\cC}$ satisfy
\begin{equation}\label{spec_ecc_intro}
\fp_{0}(v^1_{\cC}(\tau_0))=\fp_{0}(v^2_{\cC}(\tau_0)),
\end{equation}
and
\begin{equation}
      \mathfrak{p}_{+}v^{1}_{\cC}(\tau_{0})=  \mathfrak{p}_{+}v^{2}_{\cC}(\tau_{0}),
    \end{equation}
then
\begin{equation}
\mathcal{M}^1=\mathcal{M}^2.
\end{equation}
\end{theorem}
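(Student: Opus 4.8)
\textbf{Proof proposal for Theorem~\ref{spectral uniqueness restated} (spectral uniqueness).}

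The plan is to run a contraction/Merle--Zaag-type argument comparing the two solutions in the Gaussian-weighted $\mathcal{H}$-norm at the cylindrical scale. First I would pass from $\kappa$-quadraticity at the single time $\tau_0$ to strong $\kappa$-quadraticity from $\tau_0$ using Theorem~\ref{point_strong}; this upgrades the smallness at one time to uniform quadratic control of the truncated graphical functions $\hat u^1(\cdot,\tau)$, $\hat u^2(\cdot,\tau)$ for all $\tau\le\tau_0$, with graphical radius $\rho(\tau)=|\tau|^{1/10}$. Then I would set $w(\cdot,\tau):=\hat u^1(\cdot,\tau)-\hat u^2(\cdot,\tau)$ (more precisely the difference of the truncated cylindrical profiles $v^1_{\cC}-v^2_{\cC}$, which agrees with $\hat u^1-\hat u^2$ up to a harmless rescaling factor), and derive the linear-plus-error evolution equation $\partial_\tau w = \mathcal{L} w + \mathcal{E}$, where $\mathcal{L}=\Delta_{\mathbf y}-\tfrac12\mathbf y\cdot\nabla_{\mathbf y}+1$ acts on the $\mathbb{R}^k$-variables (plus the $S^{n-k}$-Laplacian piece, controlled since $w$ is even in $\vartheta$ at leading order) and $\mathcal{E}$ is a quadratically small error: $|\mathcal{E}|\lesssim |\tau|^{-1}\big(|w|+|\nabla w|+|\nabla^2 w|\big)$, coming from the difference of the nonlinearities and the difference of the truncation cutoffs. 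The hypothesis gives the crucial initial condition: at $\tau=\tau_0$ the projections onto $\mathcal{H}_+$ and $\mathcal{H}_0$ of $w$ vanish, so $w(\tau_0)\in\mathcal{H}_-$.

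The heart of the argument is then a Merle--Zaag ODE-inequality analysis for the three quantities $w_+(\tau)=\|\mathfrak p_+ w(\tau)\|_{\mathcal H}$, $w_0(\tau)=\|\mathfrak p_0 w(\tau)\|_{\mathcal H}$, $w_-(\tau)=\|\mathfrak p_- w(\tau)\|_{\mathcal H}$. Differentiating and using that $\mathcal{L}$ has eigenvalue $\ge\tfrac12$ on $\mathcal{H}_+$, eigenvalue $0$ on $\mathcal{H}_0$, and eigenvalue $\le-\tfrac12$ on $\mathcal{H}_-$, together with the error bound $|\langle\mathcal E,\cdot\rangle|\le \varepsilon(\tau)(w_++w_0+w_-)$ where $\varepsilon(\tau)\to0$, one obtains the standard system of differential inequalities. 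Feeding in the boundary condition $w_+(\tau_0)=w_0(\tau_0)=0$ and the decay $w_\pm,w_0\to0$ as $\tau\to-\infty$ (a consequence of strong $\kappa$-quadraticity applied to each solution and the triangle inequality), the Merle--Zaag lemma forces either the neutral mode dominates ($w_0\ge w_++w_-$ eventually) or $w\equiv 0$. In the neutral-dominant case I would derive the scalar ODE $\frac{d}{d\tau}w_0 = o(1)\,w_0$ whose solution, combined with $w_0(\tau_0)=0$, gives $w_0\equiv0$ on $(-\infty,\tau_0]$; then the inequalities for $w_\pm$ close up to give $w\equiv0$ as well. Hence $\hat u^1\equiv\hat u^2$ on the graphical region for all $\tau\le\tau_0$.

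Finally, from $\hat u^1=\hat u^2$ on the region $\{|\mathbf y|\le\rho(\tau)\}$ for all $\tau\le\tau_0$ I would conclude $M^1_t=M^2_t$ globally. One way: backward uniqueness / unique continuation for the (uniformly parabolic, on the smooth graphical part) mean curvature flow equation, or more elementarily, the two flows agree on an open set at a sequence of times $\tau\to-\infty$ and then, being ancient noncollapsed solutions with the same tangent flow, the strong maximum principle for the distance of the two hypersurfaces (Ilmanen's/White's maximum principle for level-set flow, or direct comparison since both are smooth compact flows after the extinction normalization) propagates the coincidence forward in $\tau$; then rescale back to get $\mathcal{M}^1=\mathcal{M}^2$. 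The main obstacle I anticipate is the bookkeeping of the error term $\mathcal{E}$: one must show its $\mathcal{H}$-pairings against each eigenspace are genuinely $o(1)$ as $\tau\to-\infty$ and not merely $O(1)$, which requires carefully using strong $\kappa$-quadraticity to control $|w|+|\nabla w|+|\nabla^2 w|$ pointwise on the (expanding) graphical region, handling the cutoff-derivative terms supported near $|\mathbf y|\sim\rho(\tau)$ via the exponential Gaussian weight, and absorbing the $S^{n-k}$-rotational modes (which sit at positive eigenvalue and are therefore harmless but must be separated out cleanly). This is exactly the technical core already carried out in \cite{CDZ_ovals}, so here I would cite Theorem~\ref{spectral uniqueness restated} as stated rather than reproduce it.
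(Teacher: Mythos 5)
The paper itself does not prove this theorem: it is imported as a preliminary, cited directly from \cite[Theorem 1.3]{CDZ_ovals}, so your final decision to cite rather than reproduce the argument is exactly what the paper does. One caution on your sketch, should you ever try to flesh it out: a Merle--Zaag analysis of the truncated cylindrical difference alone does not close, because the two compact flows also differ in the tip regions where the graphical profile degenerates, so the error pairings are only genuinely absorbable after the weighted energy/coercivity estimates in both the cylindrical and tip regions carried out in \cite{CDZ_ovals}, and the vanishing of $\mathfrak{p}_0$ and $\mathfrak{p}_+$ of the difference at the single time $\tau_0$ is converted into $w\equiv 0$ by a quantitative contraction estimate rather than by the qualitative Merle--Zaag dichotomy by itself.
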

Below, we also reformulate the construction of class of ancient ovals \cite{DH_ovals} including the scaling effect in details. Specifically, to construct this family, we say $a=(a^{1}, \dots, a^{k})\in \Delta_\mu^{k-1}$ if $a^j\geq 0$ and $
a^1 + a^2 + \cdots+ a^k = \mu\in (0, \infty)$.
Then for any $a=(a^{1}, \dots, a^{k})\in \Delta_\mu^{k-1}$ and any $\ell<\infty$ one considers the ellipsoid
\begin{equation}
E^{\ell,a}_\mu:=\left\{x\in\mathbb{R}^{n+1}: \sum_{j=1}^{k}\frac{(a^{j})^2}{\ell^2}x_j^2 + \sum_{j=k+1}^{n+1}\mu^2x_j^2 = 2(n-k) \right\}\, .
\end{equation}
One then chooses time-shifts $t_\mu^{\ell,a}$ and dilation factors $\lambda_\mu^{\ell,a}$ so that the flow
\begin{equation}\label{m_ell_a}
M^{\ell,a}_{\mu,t} := \lambda_\mu^{\ell,a} \cdot E^{\ell,a}_{\mu, {(\lambda_\mu^{\ell,a}})^{-2} t+t_\mu^{\ell,a}}
\end{equation}
becomes extinct at time $0$ and satisfies 
\begin{equation}\label{bubblesheetcond}
\int_{M_{\mu,-\mu^{-2}}^{\ell,a}}\frac{1}{(4\pi)^{n/2}}e^{-\frac{|x|^2}{4}}=\frac{1}{2}(\text{Ent}(S^{n-k}\times \mathbb{R}^{k})+\text{Ent}(S^{n-k+1}\times \mathbb{R}^{k-1})).
\end{equation}

For each $a_i=(a_i^j)_{j=1}^k$ with $\mu=\sum_{j=1}^k a_i^j$ and $\ell_i\in (0,\infty)$ we define the corresponding ellipsoidal flow $\mathcal{M}_{a_i, \ell_i}:=\{{M}^{a_i, \ell_i}_{\mu, t}\}$.

Then we define
\begin{align}\label{mathscrA}
\mathscr{A}^{k, \circ}&= \bigcup_{\mu\in \mathbb{R}_{+}}\mathcal{A}^{k, \circ}_\mu,
\end{align}
where $\mathcal{A}^{k, \circ}_{\mu}$ is the set of all possible limits $\lim_{i\to \infty} M^{\ell_i,a_i}_{\mu, t}$ for which the limit exists and is compact, with $\ell_i\to \infty$ and $a_i\ge 0$ satisfying $a^1_i+\dots +a^k_i=\mu$ for a fixed constant $\mu>0$. On the other hand, from the construction above, we know that $\mathcal{A}^{k, \circ}_{\mu}$ is obtained by parabolically rescaling $\mathcal{A}^{k, \circ}=\mathcal{A}^{k, \circ}_{1}$.
Let us denote $\mathcal{D}_{\gamma}$ by $e^{\frac{\gamma}{2}}$-parabolic rescaling. By the construction above, we can also represent
\begin{align}
\mathscr{A}^{k, \circ}&=\bigcup_{\mu\in \mathbb{R}_{+}}\mathcal{A}^{k, \circ}_\mu= \bigcup_{\gamma\in \mathbb{R}}\mathcal{D}_{\gamma}(\mathcal{A}^{k, \circ}_1).
\end{align}
Note that the transformation $\mu=e^{\frac{\gamma}{2}}$ gives a homeomorphism between the two spaces $\mathcal{A}^{k,\circ}_{\mu}$ and $\mathcal{A}^{k,\circ}_{1}$, where the topology on these spaces is the  local smooth convergence. It is also convenient to define $\mathcal{A}^{k}_{\mu}$ and $\mathscr{A}^k$ as closure of $\mathcal{A}^{k, \circ}_{\mu}$ and $\mathscr{A}^{k, \circ}$ respectively in local smooth convergence topology. By \cite{DH_ovals, Brendle_inscribed, HK17}, the elements in $\mathcal{A}^{k}_{\mu}$ and $\mathscr{A}^k$ are ancient noncollapsed and the spaces $\mathcal{A}^{k}_{\mu}$ and $\mathscr{A}^k$ are sequentially compact. 
For ease of notation, we also denote these spaces by $\mathcal{A}^{\circ}_{\mu}, \mathscr{A}^{\circ}, \mathcal{A}_{\mu}, \mathscr{A}$ when dimensions $k, n$ are clear.

\section{Jacobian estimates and transformation map}
Given any $k$-oval $\mathcal{M}=\{M_t\}$ in $\mathbb{R}^{n+1}$ (with coordinates chosen as usual) and  parameters $\alpha\in \mathbb{R}^{k}, \beta\in \mathbb{R}, \gamma\in \mathbb{R}$ and $R\in \mathrm{SO}(k)$, we set
\begin{equation}
\mathcal{M}^{\alpha, \beta, \gamma, R}:=\{e^{\gamma/2} R(M_{e^{-\gamma}(t-\beta)}-\alpha)\},
\end{equation}
We first have the following proposition.
\begin{proposition}[orthogonality {c.f. \cite[Proposition 5.1]{CDZ_ovals}}]\label{prop_orthogonality}
For any $k$-oval $\mathcal{M}$ in $\mathbb{R}^{n+1}$ and any $\kappa>0$, there exists  constants $\tau_{\ast}={\tau}_\ast(\mathcal{M},\kappa)>-\infty$  with the following significance. For every $\tau_0\leq {\tau}_{\ast}$ and every $\eta \in [-\frac{1}{1000}, \frac{1}{1000}]$,  there exist  $\alpha\in \mathbb{R}^{k}, \beta\in \mathbb{R}, \gamma\in \mathbb{R}$ and $R\in \mathrm{SO}(k)$ depending on $\mathcal{M}, \kappa, \tau_0$  such that the truncated renormalized profile function $v^{\alpha, \beta, \gamma, R}_{\cC}$ of the transformed flow $\mathcal{M}^{\alpha, \beta, \gamma, R}$  is $\kappa$-quadratic at time $\tau_0$ and satisfies
\begin{equation}\label{cross}
\qquad \Big\langle v^{\alpha, \beta, \gamma, R}_{\cC}(\tau_0), y_{i}y_{j}\Big\rangle_\cH=0\quad i\not=j
\end{equation}
\begin{equation}\label{quadractic_mode}
 \Big\langle v^{\alpha, \beta, \gamma, R}_{\cC}(\tau_0) +\frac{\sqrt{2(n-k)}(|\mathbf{y}|^2-2k)}{4|\tau_0|},|\mathbf{y}|^2-2k\Big\rangle_\cH=\frac{\eta\kappa}{|\tau_0|},
\end{equation}
\begin{equation}\label{positive_mode}
\fp_+ \big(v^{\alpha, \beta, \gamma, R}_\cC(\tau_0)-\sqrt{2(n-k)}\big)=0.
\end{equation}
\end{proposition}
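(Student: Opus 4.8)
The plan is to establish the recentering claimed in the proposition by a quantitative implicit-function (equivalently, Brouwer-degree) argument applied to the map that sends the transformation parameters $(\alpha,\beta,\gamma,R)$ to the tuple of ``offending'' spectral modes of the transformed truncated profile $v^{\alpha,\beta,\gamma,R}_{\cC}(\tau_0)$; this follows the recentering scheme of \cite[Proposition~5.1]{CDZ_ovals} (see also \cite{ADS18,CDDHS}), the one genuinely new feature being the parabolic-dilation parameter $\gamma$, which is included precisely in order to control the trace (quadratic) mode $\langle\,\cdot\,,|\mathbf{y}|^2-2k\rangle$ of \eqref{quadractic_mode}, together with the relaxation of the right-hand side there from $0$ to the small prescribed value $\tfrac{\eta\kappa}{|\tau_0|}$. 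The essential bookkeeping is a dimension count: the scalar conditions imposed by \eqref{cross}, \eqref{quadractic_mode} and \eqref{positive_mode} number $\binom{k}{2}+1+(k+1)=\binom{k}{2}+k+2$, which equals $\dim\!\big(\mathbb{R}^{k}\times\mathbb{R}\times\mathbb{R}\times\mathrm{SO}(k)\big)$.

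First I would fix the base point. By Definition~\ref{def-koval} and the sharp asymptotics of \cite[Theorem~1.8]{DZ_spectral_quantization}, the renormalized profile of $\mathcal M$ satisfies $\big\||\tau|\big(v-\sqrt{2(n-k)}\big)+\tfrac{\sqrt{2(n-k)}}{4}(|\mathbf{y}|^2-2k)\big\|_{C^{m}}\to 0$ on compact sets, and (after taking $\tau_\ast$ negative enough) the graphical-radius bound \eqref{condition3} holds along $\mathcal M$ for $\tau\le\tau_0$; hence for every $\kappa'>0$ the uncentered profile $v_{\cC}(\tau_0)$ is $\kappa'$-quadratic at $\tau_0$ once $\tau_0\le\tau_\ast'(\mathcal M,\kappa')$, and moreover the three quantities $\big(\langle v_{\cC}(\tau_0),y_iy_j\rangle\big)_{i<j}$, $\fp_+\big(v_{\cC}(\tau_0)-\sqrt{2(n-k)}\big)$ and $\langle v_{\cC}(\tau_0)+\tfrac{\sqrt{2(n-k)}}{4|\tau_0|}(|\mathbf{y}|^2-2k),\,|\mathbf{y}|^2-2k\rangle$ are all $o(1/|\tau_0|)$ in $\mathcal H$. (Only these $\mathbf{y}$-dependent modes need correcting; the $\vartheta$-dependent low modes are already controlled by the $k$-oval asymptotics, which is why no rotation of the $S^{n-k}$-factor enters.) In particular the distance in $\mathbb{R}^{\binom{k}{2}+k+2}$ from this data to the target $\big(0,\tfrac{\eta\kappa}{|\tau_0|},0\big)$ is $\le\tfrac{\kappa}{100\,|\tau_0|}$, uniformly for $|\eta|\le\tfrac1{1000}$, once $\tau_0$ is sufficiently negative.

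Next I would analyse the recentering map $\Phi_{\tau_0}(\alpha,\beta,\gamma,R)$ that assembles these three quantities for $\mathcal M^{\alpha,\beta,\gamma,R}$ and compute its linearization. Writing $\bar M^{\alpha,\beta,\gamma,R}_\tau$ and using $v\approx\sqrt{2(n-k)}-\tfrac{\sqrt{2(n-k)}}{4|\tau|}(|\mathbf{y}|^2-2k)$ one reads off the leading effect of each generator and thus a ``dictionary'': the parabolic dilation $\mathcal D_\gamma$ acts in renormalized time as $\tau\mapsto\tau+\gamma$, so it perturbs the coefficient $\tfrac1{4|\tau_0|}$ and therefore moves principally the trace mode $|\mathbf{y}|^2-2k$ (derivative of order $|\tau_0|^{-2}$); the ambient time-shift $\beta$ dilates the profile by $\approx 1+\tfrac{\beta}{2}e^{\tau_0}$ and moves principally the constant mode $1$ (order $e^{\tau_0}$); the translation $\alpha$ shifts $\mathbf{y}$ by $e^{\tau_0/2}\alpha$ and moves principally the linear modes $y_j$ (order $e^{\tau_0/2}/|\tau_0|$); and $R=I+A$ with $A\in\mathfrak{so}(k)$ conjugates the symmetric quadratic form $Q(\tau_0)$ of $v_{\cC}(\tau_0)-\sqrt{2(n-k)}$ and moves principally the off-diagonal modes $y_iy_j$. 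After rescaling the four groups of parameters by the reciprocals of these scales, $d\Phi_{\tau_0}$ becomes a $\tau_0$-independent (but $\mathcal M$-dependent) block-triangular part plus an error that vanishes as $\tau_0\to-\infty$; one then solves $\Phi_{\tau_0}=\big(0,\tfrac{\eta\kappa}{|\tau_0|},0\big)$ by a quantitative inverse-function theorem, or by a Brouwer-degree count on a small rescaled box, using the smallness of $\Phi_{\tau_0}(0,0,0,I)$ from the previous step. The parameters so produced are small in the rescaled sense, hence change $v_{\cC}(\tau_0)$ by $o(\kappa/|\tau_0|)$ in $\mathcal H$ and preserve \eqref{condition3} on $[2\tau_0,\tau_0]$; starting from $\kappa'$-quadraticity with $\kappa'\ll\kappa$, the transformed profile is therefore $\kappa$-quadratic at $\tau_0$, which completes the argument.

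The step I expect to be the main obstacle is showing that the rescaled linearization above is an isomorphism with bounds uniform as $\tau_0\to-\infty$. The four generators act at genuinely different orders of magnitude, so the estimate must be carried out in the rescaled parameters, after which one must show the off-diagonal couplings between blocks are of strictly lower order than the corresponding diagonal blocks; the delicate coupling is that of the parabolic dilation $\gamma$ with the off-diagonal modes through $\partial_\tau Q(\tau_0)$, and, relatedly, the $R$-block is itself governed by the possibly small spectral gaps of the anisotropic part of $Q(\tau_0)$. These are controlled using the second-order terms in the sharp asymptotics of \cite{DZ_spectral_quantization} and the mapping properties of the Ornstein--Uhlenbeck operator $\mathcal L$, exactly as in \cite{CDZ_ovals}; and since the target for the off-diagonal modes in \eqref{cross} is $0$, one may, if the eccentricities of $\mathcal M$ degenerate, first normalize $Q(\tau_0)$ to be diagonal by an element of $\mathrm{SO}(k)$ (up to a reflection absorbed in the ambient $\mathbb{Z}_2^{k}$ symmetry) and only then run the inverse-function argument on the remaining parameters.
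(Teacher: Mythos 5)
The paper never actually proves this proposition in-house --- it is imported from \cite[Proposition 5.1]{CDZ_ovals} --- but the surrounding text makes the intended route clear, and your proposal deviates from it precisely where your argument has a genuine gap. The intended argument uses the structural fact that every $k$-oval is $\mathbb{Z}_2^k\times O(n+1-k)$-symmetric (\cite[Corollary 1.4]{CDZ_ovals}, invoked in the proof of Theorem \ref{classification_theorem} and implicitly in the paragraph right after the proposition): choosing $R$ to align the coordinate hyperplanes with the reflection hyperplanes and $\alpha$ to be the symmetry center makes the profile even in each $y_i$ for \emph{all} $\tau$, so \eqref{cross} and the $y_1,\dots,y_k$-components of \eqref{positive_mode} hold identically and are preserved by any subsequent time-shift and parabolic dilation. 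Only two scalar conditions then remain --- the constant mode and the trace mode with target $\eta\kappa/|\tau_0|$ --- and these are solved with the two parameters $(\beta,\gamma)$ by exactly the degree/Jacobian machinery the paper develops (Proposition \ref{JPhiestimates} and the argument of Corollary \ref{prop_continuous_orthogonality}); the nonzero target $\eta\kappa/|\tau_0|$ is attainable because the degree argument covers a full disc of values of radius comparable to $\kappa/|\tau_0|$.

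Your proposal instead solves all $\binom{k}{2}+k+2$ conditions at once by an inverse-function/degree argument in $(\alpha,\beta,\gamma,R)$; the dimension count and the orders of magnitude in your ``dictionary'' for $\alpha$, $\beta$, $\gamma$ are consistent with Proposition \ref{JPhiestimates}, and that part is fine. The gap is the $R$-block, which you yourself flag: the linearization in the rotation direction acts only through the traceless part of the quadratic form $Q(\tau_0)$, which can be arbitrarily small and vanishes exactly for the $O(k)\times O(n+1-k)$-symmetric oval, so no uniform invertibility is available. Your proposed repair --- diagonalize $Q(\tau_0)$ by a rotation first, then run the IFT in the remaining parameters --- does not close this gap, because \eqref{cross} must hold for the \emph{final} flow $\mathcal{M}^{\alpha,\beta,\gamma,R}$: the subsequent time-shift/dilation evaluates the profile at the time $(1+\Gamma)\tau_0$, where $|\Gamma|$ may be of size $\sim\kappa$ (a time change of order $\kappa|\tau_0|$), and at that time the off-diagonal neutral modes --- known only to be $o(1/|\tau|)$, not zero --- generically reappear; exact vanishing, not smallness, is what is required. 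Re-inserting $R$ into the solve to compensate brings back the degeneracy (plus a continuity problem for the diagonalizing rotation at eigenvalue crossings). So without the symmetry input of \cite[Corollary 1.4]{CDZ_ovals}, the rotation/translation part of your recentering is not established; with it, your scheme collapses to the two-parameter $(\beta,\gamma)$ argument that the paper actually uses.
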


Then we discuss the uniqueness of such zeros as in \cite[Proposition 5.1]{CDDHS}. Recall that thanks to the $\mathbb{Z}_2^k$-symmetry the profile function of any $\mathcal{M}\in \mathcal{A}^{\circ}$ or of any $k$-oval $\mathcal{M}$ is orthogonal to the eigenfunctions $y_i, y_jy_\ell\quad 1\leq j<\ell\leq k$. Our first goal concerns orthogonality relations with respect to the eigenfunctions
\begin{equation}
\psi_{k+1}=1,\qquad \psi_{k+2}=|{\bf y}|^2-2k,
\end{equation}
where  they satisfy $\langle \psi_{k+2},\psi_{k+2}^2\rangle_\cH = 8 \|\psi_{k+2}\|^2=64\|\psi_{k+1}\|^2$. To this end, given $\mathcal{M}=\{M_t\}$, and parameters ${\tilde{\beta}}$ and ${{\gamma}}$, we consider the transformed flow
\begin{equation}
\mathcal{M}^{{\tilde{\beta}},{{\gamma}}}=\{ e^{{{\gamma}}/2} M_{e^{-{{\gamma}}}(t-{\tilde{\beta}})}\}.
\end{equation}
For convenience we set
\begin{equation}\label{bgamma}
   \tilde{b}=\sqrt{1+{\tilde{\beta}} e^{\tau}}-1, \qquad {\tilde{\Gamma}}=\frac{{\gamma}-\ln (1+{\tilde{\beta}} e^{\tau})}{\tau}.
\end{equation}
Then, the renormalized profile functions of $\mathcal{M}^{{\tilde{\beta}},{{\gamma}}}$ and $\mathcal{M}$ are related by
\begin{equation}\label{bgammatransform}
    v^{\tilde{b} {\tilde{\Gamma}}}({\bf y},\tau)=({1+\tilde{b}})\, v\left(\frac{{\bf y}}{{1+\tilde{b}}}, (1+{\tilde{\Gamma}})\tau\right).
\end{equation}
Our first goal is to find a canonical zero of the map 
\begin{equation}\label{Psi=00}
\Psi_\tau(\tilde{b}, {\tilde{\Gamma}})=\left(  \Big\langle  \psi_{k+1} ,v^{{\tilde{b}\tilde{\Gamma}}}_\cC-\sqrt{2(n-k)} \Big\rangle_\cH , 
\Big\langle  \psi_{k+2}, v^{{\tilde{b}\tilde{\Gamma}}}_\cC +\frac{\sqrt{2(n-k)}\psi_{k+2}}{4|\tau|}\Big\rangle_\cH \right).
\end{equation}
The key towards finding such a canonical zero is the Jacobian estimate, which we restate here for convenience of the reader:

\begin{proposition}[Jacobian estimate]\label{JPhiestimates} There exist $\kappa>0$ and $\tau_\ast>-\infty$ with the following significance. If $\mathcal{M}$ is $\kappa$-quadratic at time $\tau_0\leq \tau_\ast$, then 
\begin{equation}\label{1b}
\left\langle \psi_{k+1},\partial_{\tilde{b}} v_{\cC}^{{\tilde{b}\tilde{\Gamma}}}\right\rangle_\cH = \sqrt{2(n-k)}\|\psi_{k+1}\|^2+\frac{\sqrt{2(n-k)}}{8|\tau|}\|\psi_{k+2}\|^2+O(\frac{\kappa}{|\tau|}),
\end{equation}
\begin{equation}\label{2b}
    \left\langle \psi_{k+2},\partial_{\tilde{b}} v_{\cC}^{{\tilde{b}\tilde{\Gamma}}}\right\rangle_\cH = \frac{\sqrt{2(n-k)}\|\psi_{k+2}\|^2}{4|\tau|}+O(\frac{\kappa}{|\tau|}),
\end{equation}
\begin{equation}\label{1g}
     \left \langle \psi_{k+1},  \partial_{{\tilde{\Gamma}}} v_{\cC}^{\tilde{b} {\tilde{\Gamma}}} \right\rangle_\cH =O(\kappa)
 \end{equation}
 \begin{equation}\label{2g}
      \left \langle \psi_{k+2},  \partial_{{\tilde{\Gamma}}} v_{\cC}^{\tilde{b} {\tilde{\Gamma}}} \right\rangle_\cH =\frac{\|\psi_{k+2}\|^2_{\mathcal{H}}}{{2\sqrt{2(n-k)}}|\tau|}+O\left(\frac{\kappa}{|\tau|}\right).
 \end{equation}
 and in particular
\begin{equation}
    \mathrm{det}(J\Psi_\tau(\tilde{b}, {\tilde{\Gamma}}))=\frac{\|\psi_{k+1}\|^2\|\psi_{k+2}\|^2}{2|\tau|}+O\left(\frac{\kappa}{|\tau|}\right)>0
\end{equation}
holds for all $\tau\leq\tau_0$ and all $(\tilde b,{\tilde{\Gamma}})$ with $|\tau|^2{\tilde{b}}^2+{\tilde{\Gamma}}^2\leq 100 \kappa^2$, where we write $f=O(g)$ if there exists some constant $C<\infty$ independent of $\tau\leq\tau_0$ and $(b,{\tilde{\Gamma}})$, with $|\tau|^2{\tilde{b}}^2+{\tilde{\Gamma}}^2\leq 100 \kappa^2$, such that $|f|\leq Cg$.
\end{proposition}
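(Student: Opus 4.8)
The plan is to compute the four entries of $J\Psi_\tau$ (see \eqref{Psi=00}) directly from the explicit conjugation formula \eqref{bgammatransform}, to substitute the quadratic ansatz for the profile that strong $\kappa$-quadraticity provides, to evaluate the resulting Gaussian integrals, and to read off the determinant from the fact that the $2\times 2$ matrix is diagonally dominant in a graded sense. After shrinking $\kappa$ and decreasing $\tau_\ast$ we may invoke Theorem \ref{point_strong}: since $\mathcal{M}$ is $\kappa$-quadratic at $\tau_0\le\tau_\ast$, it is strongly $\kappa$-quadratic from $\tau_0$, so for every $\tau\le\tau_0$ the radius $\rho(\tau)=|\tau|^{1/10}$ is an admissible graphical radius and $\|\hat{u}(\cdot,\tau)+\tfrac{\sqrt{2(n-k)}}{4|\tau|}(|\mathbf{y}|^2-2k)\|_{\cH}\le\kappa/|\tau|$. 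We also record that the admissible range $|\tau|^2\tilde{b}^2+\tilde{\Gamma}^2\le 100\kappa^2$ forces $|\tilde{b}|\le 10\kappa/|\tau|$ and $|\tilde{\Gamma}|\le 10\kappa$, so $1+\tilde{b}$ and $1+\tilde{\Gamma}$ lie within $1+O(\kappa)$ of $1$ and $s:=(1+\tilde{\Gamma})\tau$ satisfies $\tfrac{1}{2}|\tau|\le|s|\le 2|\tau|$; this uniformity is what makes all constants below independent of $\tau$ and $(\tilde{b},\tilde{\Gamma})$.

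Differentiating \eqref{bgammatransform} with $\mathbf{z}=\mathbf{y}/(1+\tilde{b})$ and $s=(1+\tilde{\Gamma})\tau$, the product and chain rules give
\begin{equation*}
\partial_{\tilde{b}}v^{\tilde{b}\tilde{\Gamma}}(\mathbf{y},\tau)=\big(v-\mathbf{z}\cdot\nabla v\big)(\mathbf{z},s),\qquad \partial_{\tilde{\Gamma}}v^{\tilde{b}\tilde{\Gamma}}(\mathbf{y},\tau)=(1+\tilde{b})\,\tau\,(\partial_s v)(\mathbf{z},s).
\end{equation*}
Since the cutoff used to define $v_{\cC}$ depends only on $(\mathbf{y},\tau)$ and not on $(\tilde{b},\tilde{\Gamma})$, differentiation in $(\tilde{b},\tilde{\Gamma})$ commutes with the truncation up to the contribution of the region where the cutoff is not identically $1$; there $|\mathbf{y}|\gtrsim|\tau|^{1/10}$, the Gaussian weight is $\le e^{-c|\tau|^{1/5}}$, and $v,\nabla v$ remain controlled because $|\tau|^{1/10}$ is an admissible graphical radius, so the truncation error in every entry is $O(e^{-c|\tau|^{1/5}})$ and is discarded. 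Substituting $v(\mathbf{z},s)=\sqrt{2(n-k)}-\tfrac{\sqrt{2(n-k)}}{4|s|}(|\mathbf{z}|^2-2k)+E(\mathbf{z},s)$, where $\|E(\cdot,s)\|_{\cH}\le\kappa/|s|\le 2\kappa/|\tau|$ by strong $\kappa$-quadraticity, one finds that $v-\mathbf{z}\cdot\nabla v$ and $\partial_s v$ are explicit quadratic-in-$\mathbf{z}$ polynomials plus the errors $E-\mathbf{z}\cdot\nabla E$ and $\partial_s E$. Pairing the polynomial parts against $\psi_{k+1}=1$ and $\psi_{k+2}=|\mathbf{y}|^2-2k$, changing variables $\mathbf{y}\mapsto\mathbf{z}$, and using $\langle\psi_{k+1},\psi_{k+2}\rangle_{\cH}=0$, $\langle 1,|\mathbf{y}|^2\rangle_{\cH}=2k\|\psi_{k+1}\|^2$, $\langle\psi_{k+2},|\mathbf{y}|^2\rangle_{\cH}=\|\psi_{k+2}\|^2$, together with $1/|s|=1/|\tau|+O(\kappa/|\tau|)$ and $(1+\tilde{b})^{\pm1},(1+\tilde{\Gamma})^{\pm1}=1+O(\kappa)$, produces exactly the main terms displayed in \eqref{1b}, \eqref{2b}, \eqref{1g} and \eqref{2g}; in particular the off-diagonal pairing $\langle\psi_{k+1},\partial_{\tilde{\Gamma}}v^{\tilde{b}\tilde{\Gamma}}\rangle$ has no explicit main term since $\langle 1,|\mathbf{z}|^2-2k\rangle_{\cH}=O(\kappa/|\tau|)$.

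The main obstacle is the estimate of the error contributions of $E-\mathbf{z}\cdot\nabla E$ and $\partial_s E$. The $\cH$-bound on $E$ is given by strong $\kappa$-quadraticity; to control $\nabla E$ and $\partial_s E$ one applies interior parabolic ($L^2$/Schauder) estimates to the quasilinear graphical mean curvature flow equation satisfied by $v$ on $B(0,2|\tau|^{1/100})$, using the pointwise $C^4$ bounds built into $\kappa$-quadraticity, to obtain $\|\nabla E(\cdot,\tau)\|_{\cH},\ \|\partial_s E(\cdot,\tau)\|_{\cH}=O(\kappa/|\tau|)$ uniformly in $\tau\le\tau_0$ and in the admissible $(\tilde{b},\tilde{\Gamma})$ (the rescaling and time-dilation being $1+O(\kappa)$-close to the identity keeps these estimates uniform). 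This yields the $O(\kappa/|\tau|)$ errors in \eqref{1b} and \eqref{2b} immediately, and the $O(\kappa)$ error in \eqref{1g}, where the factor $\tau$ in $\partial_{\tilde{\Gamma}}v^{\tilde{b}\tilde{\Gamma}}$ multiplies a term of size $O(\kappa/|\tau|)$ --- and this loss cannot be improved along the $\psi_{k+1}$-direction, because $\mathcal{L}\psi_{k+1}=\psi_{k+1}$ is an unstable eigenfunction, so (writing $\partial_\tau u=\mathcal{L}u+Q$ for the renormalized graphical flow equation, with $Q$ the quadratic and higher remainder and $\|Q\|_{\cH}=O(|\tau|^{-2})$) $\langle\psi_{k+1},\partial_\tau u\rangle=\langle\psi_{k+1},u\rangle+\langle\psi_{k+1},Q\rangle$ already contains the term $\langle\psi_{k+1},u\rangle=\langle\psi_{k+1},E\rangle=O(\kappa/|\tau|)$. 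For the sharper $O(\kappa/|\tau|)$ in \eqref{2g} one exploits instead that $\psi_{k+2}$ is a null eigenfunction: $\langle\psi_{k+2},\partial_\tau u\rangle=\langle\mathcal{L}\psi_{k+2},u\rangle+\langle\psi_{k+2},Q\rangle=\langle\psi_{k+2},Q\rangle$, and the leading $|\tau|^{-2}$ part of $\langle\psi_{k+2},Q\rangle$ is precisely cancelled against $\partial_\tau$ of the $\tfrac{1}{|\tau|}$-correction built into $E$ --- this cancellation is the Angenent--Daskalopoulos--Sesum reduced ODE on $\cH_0$ --- leaving $\langle\psi_{k+2},\partial_s E\rangle=O(\kappa/|\tau|^2)$, which the factor $\tau$ converts into $O(\kappa/|\tau|)$.

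Finally, assembling the $2\times 2$ matrix from \eqref{1b}--\eqref{2g} and expanding the determinant, the product of the two diagonal entries equals $\tfrac{\|\psi_{k+1}\|^2\|\psi_{k+2}\|^2}{2|\tau|}+O(\kappa/|\tau|)$, while the off-diagonal product is $O(\kappa)\cdot O(1/|\tau|)=O(\kappa/|\tau|)$; hence $\det\big(J\Psi_\tau(\tilde{b},\tilde{\Gamma})\big)=\tfrac{\|\psi_{k+1}\|^2\|\psi_{k+2}\|^2}{2|\tau|}+O(\kappa/|\tau|)$, which is strictly positive once $\kappa$ is fixed small enough --- uniformly for $\tau\le\tau_0$ and $(\tilde{b},\tilde{\Gamma})$ with $|\tau|^2\tilde{b}^2+\tilde{\Gamma}^2\le 100\kappa^2$, as claimed.
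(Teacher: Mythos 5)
Your outline coincides with the paper's strategy: differentiate the transformation formula \eqref{bgammatransform} to get $\partial_{\tilde b}v^{\tilde b\tilde\Gamma}=v-\mathbf{z}\cdot\nabla v$ and $\partial_{\tilde\Gamma}v^{\tilde b\tilde\Gamma}=(1+\tilde b)\tau\,\partial_s v$, feed in the quadratic asymptotics for the main terms, and for the delicate entry \eqref{2g} exploit that $\psi_{k+2}$ is a null eigenfunction together with the ADS-type cancellation coming from the quadratic nonlinearity. The genuine gap is in how you control the errors. You reduce everything to $\|\nabla E\|_{\cH},\ \|\partial_s E\|_{\cH}=O(\kappa/|\tau|)$, claimed to follow from ``interior parabolic estimates'' plus ``the pointwise $C^4$ bounds built into $\kappa$-quadraticity''. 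But the $C^4$ information in Definition \ref{k_tau00_intro} is only $\|v-\sqrt{2(n-k)}\|_{C^4(B(0,2|\tau|^{1/100}))}\le|\tau|^{-1/50}$, far too weak to upgrade $\cH$-smallness of $E$ to $O(\kappa/|\tau|)$ smallness of its derivatives; the available tool is the Gaussian-$L^2$ smoothing estimate of \cite[Prop.\ 2.8]{CDDHS}, \cite[Lemma 3.2]{DZ_spectral_quantization}, which gives the first-order bound \eqref{der_bound_eq} but not a bound on $\partial_\tau E$, since $\partial_\tau E$ contains $\mathcal{L}E$ and would require weighted second-order control. The paper is organized precisely to avoid ever needing such a bound: for the $\tilde b$-entries it integrates by parts so no derivative of the error appears, and for the $\tilde\Gamma$-entries it substitutes the evolution equation for $v_\tau$, so that only the explicit nonlinearities, $|Dv|$, $|D^2v|$ (controlled via \eqref{grad_est_Sec5} and \cite[Lemma 4.16]{CDDHS}) and first derivatives of the truncated error enter. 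As written, both \eqref{1g} and \eqref{2g} in your argument rest on the unproven bound on $\partial_s E$.

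The second, decisive gap is the precision in \eqref{2g}: after the cancellation you must show the remainder of $\tau\langle\psi_{k+2},Q\rangle$ is $O(\kappa/|\tau|)$, not merely $O(1/|\tau|)$, since otherwise the determinant's positivity for small $\kappa$ does not follow. Writing $u=-\tfrac{\sqrt{2(n-k)}}{4|\tau|}\psi_{k+2}+E$, the term to control is $\langle\psi_{k+2},u^2\rangle$, whose cross term $|\tau|^{-1}\langle\psi_{k+2}^2,E\rangle_{\cH}$ is not bounded by $\|E\|_{\cH}\le\kappa/|\tau|$ alone because of the unbounded weight $\psi_{k+2}^2$; the paper handles exactly this with a splitting of the domain, the weighted Poincar\'e inequality \eqref{easy_Poincare} combined with the gradient bound \eqref{der_bound_eq}, and the tail estimate \eqref{gaussian_tail}. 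Your proposal records only $\|Q\|_{\cH}=O(|\tau|^{-2})$ and asserts that the cancellation ``leaves $\langle\psi_{k+2},\partial_sE\rangle=O(\kappa/|\tau|^2)$'', so the decisive weighted estimate is missing. Two smaller points: the truncation is $v_{\cC}=v\,\chi_{\cC}(v)$, a cutoff in the value of $v$ (hence depending on $(\tilde b,\tilde\Gamma)$), not a cutoff in $(\mathbf{y},\tau)$, so differentiating it produces the terms $v\chi_{\cC}'(v)\partial v$ which must be, and in the paper are, discarded via \eqref{gaussian_tail}; and the main term your computation actually produces in \eqref{2g} is $\tfrac{\sqrt{2(n-k)}}{4|\tau|}\|\psi_{k+2}\|^2$, which agrees with the displayed constant only when $n-k=1$ (the discrepancy appears to stem from a dropped factor $(n-k)$ in the paper's evaluation of $\langle\psi_{k+2},u^2\rangle$ and is harmless for the sign of the determinant, but your unqualified claim of exact agreement indicates the computation was not carried through).
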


\begin{proof} 
Throughout this proof, we write $f=O(g)$ if there exists some constant $C<\infty$ independent of $\tau\leq\tau_0$ and $(\tilde b,{\tilde{\Gamma}})$, with $|\tau|^2{\tilde{b}}^2+{\tilde{\Gamma}}^2\leq 100 \kappa^2$, such that $|f|\leq Cg$.\footnote{The constraint $|\tau|^2{\tilde{b}}^2+{\tilde{\Gamma}}^2\leq 100 \kappa^2$ is motivated by the degree argument below.} 
Let us also recall that our cutoff function $\chi_\cC$ satisfies
\be
 \chi_\mathcal{C}(v)=0 \,\,\, \textrm{for}\, \,\, v\leq \tfrac58 \theta  \qquad \mbox{and} \qquad    \chi_\mathcal{C}(v)=1 \,\,\, \textrm{for}\, \, \, v\geq \tfrac78 \theta,
 \ee
and that thanks to \cite[Proposition 2.2]{CDZ_ovals} (uniform sharp asymptotics) and convexity for $\theta$ small enough we have the Gaussian tail estimate
\begin{equation}\label{gaussian_tail}
e^{-\frac{|{\bf{y}}|^2}{4}} 1_{\{ v^{{\tilde{b}\tilde{\Gamma}}}\leq \theta\}}\leq e^{-\frac{|\tau|}{3}},
\end{equation}
and the gradient estimate
\begin{equation}\label{grad_est_Sec5}
| D v^{{\tilde{b}\tilde{\Gamma}}}|  1_{\{v^{{\tilde{b}\tilde{\Gamma}}}\geq \theta/2\}}=O(|\tau|^{-1/2}).
\end{equation}
Now, to prove the proposition, we have to estimate the derivatives with respect to the parameters $b$ and ${\tilde{\Gamma}}$. We start with the former:

\begin{claim}[$\tilde b$-derivatives]\label{claim_b_der}
For the derivatives with respect to $\tilde b$ we have
\begin{equation}\label{b-k+1}
\left\langle \psi_{k+1},\partial_{\tilde{b}} v_{\cC}^{{\tilde{b}\tilde{\Gamma}}}\right\rangle_\cH = \sqrt{2(n-k)}\|\psi_{k+1}\|^2+\frac{\sqrt{2(n-k)}}{8|\tau|}\|\psi_{k+2}\|^2+O(\frac{\kappa}{|\tau|}),
\end{equation}
\begin{equation}\label{b-k+2}
    \left\langle \psi_{k+2},\partial_{\tilde{b}} v_{\cC}^{{\tilde{b}\tilde{\Gamma}}}\right\rangle_\cH = \frac{\sqrt{2(n-k)}\|\psi_{k+2}\|^2}{4|\tau|}+O(\frac{\kappa}{|\tau|}),
\end{equation}
\end{claim}

\begin{proof}[{Proof of Claim \ref{claim_b_der}}]
First, by the transformation formula  \eqref{bgammatransform} we have
\begin{equation}\label{partialb}
   \partial_{\tilde{b}} v^{\tilde{b} {\tilde{\Gamma}}}({\bf y},\tau)= v\left(\frac{{\bf y}}{{1+\tilde{b}}}, (1+{\tilde{\Gamma}})\tau\right)-\frac{{\bf y}}{{1+\tilde{b}}}\cdot D v\left(\frac{{\bf y}}{{1+\tilde{b}}}, (1+{\tilde{\Gamma}})\tau\right).
\end{equation}
Via integration by parts and noticing the Gaussian weighed in the integral, this implies for $i\in \{k+1, k+2\}$ we have
\begin{multline}
   \left\langle \psi_i,\chi_{\cC}(v^{{\tilde{b}\tilde{\Gamma}}})\partial_{\tilde{b}} v^{{\tilde{b}\tilde{\Gamma}}}\right\rangle_\cH=\left\langle 
  \left(\psi_i+ \frac{2{\bf y}\cdot D \psi_i+(2k-|{\bf{y}}|^2)\psi_i}{2({1+\tilde{b}})}\right)\chi_{\cC}(v^{{\tilde{b}\tilde{\Gamma}}}),v\right\rangle_\cH\\
  +\left\langle \psi_i\frac{{\bf y}\cdot D v^{{\tilde{b}\tilde{\Gamma}}}}{{1+\tilde{b}}}\chi_{\cC}'(v^{{\tilde{b}\tilde{\Gamma}}}), v\right\rangle_\cH,
\end{multline}
where the function $v$ is evaluated at $\big(\frac{{\bf y}}{{1+\tilde{b}}}, (1+{\tilde{\Gamma}})\tau\big)$.

Using the basic facts recalled above we see that
\begin{equation}
\left\langle 
  \left(\psi_i+ \frac{2{\bf y}\cdot D \psi_i+(2k-|{\bf{y}}|^2)\psi_i}{2({1+\tilde{b}})}\right)\left(1-\chi_{\cC}(v^{{\tilde{b}\tilde{\Gamma}}})\right), v\right\rangle_\cH= O(|\tau|^{-10}),
\end{equation}
and
\begin{equation}
\left\langle \psi_i \frac{{\bf y}\cdot D v^{{\tilde{b}\tilde{\Gamma}}}}{{1+\tilde{b}}}\chi_{\cC}'(v^{{\tilde{b}\tilde{\Gamma}}}), v\right\rangle_\cH=O(|\tau|^{-10}).
\end{equation} 

Next, by the $\kappa$-quadraticity assumption we have the $\mathcal{H}$-norm expansion
\begin{align}\label{ubg_expansion}
    v\left(\frac{{\bf y}}{{1+\tilde{b}}}, (1+{\tilde{\Gamma}})\tau\right)
   =\sqrt{2(n-k)}-\frac{\sqrt{2(n-k)}(|{\bf{y}}|^2-2k)}{4(1+{\tilde{\Gamma}})|\tau|}+O\left(\frac{\kappa}{|\tau|}\right).
\end{align}
This yields
\begin{align}
  &\quad \left\langle\left(\psi_{k+1}+ \frac{2{\bf y}\cdot D \psi_{k+1}+(2k-|{\bf{y}}|^2)\psi_{k+1}}{2({1+\tilde{b}})}\right),v\right\rangle_\cH\\
  &= \sqrt{2(n-k)}\|\psi_{k+1}\|^2+\frac{\sqrt{2(n-k)}}{8|\tau|}\|\psi_{k+2}\|^2+O(\frac{\kappa}{|\tau|}),
\end{align}
and, taking into account the identities $\langle  \psi_{k+2}^2 -8k,1\rangle_\cH = 0$ and $\langle \psi_{k+2},\psi_{k+2}^2\rangle_\cH = 8 \|\psi_{k+2}\|^2_{\mathcal{H}}$ also yields
\begin{align}
&\quad\left\langle 
  \left(\psi_{k+2}+ \frac{2{\bf y}\cdot D \psi_{k+2}+(2k-|{\bf{y}}|^2)\psi_{k+2}}{2({1+\tilde{b}})}\right),v\right\rangle_\cH\\
&=\left\langle 
  \left(\psi_{k+2}+2(\psi_{k+2}+2k)-\frac{\psi_{k+2}^2}{2}\right),v\right\rangle_\cH+O(\frac{\kappa}{|\tau|})\\
&=-\frac{\sqrt{2(n-k)}\|\psi_{k+2}\|^2_{\mathcal{H}}}{4|\tau|}+O(\frac{\kappa}{|\tau|})
\end{align}
Finally, observe that
\begin{equation}
\left\langle \psi_i,\partial_{\tilde{b}} v_{\cC}^{{\tilde{b}\tilde{\Gamma}}}\right\rangle_\cH=\left\langle \psi_i,\chi_{\cC}(v^{{\tilde{b}\tilde{\Gamma}}})\partial_{\tilde{b}} v^{{\tilde{b}\tilde{\Gamma}}}\right\rangle_\cH+\left\langle \psi_i,v^{{\tilde{b}\tilde{\Gamma}}}\chi_{\cC}'(v^{{\tilde{b}\tilde{\Gamma}}})\partial_{\tilde{b}} v^{{\tilde{b}\tilde{\Gamma}}}\right\rangle_\cH,
\end{equation}
and
\begin{equation}
\left\langle \psi_i,v^{{\tilde{b}\tilde{\Gamma}}}\chi_{\cC}'(v^{{\tilde{b}\tilde{\Gamma}}})\partial_{\tilde{b}} v^{{\tilde{b}\tilde{\Gamma}}}\right\rangle_\cH=O(|\tau|^{-10}).
\end{equation} 
Combining the above estimates the Claim \ref{claim_b_der} ($\tilde b$-derivatives) follows.
\end{proof}

\begin{claim}[${\tilde{\Gamma}}$-derivatives]\label{claim_gamma_der} For the derivatives with respect to ${\tilde{\Gamma}}$ we have
\begin{equation}\label{gamma-k+1}
     \left \langle \psi_{k+1},  \partial_{{\tilde{\Gamma}}} v_{\cC}^{\tilde{b} {\tilde{\Gamma}}} \right\rangle_\cH =O(\kappa)
 \end{equation}
 \begin{equation}\label{gamma-k+2}
      \left \langle \psi_{k+2},  \partial_{{\tilde{\Gamma}}} v_{\cC}^{\tilde{b} {\tilde{\Gamma}}} \right\rangle_\cH =\frac{\|\psi_{k+2}\|^2_{\mathcal{H}}}{{2\sqrt{2(n-k)}}|\tau|}+O\left(\frac{\kappa}{|\tau|}\right).
 \end{equation}
\end{claim}

\begin{proof}[{Proof of Claim \ref{claim_gamma_der}}]
By the transformation formula  \eqref{bgammatransform} we have
\begin{equation}\label{partialtau}
    \partial_{\tilde{\Gamma}} v^{\tilde{b} {\tilde{\Gamma}}}({\bf y},\tau)=({1+\tilde{b}})\tau v_\tau \left(\frac{{\bf y}}{{1+\tilde{b}}}, (1+{\tilde{\Gamma}})\tau\right).
\end{equation}
Recall that the renormalized profile function evolves by
\begin{equation}
v_\tau=\mathcal{L}v - \frac{ D^2v\!:\! (D v\otimes D v)}{1+|D v|^2} -\frac{v}{2}-\frac{n-k}{v}.
\end{equation}
Together with integration by parts, this implies for $i\in \{k+1, k+2\}$ we have
\begin{multline}\label{dividing 1btau}
\frac{\left\langle \psi_i,  \chi_{\cC}(v^{\tilde{b} {\tilde{\Gamma}}}) \partial_{{\tilde{\Gamma}}} v^{\tilde{b} {\tilde{\Gamma}}} \right\rangle_\cH}{({1+\tilde{b}})\tau}
     =  \left\langle \mathcal{L}\left(  \chi_{\cC}(v^{\tilde{b} {\tilde{\Gamma}}}) \psi_i\right), v\right\rangle_\cH\\
     -\left\langle \psi_i ,\chi_{\cC}(v^{\tilde{b} {\tilde{\Gamma}}})\left(\frac{v}{2}+\frac{n-k}{v}\right)\right\rangle_\cH 
      - \left\langle \psi_i,\chi_{\cC}(v^{\tilde{b} {\tilde{\Gamma}}}) \frac{  D^2v\!:\! (D v\otimes D v)}{1+| D v|^2}  \right\rangle_\cH ,
\end{multline}
where the function $v$ is evaluated at $\big(\frac{{\bf y}}{{1+\tilde{b}}}, (1+{\tilde{\Gamma}})\tau\big)$ as usual.

We  can rewrite the first term using the product rule in the form
\begin{equation}
\mathcal{L}\left(  \chi_{\cC}(v^{\tilde{b} {\tilde{\Gamma}}}) \psi_i\right)=\chi_{\cC}(v^{\tilde{b} {\tilde{\Gamma}}}) \mathcal{L}   \psi_i+\psi_i(\mathcal{L}-1)\chi_{\cC}(v^{\tilde{b} {\tilde{\Gamma}}})+2 D \chi_{\cC}(v^{\tilde{b} {\tilde{\Gamma}}}) D \psi_i.
\end{equation}
Arguing as above, and using also that $\mathcal{L}\psi_i=\delta_{1i}$, this yields
\begin{equation}
 \left\langle \mathcal{L}\left( \chi_{\cC}(v^{\tilde{b} {\tilde{\Gamma}}}) \psi_i\right) , v\right\rangle_\cH= \left\langle  \chi_{\cC}(v^{\tilde{b} {\tilde{\Gamma}}})\delta_{1i} , v\right\rangle_\cH+O(|\tau|^{-10}).
\end{equation}

\bigskip
Next, we have to deal with the term
\begin{equation}
\left\langle \chi_{\cC}(v^{\tilde{b} {\tilde{\Gamma}}}), v\right\rangle_\cH-\left\langle \psi_{k+1} ,\chi_{\cC}(v^{\tilde{b} {\tilde{\Gamma}}})\left(\frac{v}{2}+\frac{n-k}{v}\right)\right\rangle_\cH=\left\langle \chi_{\cC}(v^{\tilde{b} {\tilde{\Gamma}}}),\frac{v}{2}-\frac{n-k}{v} \right\rangle_\cH.
\end{equation}
To estimate this, we start with the algebraic identity 
\begin{equation}
\frac{v}{2}-\frac{n-k}{v}=v-\sqrt{2(n-k)}-\frac{(v-\sqrt{2(n-k)})^2}{2v}.
\end{equation}
Using the tail bound \eqref{gaussian_tail} we see that
\begin{equation}
\left\langle \chi_{\cC}(v^{\tilde{b} {\tilde{\Gamma}}})-1,v-\sqrt{2(n-k)}\right\rangle_\cH=O(|\tau|^{-10}),
\end{equation}
and using $\langle 1,\psi_{k+2}\rangle_\cH = 0$ and \eqref{ubg_expansion} we can estimate
\begin{equation}
\left\langle 1,v-\sqrt{2(n-k)}\right\rangle_\cH=\left\langle 1,v-\sqrt{2(n-k)}+\frac{\sqrt{2(n-k)}\psi_{k+2}}{4|\tau|}\right\rangle_\cH\!\!=\!O\left(\frac{\kappa}{|\tau|}\right).
\end{equation}
Moreover, remembering that $\chi_\cC(v)=0$ for $v\leq \tfrac58\theta$ we infer that
\begin{equation}
\left|\left\langle \chi_{\cC}(v^{\tilde{b} {\tilde{\Gamma}}}),\frac{(v-\sqrt{2(n-k)})^2}{2v} \right\rangle_\cH\right|\leq \frac{2}{\theta}\left\| v-\sqrt{2(n-k)}\right\|^2_{\mathcal{H}}=O(|\tau|^{-2}).
\end{equation}
Combining the above observations shows that
\begin{equation}
\left\langle \chi_{\cC}(v^{\tilde{b} {\tilde{\Gamma}}}) , v\right\rangle_\cH-\left\langle \psi_{k+1} ,\chi_{\cC}(v^{\tilde{b} {\tilde{\Gamma}}})\left(\frac{v}{2}+\frac{n-k}{v}\right)\right\rangle_\cH=O\left(\frac{\kappa}{|\tau|}\right).
\end{equation}

\medskip

Next, we have to deal with the term
\begin{equation}
-\Big\langle \psi_{k+2},\chi_{\cC}(v^{\tilde{b} {\tilde{\Gamma}}})\left(\frac{v}{2}+\frac{n-k}{v}\right)\Big\rangle_\cH.
\end{equation}
To estimate this, we start with the algebraic identity 
\begin{equation}
\frac{v}{2}+\frac{n-k}{v}=\sqrt{2(n-k)}+\frac{u^2}{2\sqrt{2(n-k)}}-\frac{u^3}{2\sqrt{2(n-k)}v},
\end{equation}
where  $u=v-\sqrt{2(n-k)}$.
Using in particular the tail bound \eqref{gaussian_tail} we see that
\begin{equation}
\Big\langle \psi_{k+2} \chi_{\cC}(v^{\tilde{b} {\tilde{\Gamma}}}),\sqrt{2(n-k)}\Big\rangle_\cH=O(|\tau|^{-10}),
\end{equation}
and
\begin{equation}
\Big\langle \psi_{k+2} \left(\chi_{\cC}(v^{\tilde{b} {\tilde{\Gamma}}})-1\right),u^2\Big\rangle_\cH=O(|\tau|^{-10}).
\end{equation}
Also observe that 
\begin{equation}\label{blabla1}
\left\langle  1_{\{  y\leq 4\textrm{ or } y\geq |\tau|^{1/100} \}} ,\psi_{k+2}\left( u^2-\frac{(n-k)\psi_{k+2}^2}{8|\tau|^2}\right)\right\rangle_\cH=O\left(\frac{\kappa}{|\tau|^2}\right).
\end{equation}
On the other hand, abbreviating: $\chi= 1_{\{ 4\leq y\leq |\tau|^{1/100} \}}$, we can estimate
\begin{multline}\label{blabla2}
\left|\left\langle \chi,\psi_{k+2}\left( u^2-\frac{(n-k)\psi_{k+2}^2}{8|\tau|^2}\right)\right\rangle_\cH\right|\\
\leq \left\| \chi\psi_{k+2}^{1/2}\left(u-\frac{\sqrt{2(n-k)}\psi_{k+2}}{4|\tau|}\right)\right\|_\cH \cdot  \left\| \psi_{k+2}^{1/2}\left(u+\frac{\sqrt{2(n-k)}\psi_{k+2}}{4|\tau|}\right)\right\|_\cH .
\end{multline}
To proceed, we set
\begin{equation}\label{w_def}
w=\hat{u}+\frac{\sqrt{2(n-k)}\psi_{k+2}}{4|\tau|},
\end{equation}
where $\hat{u}$ is truncated via the graphical radius $\rho(\tau)=|\tau|^{1/100}$.
Note that
\begin{equation}
\|  w \|_\cH =O\left(\frac{\kappa}{|\tau|}\right) \quad \textrm{and}\quad
\| (\partial_\tau-\mathcal{L}) w \|_\cH =O(|\tau|^{-101/100}).
\end{equation}
Hence, arguing similarly as in the proof of \cite[Proposition 2.8]{CDDHS} or \cite[Lemma 3.2]{DZ_spectral_quantization}
 we infer that
\begin{equation}\label{der_bound_eq}
\|Dw \|_\cH =O\left(\frac{\kappa}{|\tau|}\right).
\end{equation}
Using also the weighted Poincar\'e inequality as in \cite[(2.4)]{CDZ_ovals}
 for function $f$
\begin{equation}\label{easy_Poincare}
\big\|  (1+|{\bf{y}}|) f \big\|_{\mathcal{H}} \leq C \big( \| f \|_{\mathcal{H}}+\| Df \|_{\mathcal{H}}\big).
\end{equation}
this implies
\begin{equation}
\left\| \chi \psi_{k+2}^{1/2}\left(u+\frac{\sqrt{2(n-k)}\psi_{k+2}}{4|\tau|}\right)\right\|_\cH=O\left(\frac{\kappa}{|\tau|}\right),
\end{equation}
and  in particular via the triangle inequality this also implies
\begin{equation}
\left\|\chi \psi_{k+2}^{1/2}\left(u-\frac{\sqrt{2(n-k)}\psi_{k+2}}{4|\tau|}\right)\right\|_\cH=O(|\tau|^{-1}).
\end{equation}
Together with \eqref{blabla1} and \eqref{blabla2}, and with $\langle \psi_{k+2},\psi_{k+2}^2\rangle_\cH = 8 \|\psi_{k+2}\|^2_{\mathcal{H}}$, this yields
\begin{equation}
\Big\langle \psi_{k+2} , u^2\Big\rangle_\cH=\frac{\|\psi_{k+2}\|^2_{\mathcal{H}}}{|\tau|^2}+O\left(\frac{\kappa}{|\tau|^2}\right).
\end{equation}
Moreover, using the graphical radius condition from \eqref{condition3} we see that
\begin{equation}
\left| \Big\langle \psi_{k+2}1_{\{y\leq |\tau|^{1/100}\}},\frac{u^3}{v} \Big\rangle_\cH \right|\leq C \langle |\psi_{k+2}|,u^2\rangle_\cH \max_{y\leq |\tau|^{1/100}} |u|=O\left(|\tau|^{-\frac{101}{50}}\right).
\end{equation}
Note also that we have the tail estimate
\begin{equation}
\Big\langle \psi_{k+2}\chi_{\cC}(v^{\tilde{b} {\tilde{\Gamma}}})1_{\{y\geq |\tau|^{1/100}\}},\frac{u^3}{v} \Big\rangle_\cH =O(|\tau|^{-10}).
\end{equation}
Combining the above observations shows that
\be
-\left\langle \psi_{k+2} ,\chi_{\cC}(v^{\tilde{b} {\tilde{\Gamma}}})\left(\frac{v}{2}+\frac{n-k}{v}\right)\right\rangle_\cH
=-\frac{\|\psi_{k+2}\|^2_{\mathcal{H}}}{{2\sqrt{2(n-k)}}|\tau|^2}+O\left(\frac{\kappa}{|\tau|^2}\right).
\ee

\medskip

Moreover, by the derivative estimates from \cite[Lemma 4.16]{CDDHS} for estimating $|D^2v|$ and using \eqref{der_bound_eq} and definition of $w$ in \eqref{w_def} and Gaussian tail estimates for estimates $\|1_{|y|\leq |\tau|^{1/100}}Dv\|^{-1}_{\mathcal{H}}$,  we have the estimate 
\begin{equation}
 \left\langle 1_{\{y\leq |\tau|^{1/100}\}}, |\psi_i| |D^2v| |Dv|^2 \right\rangle_\cH =O\left(|\tau|^{\frac{1}{50}-\frac{5}{2}}\right).
\end{equation}
and
\begin{equation}
 \left\langle 1_{\{y\geq |\tau|^{1/100}\}},\psi_i \chi_{\cC}(v^{\tilde{b} {\tilde{\Gamma}}}) \frac{  D^2v\!:\! (D v\otimes D v)}{1+| D v|^2}  \right\rangle_\cH  = O(|\tau|^{-10}).
\end{equation}
This shows that
\be
\left\langle \psi_i,\chi_{\cC}(v^{\tilde{b} {\tilde{\Gamma}}}) \frac{  D^2v\!:\! (D v\otimes D v)}{1+| D v|^2}  \right\rangle_\cH=O\left(|\tau|^{-\frac{49}{20}}\right).
\ee

Furthermore, observe that
\begin{equation}
\left\langle \psi_i,\partial_{\tilde{\Gamma}} v_{\cC}^{{\tilde{b}\tilde{\Gamma}}}\right\rangle_\cH=\left\langle \psi_i,\chi_{\cC}(v^{{\tilde{b}\tilde{\Gamma}}})\partial_{\tilde{\Gamma}} v^{{\tilde{b}\tilde{\Gamma}}}\right\rangle_\cH+\left\langle \psi_i,v^{{\tilde{b}\tilde{\Gamma}}}\chi_{\cC}'(v^{{\tilde{b}\tilde{\Gamma}}})\partial_{\tilde{\Gamma}} v^{{\tilde{b}\tilde{\Gamma}}}\right\rangle_\cH,
\end{equation}
and
\begin{equation}
\left\langle \psi_i,v^{{\tilde{b}\tilde{\Gamma}}}\chi_{\cC}'(v^{{\tilde{b}\tilde{\Gamma}}})\partial_{\tilde{\Gamma}} v^{{\tilde{b}\tilde{\Gamma}}}\right\rangle_\cH=O(|\tau|^{-10}).
\end{equation} 
Recalling \eqref{dividing 1btau}, combining the above estimates and multiplying $({1+\tilde{b}})\tau$ with $|\tilde{b}\tau|\leq 10\kappa$, the Claim \ref{claim_gamma_der} (${\tilde{\Gamma}}$-derivatives) follows.
\end{proof}
 
Finally, combining Claim \ref{claim_b_der} ($\tilde b$-derivatives) and Claim \ref{claim_gamma_der} (${\tilde{\Gamma}}$-derivatives) we conclude that
\begin{equation}
    \mathrm{det}(J\Psi_\tau)=\frac{\|\psi_{k+1}\|^2_{\mathcal{H}}\|\psi_{k+2}\|^2_{\mathcal{H}}}{2|\tau|}+O\left(\frac{\kappa}{|\tau|}\right).
\end{equation}
This proves the proposition.
\end{proof}

Given $\kappa>0$ and $\tau_0>-\infty$, we let $
 \mathcal{A}'_{\kappa}(\tau_0)$ be the set of ancient ovals whose element $\mathcal{M}$ is $\kappa$-quadratic at time $\tau_0$,  and $\exists \mathcal{N}\in\mathcal{A}^{k, \circ}$, $\exists\beta,\gamma$: $\mathcal{M}=\mathcal{N}^{\beta,\gamma}$ and satisfies 
\begin{equation}\label{quadractic_mode1}
 \Big\langle v^{\mathcal{M}}_{\cC}(\tau_0) +\frac{\sqrt{2(n-k)}(|\mathbf{y}|^2-2k)}{4|\tau_0|},|\mathbf{y}|^2-2k\Big\rangle_\cH=0,
\end{equation}
\begin{equation}\label{positive_mode1}
\fp_+ \big(v^{\mathcal{M}}_\cC(\tau_0)-\sqrt{2(n-k)}\big)=0.
\end{equation}
\begin{corollary}[local continuous orthogonality]\label{prop_continuous_orthogonality}
For any $k$-oval $\mathcal{M}_0$ in $\mathbb{R}^{n+1}$ and any $\kappa>0$, there exists  constants $\tau_{\ast}={\tau}_\ast(\mathcal{M}_0,\kappa)>-\infty$,  with the following significance. For every $\tau_0\leq {\tau}_{\ast}$, there is an open neighborhood $U(\mathcal{M}_0, \kappa, \tau_0)$ of  $\mathcal{M}_0$ depending on $\mathcal{M}_0, \kappa, \tau_0$ such that for every $\mathcal{M}\in U(\mathcal{M}_0, \kappa, \tau_0)$,  there exist  $\beta\in \mathbb{R}, \gamma\in \mathbb{R}$  depending on $\kappa, \tau_0$ and continuously depending $\mathcal{M}\in U(\mathcal{M}_0, \kappa, \tau_0)$, such that the truncated renormalized profile function $v^{\beta, \gamma}_{\cC}$ of the time-shifted and parabolically rescaled flow $\mathcal{M}^{\beta, \gamma}$  is $\kappa$-quadratic at time $\tau_0$ and satisfies
\begin{equation}\label{quadractic_mode2}
 \Big\langle v^{\beta, \gamma}_{\cC}(\tau_0) +\frac{\sqrt{2(n-k)}(|\mathbf{y}|^2-2k)}{4|\tau_0|},|\mathbf{y}|^2-2k\Big\rangle_\cH=0,
\end{equation}
\begin{equation}\label{positive_mode2}
\fp_+ \big(v^{\beta, \gamma}_\cC(\tau_0)-\sqrt{2(n-k)}\big)=0.
\end{equation}
Moreover, the transformation $\mathcal{M}^{\beta, \gamma}$ gives a homeomorphism from $U(\mathcal{M}_0, \kappa, \tau_0)$ to its image  $U'(\mathcal{M}_0, \kappa, \tau_0)$.
\end{corollary}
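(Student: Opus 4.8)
The plan is to produce $(\beta(\mathcal{M}),\gamma(\mathcal{M}))$ by an implicit function theorem applied to the two-dimensional map $\Psi_{\tau_0}$ of \eqref{Psi=00}, with the Jacobian estimate of Proposition \ref{JPhiestimates} as the nondegeneracy input and the flow $\mathcal{M}$ playing the role of a continuous parameter. Working within the class of $\mathbb{Z}_2^k\times O(n-k+1)$-symmetric flows, to which the $k$-ovals under consideration belong, the spectral components of $v_\cC$ along the $y_i$ and the $y_iy_j$ vanish identically; hence \eqref{cross} is automatic and imposing \eqref{quadractic_mode2} together with \eqref{positive_mode2} is exactly the requirement $\Psi_{\tau_0}(\tilde b,\tilde\Gamma)=0$, where $(\tilde b,\tilde\Gamma)$ is the reparametrization \eqref{bgamma} of the time-shift and dilation parameters $(\beta,\gamma)$. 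Applying Proposition \ref{prop_orthogonality} to $\mathcal{M}_0$ with $\eta=0$ and with $\kappa$ replaced by $\kappa/2$ --- for a symmetric flow the normalizing transformation may be taken to be a pure time-shift and dilation --- and composing the whole local picture with this fixed transformation, we may assume that $\mathcal{M}_0$ is $\tfrac{\kappa}{2}$-quadratic at $\tau_0$ and that $\Psi_{\tau_0}^{\mathcal{M}_0}(0,0)=0$.

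Next I would study the map $F(\mathcal{M};\tilde b,\tilde\Gamma):=\Psi_{\tau_0}(\tilde b,\tilde\Gamma)$ associated with the flow $\mathcal{M}$, for $\mathcal{M}$ near $\mathcal{M}_0$ and $(\tilde b,\tilde\Gamma)$ near $(0,0)$. One verifies that $F$ is continuous in $\mathcal{M}$ with respect to local smooth convergence and $C^1$ in $(\tilde b,\tilde\Gamma)$, with $\partial_{(\tilde b,\tilde\Gamma)}F$ likewise continuous in $\mathcal{M}$: if $\mathcal{M}_i\to\mathcal{M}$ locally smoothly then the transformed flows converge locally smoothly, the truncated renormalized profiles converge on the graphical region (whose radius is controlled uniformly by $\kappa$-quadraticity, a property preserved under local smooth limits), and the $\cH$-pairings converge because the Gaussian weight together with the tail bound \eqref{gaussian_tail} localizes the integrals; the derivative formulas obtained in the proof of Proposition \ref{JPhiestimates} (integrals of $v$ and $Dv$ against fixed functions, with errors controlled through \eqref{gaussian_tail}, \eqref{grad_est_Sec5} and the graphical radius) are handled identically. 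By Proposition \ref{JPhiestimates}, $\det J\Psi_{\tau_0}=\tfrac{\|\psi_{k+1}\|^2\|\psi_{k+2}\|^2}{2|\tau_0|}+O(\kappa/|\tau_0|)>0$ uniformly over all $\kappa$-quadratic flows and over the region $|\tau_0|^2\tilde b^2+\tilde\Gamma^2\le 100\kappa^2$, so in particular $\partial_{(\tilde b,\tilde\Gamma)}F$ is invertible at $(\mathcal{M}_0;0,0)$. A parametrized implicit function theorem --- the usual Newton/contraction scheme in the variables $(\tilde b,\tilde\Gamma)$ with $\mathcal{M}$ an external parameter, the uniform Jacobian bounds and the uniform modulus of continuity in $\mathcal{M}$ ensuring the contraction is uniform --- then yields an open neighborhood $U=U(\mathcal{M}_0,\kappa,\tau_0)$ of $\mathcal{M}_0$ and continuous functions $(\tilde b(\mathcal{M}),\tilde\Gamma(\mathcal{M}))$, equivalently $(\beta(\mathcal{M}),\gamma(\mathcal{M}))$, solving $\Psi_{\tau_0}=0$ and unique in a fixed small ball about the origin.

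It then remains to collect the routine consequences. That $\mathcal{M}^{\beta(\mathcal{M}),\gamma(\mathcal{M})}$ is $\kappa$-quadratic at $\tau_0$ follows because $\mathcal{M}_0$ is $\tfrac{\kappa}{2}$-quadratic and both the $\cH$-bound \eqref{condition1intro} and the graphical radius bound \eqref{condition3} are stable under local smooth convergence and under the small transformations just produced, so after shrinking $U$ the quadraticity constant stays below $\kappa$; the orthogonality relations \eqref{quadractic_mode2} and \eqref{positive_mode2} hold by construction. Finally, $\Phi\colon\mathcal{M}\mapsto\mathcal{M}^{\beta(\mathcal{M}),\gamma(\mathcal{M})}$ is continuous (a composition of continuous maps), and on its image the inverse map is the corresponding inverse time-shift and dilation, which is again continuous; since the group of time-shifts and dilations acts freely on non-selfsimilar ancient solutions, after restricting $U$ to a local transversal to the orbits of this action $\Phi$ becomes injective, and we obtain the asserted homeomorphism $U\to U'=\Phi(U)$.

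The step I expect to be the main obstacle is the parametrized implicit function theorem itself, since the parameter $\mathcal{M}$ ranges over the metric space of flows with the local smooth topology rather than a Banach space: one must check by hand that the contraction estimates are uniform for $\mathcal{M}$ near $\mathcal{M}_0$. This reduces to the uniform Jacobian bound of Proposition \ref{JPhiestimates} together with the joint continuity in $\mathcal{M}$, uniform over the small parameter region, of the functionals $\langle\psi_i,v^{\tilde{b}\tilde{\Gamma}}_\cC(\tau_0)\rangle_\cH$ and of their $(\tilde b,\tilde\Gamma)$-derivatives --- which is precisely where the Gaussian tail estimate \eqref{gaussian_tail}, the gradient bound \eqref{grad_est_Sec5} and the graphical radius control must be combined with some care; arranging $U$ so that $\Phi$ is injective is a secondary point.
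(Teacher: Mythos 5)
Your proposal is correct in outline and rests on the same analytic core as the paper, namely the expansion of the transformed profile and the Jacobian estimates of Proposition \ref{JPhiestimates}, but it replaces the paper's key mechanism by a different one. The paper does not invoke an implicit function theorem: it shows that on the boundary of the anisotropic disc $D=\{|\tau_0|^2\tilde b^2+\tilde\Gamma^2\le 100(\kappa')^2\}$ the map $\Psi_{\tau_0}$ of \eqref{Psi=00} is homotopic, through maps avoiding the origin, to the explicit model $(b,\Gamma)\mapsto\bigl(\sqrt{2(n-k)}\|\psi_{k+1}\|^2\, b,\ \tfrac{\sqrt{2(n-k)}\|\psi_{k+2}\|^2}{4|\tau_0|}\tfrac{\Gamma}{1+\Gamma}\bigr)$, so $\mathrm{deg}(\Psi|_D)=1$; positivity of $\det J\Psi$ on $D$ then forces exactly one zero, and continuity of $(\beta,\gamma)$ in $\mathcal{M}$ follows from this uniqueness. (It also gets the approximate normalization of $\mathcal{M}_0$ from strong quadraticity via \cite[Theorem 5.1]{DZ_spectral_quantization} rather than from Proposition \ref{prop_orthogonality} with $\eta=0$, and it secures $\kappa$-quadraticity of the transformed flow by running everything at level $\kappa'=\kappa/C$; these differences are cosmetic.) Your Newton/contraction route can be made to work from the same inputs, but the real delicacy is not the one you flag --- the parameter $\mathcal{M}$ living in a non-Banach space is harmless, since a uniform family of contractions with continuously varying data has a continuously varying fixed point --- it is the anisotropy of the linearization: by \eqref{1b}--\eqref{2g} the Jacobian has smallest singular value of order $|\tau_0|^{-1}$ while the $(\psi_{k+1},\tilde\Gamma)$-entry is only $O(\kappa)$, so ``$J$ uniformly close to a fixed matrix'' fails in the naive scaled variables. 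One must iterate with a fixed approximate inverse $A^{-1}$ built from the leading terms (for which $\|I-A^{-1}J\|=O(\kappa)$ does hold) and shrink $U$ \emph{after} $\tau_0$ is fixed so that $|\Psi^{\mathcal{M}}_{\tau_0}(0,0)|\lesssim\kappa|\tau_0|^{-2}$, to keep the iterates inside $D$, where alone Proposition \ref{JPhiestimates} applies. The quantifier order of the statement permits this, so your route closes, but it buys nothing over the degree argument, which needs only boundary information plus $\det J\Psi>0$ and is insensitive to this anisotropy.

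Two caveats. First, your last step restricts $U$ to a ``local transversal to the orbits'' of the shift--dilation group to obtain injectivity, which proves a weaker statement than the asserted homeomorphism from the open neighborhood $U$ itself; note that free action alone does not give injectivity, since $\mathcal{M}\mapsto\mathcal{M}^{\beta(\mathcal{M}),\gamma(\mathcal{M})}$ is constant along small pieces of shift--dilation orbits. In the setting where the corollary is used (neighborhoods inside the normalized class, as in $\mathcal{A}^{k,\circ}$, whose elements become extinct at the space-time origin and satisfy the Gaussian-density normalization at $t=-1$), distinct elements are never related by a nontrivial time-shift and parabolic dilation, so injectivity holds on all of $U$ without passing to a transversal; that is the argument to make instead. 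Second, your reduction of \eqref{positive_mode2} to the two scalar equations of $\Psi_{\tau_0}$ uses that the linear modes $\langle v_\cC,y_i\rangle_\cH$ vanish identically for every flow in $U$, i.e.\ that all flows in $U$ are $\mathbb{Z}_2^k$-symmetric with respect to the same coordinate axes; this matches the paper's implicit convention, but it should be stated explicitly, since with only the two parameters $(\beta,\gamma)$ the $y_i$-modes cannot be normalized away otherwise.
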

\begin{proof}
    By \cite[Theorem 5.1]{DZ_spectral_quantization}, for any $\kappa'=\frac{\kappa}{C}>0$ where the large positive constant $C<+\infty$ will be determined later, there exists $\tau_{\ast}>-\infty$, such that $\mathcal{M}_0$ is strongly $\kappa'$-quadratic from time $\tau_\ast$. In particular, for all $\tau\leq\tau_\ast$ we have its profile function satisfies
\begin{equation}
    \left\| v_{0, \cC}(\mathbf{y},  \tau)-\sqrt{2(n-k)}+\frac{\sqrt{2(n-k)}(|\mathbf{y}|^2-2k)}{4|\tau|}  \right\|_{\mathcal{H}}\leq \frac{\kappa'}{|\tau|} .
\end{equation}
  By this and Gaussian tail estimates, for $0<\kappa'=\frac{\kappa}{C}\ll \kappa$ and $\tau_0\leq 2\tau_*$  negative enough we can set $U(\mathcal{M}_0, \kappa, \tau_0)$ to be a open neighborhood in $k$-oval space endowed with the local smooth convergence topology such that  the truncated profile function $v_{\cC}$ of every $\mathcal{M}\in U(\mathcal{M}_0, \kappa, \tau_0)$    satisfies
  \begin{equation}
    \left\| v_{\cC}(\mathbf{y},  \tau)-\sqrt{2(n-k)}+\frac{\sqrt{2(n-k)}(|\mathbf{y}|^2-2k)}{4|\tau|}  \right\|_{\mathcal{H}}< \frac{2\kappa'}{|\tau|} \quad \tau\in [2\tau_0, \tau_0/2].
\end{equation}

Thus, using the transformation formula  for the profile function $v$ of $\mathcal{M}\in U(\mathcal{M}_0, \kappa, \tau_0)$
\begin{equation}
  b= \sqrt{1+\beta e^{\tau}}-1,\quad \Gamma=\frac{\gamma-\ln (1+\beta e^{\tau})}{\tau}.
\end{equation}
\begin{align}
     v^{b\Gamma}(y,\tau)=(1+b)v\left(\frac{y}{1+b},(1+\Gamma)\tau\right)
\end{align}
and standard Gaussian tail estimates, for all $(b, \Gamma)\in [-1/|\tau|,1/|\tau|]\times [-1/2,1/2]$ and $\tau\in [2\tau_0, \tau_0/2]$ we get
\begin{equation}\label{transformed profile expansion}
 \left\| v^{b\Gamma}_{\cC}(y,\tau)-\sqrt{2(n-k)} - \sqrt{2(n-k)}b
+\frac{\sqrt{2(n-k)}(|y|^2-2k)}{4|\tau|(1+\Gamma)}\right\|_\cH \leq \frac{10\kappa'}{|\tau|}.
\end{equation}
Hence, for $\kappa'>0$ small enough and $\tau_0\leq\tau_\ast$ negative enough, the map $\Psi({b}, {{\Gamma}})$
\begin{equation}
\left(  \Big\langle \psi_{k+1} ,v^{{{b},{\Gamma}}}_\cC(\tau)-\sqrt{2(n-k)} \Big\rangle_\cH , 
\Big\langle  \psi_{k+2}, v^{{{b}, {\Gamma}}}_\cC(\tau) +\frac{\sqrt{2(n-k)}\psi_{k+2}}{4|\tau|}\Big\rangle_\cH \right)
\end{equation}
is homotopic to 
\begin{equation}
(b,\Gamma)\mapsto \left(
     \sqrt{2(n-k)}\|\psi_{k+1}\|^2\,  b,
     \frac{\sqrt{2(n-k)}\|\psi_{k+2}\|^2}{4|\tau|}\frac{\Gamma}{(1+\Gamma)}
    \right),
\end{equation}
when restricted to the boundary of the disc
\begin{equation}\label{Dk2}
D=D_{\kappa', \tau}:=\left\{(b, \Gamma)\in\mathbb{R}^2\;:\; |\tau|^2b^2+\Gamma^2\leq 100 (\kappa')^2  \right\},
\end{equation}
where the homotopy can be chosen through maps avoiding the origin. Hence,
\begin{equation}\label{degree=1}
    \mathrm{deg}(\Psi|_D)=1.
\end{equation}
On the other hand, by Proposition \ref{JPhiestimates} (Jacobian estimate), as long as we choose $\kappa>0$ small enough and $\tau_0\leq\tau_\ast$ negative enough, we have
\begin{equation}
\mathrm{det}(J\Psi|_D)>0.
\end{equation}
In particular, $(0, 0)$ is a regular value. Recalling also that degree is given by counting the inverse images according to the sign of their Jacobian, we thus infer that for any $\mathcal{M}\in U(\mathcal{M}_0, \kappa, \tau_*)$, there exists a 
 unique $(b, \Gamma)\in D$ such that $\Psi(b, \Gamma, \mathcal{M})=0$. By uniqueness, $(b,\Gamma)$ and thus the corresponding $(\beta,\gamma)$, depends continuously on $\mathcal{M}\in U(\mathcal{M}_0, \kappa, \tau_0)$. Because the transformation is invertible and the inverse is also continuous in term of local smooth convergence topology, therefore the transformation $\mathcal{M}^{\beta, \gamma}$ gives a homeomophism from $U(\mathcal{M}_0, \kappa, \tau_0)$
 to its image $U'(\mathcal{M}_0, \kappa, \tau_0)$. By \eqref{transformed profile expansion}, if we make an expansion and let $C<\infty$ large enough in the definition of $\kappa'=\frac{\kappa}{C}>0$, we can make $\mathcal{M}^{\beta, \gamma}$ be $\kappa$-quadratic at $\tau_0$. This finishes the proof of the corollary.
\end{proof}

The following proposition asserts we can homeomophically shift the ancient solutions in $\mathscr{A}$ such that they have vanishing positive mode.
\begin{proposition}[shift map]\label{time-shift}
Given any $\tau_0<0$, for any ancient oval $\mathcal{M}\in \mathscr{A}$, we can find a unique $\beta=\beta(\mathcal{M}, \tau_0)$ such that the profile function of time shifted oval $M_{t-\beta}$ satisfies 
\begin{equation}\label{shift vanish +}
\mathfrak{p}_{+}\big(v^{\beta}_{\cC}(\cdot , \tau_{0})-\sqrt{2(n-k)}\big)=0,
\end{equation}
Let $\mathcal{S}$ be time shift map on $\mathcal{A}$ which is a homeomorphism from 
\begin{equation}
    \mathscr{A} = \cup_{\mu}\mathcal{A}_\mu
\end{equation}
to its image
\begin{equation}\label{shifted oval class}
    \mathscr{A}'=\mathcal{S}(\mathscr{A})
\end{equation}
and it induces restriction $\mathcal{S}_\mu$ a time shift map from $\mathcal{A}_\mu$, we have $\mathcal{S}_\mu$ is a homeomorphism $\mathcal{S}$ from $\mathcal{A}_\mu$ to its image
\begin{equation}
\mathcal{A}'_\mu=\mathcal{S}_\mu(\mathcal{A}_\mu)
\end{equation}
Moreover, for any $\kappa>0$, there is a $\kappa'>0$, if $\mathcal{M}\in \mathscr{A}$ is strongly $\kappa'$-quadratic from time $\tau_0+1$, then $\mathcal{S}(\mathcal{M}) = \mathcal M^\beta$ is $\kappa$-quadratic at time $\tau_0$.
\end{proposition}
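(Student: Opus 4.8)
\textbf{Proof proposal for Proposition \ref{time-shift} (shift map).}

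The plan is to construct $\beta(\mathcal M,\tau_0)$ by a one-dimensional version of the degree/uniqueness argument already used in Corollary \ref{prop_continuous_orthogonality}, restricted to the pure time-shift parameter. First I would fix $\mathcal M\in\mathscr A$ and note that shifting in time alone corresponds to $\gamma=0$, so in the notation of \eqref{bgamma}--\eqref{bgammatransform} with $\tilde\beta=\beta$ we have $\tilde b=\sqrt{1+\beta e^{\tau}}-1$ and $\tilde\Gamma=-\ln(1+\beta e^{\tau})/\tau$, i.e. $\tilde\Gamma=-\tau^{-1}\ln(1+\tilde b)^2$ is determined by $\tilde b$; thus the two-parameter family $(\tilde b,\tilde\Gamma)$ collapses to a single curve parametrized by $\beta$. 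Since $\mathfrak p_+(v_\cC-\sqrt{2(n-k)})$ lives in $\mathcal H_+=\mathrm{span}\{1,y_1,\dots,y_k\}$, and the $\mathbb Z_2^k$-symmetry of elements of $\mathscr A$ (inherited from $\mathcal A^{k,\circ}_\mu$ and preserved under time-shift) kills the $y_i$-components, condition \eqref{shift vanish +} is really the scalar equation
\begin{equation}
g(\beta):=\big\langle \psi_{k+1}, v^{\beta}_\cC(\cdot,\tau_0)-\sqrt{2(n-k)}\big\rangle_\cH=0 .
\end{equation}
So the whole proposition reduces to: $g$ has a unique zero, which depends continuously (indeed homeomorphically after accounting for $\mu$) on $\mathcal M$, and a quantitative $\kappa$-quadraticity bound is preserved.

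Next I would establish existence and uniqueness of the zero of $g$. For uniqueness I would compute $g'(\beta)$: by the chain rule $\partial_\beta = (\partial_\beta \tilde b)\,\partial_{\tilde b} + (\partial_\beta\tilde\Gamma)\,\partial_{\tilde\Gamma}$ along the curve, where at a zero we expect $|\tilde b|,|\tilde\Gamma|$ small, and then invoke the Jacobian estimates \eqref{1b} and \eqref{1g} from Proposition \ref{JPhiestimates}: the leading term of $\langle\psi_{k+1},\partial_{\tilde b}v_\cC^{\tilde b\tilde\Gamma}\rangle_\cH$ is $\sqrt{2(n-k)}\|\psi_{k+1}\|^2>0$ while the $\tilde\Gamma$-derivative contributes only $O(\kappa)$, and $\partial_\beta\tilde b = \tfrac12 e^{\tau}(1+\beta e^\tau)^{-1/2}>0$; hence $g'(\beta)>0$ for $\tau_0$ sufficiently negative and $\beta$ in the relevant range, giving strict monotonicity and at most one zero. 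For existence I would use that $\mathcal A_\mu$ is sequentially compact together with the uniform sharp asymptotics of \cite{CDZ_ovals}/\cite{DZ_spectral_quantization}: every $\mathcal M\in\mathscr A$ is strongly $\kappa'$-quadratic from some time, so for $\tau_0$ negative enough $g(0)=O(\kappa'/|\tau_0|)$ is already tiny, and by the transformation expansion (analogous to \eqref{transformed profile expansion}) $g$ changes sign on an interval $|\tilde b\tau_0|\le 10\kappa$ — equivalently $|\beta|\le C\kappa e^{-\tau_0}/|\tau_0|$ — because the leading behavior $g(\beta)\approx \sqrt{2(n-k)}\|\psi_{k+1}\|^2\,\tilde b(\beta)$ dominates the error there; so a zero exists by the intermediate value theorem (this is the $1$-dimensional analogue of $\mathrm{deg}=1$ in \eqref{degree=1}).

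Then I would address continuity and the homeomorphism claims. Since the zero is unique and $g$ depends continuously on $\mathcal M$ in the local smooth topology (the pairing $\langle\psi_{k+1},v^\beta_\cC(\tau_0)-\sqrt{2(n-k)}\rangle$ is continuous in $(\beta,\mathcal M)$, using the uniform graphical radius control to handle the truncation $\chi_\cC$ and the Gaussian tail), the implicit function theorem (using $g'\ne 0$) gives $\beta=\beta(\mathcal M,\tau_0)$ continuous on $\mathscr A$. The map $\mathcal S(\mathcal M)=\mathcal M^{\beta(\mathcal M,\tau_0)}$ is then continuous; it is injective because distinct $\mathcal M$ with the same image would have to be time-translates of each other, but a time-translate of an element of $\mathscr A$ that still satisfies \eqref{shift vanish +} at $\tau_0$ must be itself, by the uniqueness just proved; and its inverse $\mathcal M'\mapsto (\mathcal M')^{-\beta}$ with the corresponding shift parameter is continuous for the same reasons, so $\mathcal S$ is a homeomorphism onto $\mathscr A'$. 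Restricting to a fixed $\mu$: pure time-shift commutes with the parabolic-scaling identification $\mathcal D_\gamma$ only up to rescaling $\beta\mapsto e^{\gamma}\beta$, but it maps $\mathcal A_\mu$ into the space of flows with extinction time shifted by $\beta$; one checks $\mathcal A_\mu$ is preserved as a set modulo this, so $\mathcal S_\mu:=\mathcal S|_{\mathcal A_\mu}$ is the claimed homeomorphism onto $\mathcal A'_\mu$, and $\mathscr A'=\cup_\mu \mathcal A'_\mu$. Finally, for the last sentence: if $\mathcal M$ is strongly $\kappa'$-quadratic from time $\tau_0+1$, then in particular $g(0)=O(\kappa'/|\tau_0|)$, so by monotonicity $|\tilde b(\beta)\tau_0|=O(\kappa')$, hence $|\beta|e^{\tau_0}=O(\kappa')$; plugging this small shift into the transformation formula \eqref{bgammatransform} (with $\gamma=0$) and the expansion \eqref{transformed profile expansion}-type estimate shows
\begin{equation}
\Big\| v^\beta_\cC(\cdot,\tau_0)-\sqrt{2(n-k)}+\tfrac{\sqrt{2(n-k)}}{4|\tau_0|}(|\mathbf y|^2-2k)\Big\|_\cH \le \tfrac{C\kappa'}{|\tau_0|},
\end{equation}
together with the graphical-radius condition \eqref{condition3}, which is preserved under a shift this small because the admissible radius changes only mildly; choosing $\kappa'=\kappa/C$ makes $\mathcal S(\mathcal M)=\mathcal M^\beta$ $\kappa$-quadratic at $\tau_0$.

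The main obstacle I anticipate is not the monotonicity/existence step, which is a routine $1$-dimensional reduction of Proposition \ref{JPhiestimates}, but rather the bookkeeping for the homeomorphism at the level of the stratified space $\mathscr A=\cup_\mu \mathcal A_\mu$: one must verify that the time-shift, which does \emph{not} commute with the $\mu$-rescaling in a naive way and which a priori changes the extinction time, nonetheless sends $\mathcal A_\mu$ onto a well-defined space $\mathcal A'_\mu$ and does so compatibly across different $\mu$, and that the inverse is continuous uniformly near the boundary strata (where some eccentricities degenerate). Handling the truncation operator $\chi_\cC$ consistently as $\mathcal M$ varies — so that all the pairings are genuinely continuous in the local smooth topology rather than only lower/upper semicontinuous — is the other place where care is needed, and this is where the uniform graphical radius bound from strong $\kappa$-quadraticity and the Gaussian tail estimate \eqref{gaussian_tail} do the essential work.
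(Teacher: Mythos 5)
There is a genuine gap in your existence/uniqueness step. The proposition asks for a \emph{global} statement: for \emph{any} $\tau_0<0$ (not only $\tau_0$ very negative), for \emph{every} $\mathcal{M}\in\mathscr{A}$ (including boundary elements of the closure, which need not be $\kappa$-quadratic at $\tau_0$, or at all), there is a \emph{unique} $\beta\in\mathbb{R}$ solving the scalar equation $g(\beta)=0$. Your argument runs everything through Proposition \ref{JPhiestimates}, but those Jacobian estimates are perturbative: they require $\mathcal{M}$ to be $\kappa$-quadratic at $\tau_0\leq\tau_\ast$ and hold only in the window $|\tau|^2\tilde b^2+\tilde\Gamma^2\leq 100\kappa^2$. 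So your monotonicity of $g$ is established only for small $\tilde b$ near a flow already close to the cylinder; it gives at most a unique zero in that window and says nothing about zeros of $g$ for large $|\beta|$ (the regime where the shifted time approaches the extinction time and the truncated profile degenerates), nor does it apply at all when $\tau_0$ is only mildly negative or when $\mathcal{M}$ is a degenerate limit in $\mathscr{A}\setminus\mathscr{A}^{\circ}$. The paper's proof avoids this entirely by a global geometric input you do not use: the convexity of the space-time track $\bigcup_{t<0}\{M_t\}$ (citing the space-time convexity theorem), which makes $\beta\mapsto v^{\beta}(\mathbf{y},\tau_0)$ pointwise monotone (and convex) in $\beta$ for all $\beta$, hence $\beta\mapsto\langle v^{\beta}_{\cC}-\sqrt{2(n-k)},1\rangle_{\mathcal H}$ globally monotone with limits of opposite sign as $\beta\to\pm\infty$; existence and uniqueness then follow for every $\tau_0<0$ and every $\mathcal{M}\in\mathscr{A}$ with no quadraticity hypothesis. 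Without such a global monotonicity (or some substitute ruling out extra zeros outside your small window), your IVT/degree argument does not prove the stated proposition.

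Two smaller points. First, your injectivity argument is off: if $\mathcal S(\mathcal M_1)=\mathcal S(\mathcal M_2)$ then $\mathcal M_2$ is a time-translate of $\mathcal M_1$, but $\mathcal M_2$ itself is not required to satisfy \eqref{shift vanish +}, so "uniqueness of the zero" does not directly force $\mathcal M_1=\mathcal M_2$; the paper instead uses that all elements of $\mathscr A$ become extinct exactly at $t=0$, so no two distinct elements are time-translates of one another (and continuity of $\mathcal S^{-1}$ is read off from continuity of the extinction time, not from an implicit function theorem). Second, your continuity of $\beta(\mathcal M,\tau_0)$ via the implicit function theorem again needs the perturbative Jacobian estimates; the paper gets continuity on all of $\mathscr A$ from compactness plus uniqueness of the zero, which is the argument you should adopt once global uniqueness is in hand. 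Your final step (strong $\kappa'$-quadraticity from $\tau_0+1$ implies $|b|\lesssim\kappa'/|\tau_0|$ and hence $\kappa$-quadraticity of $\mathcal M^{\beta}$ at $\tau_0$ after choosing $\kappa'$ small) matches the paper and is fine.
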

\begin{proof}
By reflection symmetry, the projection on $y_i$ are vanishing. Recall that $M_{t-\beta}=\{({\bf{x}}, V({\bf{x}}, t-\beta))\}$, where $V$ is the profile function of $M_t$, and the renormalized flow $e^{\frac{\tau}{2}} {M}_{-e^{-\tau}-\beta}=\{(e^{\frac{\tau}{2}}{\bf{x}}, e^{\frac{\tau}{2}}V({\bf{x}}, -e^{-\tau}-\beta))\}$, let ${\bf{y}}=e^{\frac{\tau}{2}}{\bf{x}}$, the renormalized flow of shifted $k$-oval is the graph of
\begin{equation}\label{timeshiftedgraph}
    v^{\beta}({\bf{y}}, \tau)=e^{\frac{\tau}{2}}V(e^{-\frac{\tau}{2}}{\bf{y}}, -e^{-\tau}-\beta)
\end{equation}
over some compact domain depending on $\tau$.  By the backward expanding convex ancient $k$-oval $\mathcal M_t$ and the convexity of space-time track $\bigcup_{t<0} \{M_{t}\}$ from \cite[Theorem 2.1]{space-time-convex}, for fixed $\tau_0<0$, the function
\begin{equation}
\beta\mapsto v^{\beta}({\bf{y}} , \tau_{0})
\end{equation}
is convex nonnegative strictly increasing and has positive derivative whenever $\beta>\beta_0(\mathcal{M}, \tau_0)$ where $\beta_0(\mathcal{M}, \tau_0)$ is the largest  time shift parameter such that the above function achieves its minimum $0$ at $\beta_0(\mathcal{M}, \tau_0)$.  We also recall that
\begin{equation}
     v^{\beta}_{\mathcal{C}}({\bf y},\tau)= v^{\beta}({\bf y},\tau)\chi_{\mathcal{C}}( v^{\beta}({\bf y},\tau))
\end{equation}
where $\chi_{\cC}:\mathbb{R}_{+}\to [0,1]$ is a increasing cutoff function that satisfies $\chi_{\cC}(v)=1$ for $v\geq \tfrac 78  \theta$ and $\chi_{\cC}(v)=0$ for $v\leq  \tfrac 58  \theta$. Hence, we know that
\begin{equation}
\beta\mapsto \left\langle v^{\beta}_{\cC}({\bf{y}} , \tau_{0})-\sqrt{2(n-k)}, 1\right\rangle_\cH
\end{equation}
is monotonely increasing. Note that the monotonicity is strict as long as $ \left\langle v^{\beta}_{\cC}({\bf{y}} , \tau_{0})-\sqrt{2(n-k)}, 1\right\rangle_\cH\neq -\left\langle \sqrt{2(n-k)}, 1\right\rangle_\cH$ (in this case the flow vanishes as a point and $v^{\beta}({\bf{y}}, \tau)=e^{\frac{\tau}{2}}V(e^{-\frac{\tau}{2}}{\bf{y}}, -e^{-\tau}-\beta)=0$ for $\beta\leq \beta(\tau_0)$ for  some unique shifting parameter $\beta(\tau_0)$  depending on $\tau_0$), and  
\begin{align}
    \lim_{\beta\to \infty} \left\langle v^{\beta}_{\cC}({\bf{y}} , \tau_{0})-\sqrt{2(n-k)}, 1\right\rangle_\cH = -\left\langle \sqrt{2(n-k)}, 1\right\rangle_\cH<0,
\end{align}
\begin{align}
    \lim_{\beta\to -\infty} \left\langle v^{\beta}_{\cC}({\bf{y}} , \tau_{0})-\sqrt{2(n-k)}, 1\right\rangle_\cH = \infty.
\end{align}
Hence, by strict monotonicity and
continuity there exists a unique $\beta(\mathcal{M}, \tau_0)$ such that 
\begin{align}
    \left\langle v^{\beta}_{\cC}({\bf{y}} , \tau_{0})-\sqrt{2(n-k)}, 1\right\rangle_\cH  = 0.
\end{align}
This defines the shift map $\mathcal{S}$. Since all ancient ovals $\mathcal{M}\in \mathscr{A}$ are extinct in $t = 0$, neither element of $\mathcal{M}$ is a time shift of one another. Hence, $\mathcal{S}$ is injective. To establish the continuity of $\mathcal{S}$, note first that $(\mathcal{M}, \beta, {\bf y}) \mapsto v^{\beta}_{\cC}({\bf{y}} , \tau_{0})$ is continuous, and that
\begin{align}
    \lim _{\beta \rightarrow+\infty}v^{\beta}_{\cC}({\bf{y}} , \tau_{0}) = +\infty
\end{align}
uniformly on compact sets of $\mathscr{A} \times \mathbb{R}$. Therefore, if $\mathcal M_i \rightarrow \mathcal M$, then the sequence $\left\{\beta\left(\mathcal M_i, \tau_0\right)\right\}_{i=1}^{\infty}$ is bounded, so it subsequentially converges to some $\beta \in \mathbb{R}$. Then again by continuity, we infer $\mathfrak{p}_{+}\left(v^{\mathcal M, \beta}\left(\tau_0\right)\right)=\mathfrak{p}_{+}(\sqrt{2(n-k)})$, hence $\beta=\beta\left(\mathcal M, \tau_0\right)$ by uniqueness. As this is true for every converging subsequence, it follows that $\beta\left(\mathcal M_i, \tau_0\right) \rightarrow \beta\left(\mathcal M, \tau_0\right)$, proving the continuity of $\mathcal M \mapsto \beta\left(\mathcal M, \tau_0\right)$, and thus of $\mathcal{S}$. Then for any $\mathcal{M}^\beta\in \mathcal{S}(\mathscr{A)}$, $\mathcal{S}^{-1}(\mathcal{M}^\beta)$ denotes the time that the flow becomes extinct. Since the extinct time is continuous, $\mathcal{S}^{-1}$ is continuous as well. Moreover, if $\mathcal{M}\in \mathscr{A}$ is strongly $\kappa'$-quadratic from time $\tau_0+1$, then for $\tau\leq\tau_0$ we get
\begin{equation}
    \left\| v_{\cC}(\mathbf{y}, \tau_{0})-\sqrt{2(n-k)}+\frac{\sqrt{2(n-k)}}{4|\tau_{0}|}(|\mathbf{y}|^2-2k)\right\|_{\mathcal{H}}\leq C\frac{\kappa'}{|\tau|}.
\end{equation}
Since the time shifted oval $\mathcal{M}^\beta$ and $\mathcal{M}$ are related by \eqref{timeshiftedgraph}, which can be rewritten as 
\begin{align}
    v^\beta({{\bf y}}, \tau)=(1+b) v\left(\frac{{{\bf y}}}{1+b}, \tau-2 \log (1+b)\right),
\end{align}
where $b=\sqrt{1+\beta e^\tau}-1$. We can expand
\begin{align}
    &\quad v^\beta({\bf y}, \tau)-\sqrt{2(n-k)}\\
    &=\sqrt{2(n-k)} b-(1+b) \frac{\sqrt{2(n-k)}\left(\left(\frac{{\bf y}}{1+b}\right)^2-2\right)}{4|\tau-2 \log (1+b)|}+O\left(\kappa^{\prime} /|\tau|\right)\notag
\end{align}
in $\cH$-norm. It follows that for the unique solution of the orthogonality condition \eqref{positive_mode} we have
\begin{align}\label{bregion}
    |b| \leqslant C \kappa^{\prime} /|\tau|.
\end{align}
Thus, choosing $\kappa^{\prime}$ sufficiently small and $\tau_0 \leqslant \tau_*$ sufficiently negative, we conclude that $\mathcal{S}(\mathcal{M})$ is $\kappa$-quadratic at time $\tau_0$. This finishes the proof of the proposition.
\end{proof}
We can homeomophically shift ellipsoidal flows such that they have vanishing positive mode.
\begin{proposition}[transformation map]\label{S_property} 
Given any $\tau_0<0$, there is  a constant $H>e^{|\tau_0|}$ depending on $\tau_0$ with the following significance. For any $a\in \mathbb{R}_{+}^k$ and $\ell$ large  with the lifespan  $|T_\mu^{\ell, a}|=({\lambda_\mu^{\ell,a}})^2|t_\mu^{\ell,a}|\geq H$ of the ellipsoidal flow $M^{a, \ell}_{\mu, t}$ constructed in \eqref{m_ell_a}, where $\mu=\sum_{i=1}^k a^{i}$, there is a unique $\beta=\beta(a, \ell)\in [-\frac{H}{2}, \frac{H}{2}]$
depending continuously on $a\in \mathbb{R}^k_{+}$ such that the truncated renormalized profile function $v^{\beta}_\cC$ of the shifted flow ${M}^{a, \ell}_{t-\beta}$ satisfies
\begin{equation}\label{orth_cond_prop}
 \Big\langle  1 ,v^{\beta}_\cC(\tau_0)-\sqrt{2(n-k)} \Big\rangle_\cH=0.
\end{equation}
\end{proposition}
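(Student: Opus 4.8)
The plan is to carry out, for the finite--time ellipsoidal flows, the argument that proves Proposition~\ref{time-shift}, together with the one--dimensional case of the Jacobian estimate of Proposition~\ref{JPhiestimates}, and to add the observation that the relevant time--shift is uniformly small once the lifespan is large. Concretely, I would write $V$ for the profile of the unshifted flow $M^{\ell,a}_{\mu,\cdot}$ and $v^{\beta}(\mathbf y,\tau)=e^{\tau/2}V(e^{-\tau/2}\mathbf y,-e^{-\tau}-\beta)$ for the renormalized profile of $M^{\ell,a}_{\mu,t-\beta}$, with the convention $v^{\beta}\equiv 0$ once the shifted flow is extinct. By the $\mathbb Z_2^{k}$--symmetry of the ellipsoid the projections onto $y_1,\dots,y_k$ of $v^{\beta}_\cC(\tau_0)-\sqrt{2(n-k)}$ vanish, so \eqref{orth_cond_prop} is equivalent to $\mathfrak p_{+}\big(v^{\beta}_\cC(\tau_0)-\sqrt{2(n-k)}\big)=0$, and I set $g(\beta)=\big\langle 1,\,v^{\beta}_\cC(\tau_0)-\sqrt{2(n-k)}\big\rangle_\cH$. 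Since $M^{\ell,a}_{\mu,\cdot}$ is a compact convex mean--convex flow, its space--time track is convex (cf.\ \cite{space-time-convex}; in the compact convex case this is just the nesting $M^{\ell,a}_{\mu,s_2}\subset M^{\ell,a}_{\mu,s_1}$ for $s_1<s_2$), exactly as in the proof of Proposition~\ref{time-shift}; hence $\beta\mapsto v^{\beta}(\mathbf y,\tau_0)$ is nondecreasing for each $\mathbf y$, so $g$ is nondecreasing, strictly increasing on the range of shifts for which the shifted flow at renormalized time $\tau_0$ is a nondegenerate noncollapsed surface, and identically $-\langle 1,\sqrt{2(n-k)}\rangle_\cH<0$ for shifts below that range.

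\emph{Existence, uniqueness, smallness of the shift.} By the construction of \cite{DH_ovals} and the sharp asymptotics of \cite{DZ_spectral_quantization} there are $\kappa>0$, $\tau_{*}>-\infty$, and a constant $H=H(\tau_0)$ which I take to be a sufficiently large multiple of $e^{|\tau_0|}$, such that whenever $\tau_0\le\tau_{*}$ and $|T^{\ell,a}_\mu|\ge H$ the flow $M^{\ell,a}_{\mu,\cdot}$ is $\kappa$--quadratic at time $\tau_0$, uniformly in $a\in\mathbb R^{k}_{+}$ and $\ell$; since $|T^{\ell,a}_\mu|\ge H\gg e^{|\tau_0|}$, the window around $\tau_0$ needed below lies inside the lifespan. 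As $\langle 1,|\mathbf y|^{2}-2k\rangle_\cH=0$, the $\kappa$--quadraticity gives $|g(0)|=O(\kappa/|\tau_0|)$. Applying the transformation law $v^{\beta}(\mathbf y,\tau)=(1+b)\,v\big(\tfrac{\mathbf y}{1+b},\tau-2\ln(1+b)\big)$ with $b=b(\beta)=\sqrt{1+\beta e^{\tau}}-1$, together with the $\psi_{k+1}$--estimates \eqref{1b}, \eqref{1g} of Proposition~\ref{JPhiestimates} and the chain rule, one finds on the range where $|\tau_0|\,|b(\beta)|\le 10\kappa$ that $g(\beta)=\sqrt{2(n-k)}\,\|1\|^{2}_\cH\,b(\beta)+O(\kappa/|\tau_0|)$ and $\tfrac{d}{d\beta}g(\beta)\ge c\,e^{\tau_0}>0$. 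Hence $g$ has a unique zero $\beta=\beta(a,\ell)$ on that range, and combining $|g(0)|=O(\kappa/|\tau_0|)$ with $\tfrac{d}{d\beta}g\ge c\,e^{\tau_0}$ forces $|\beta(a,\ell)|\le C\,\kappa\,e^{|\tau_0|}/|\tau_0|$, which is below $H/2$ once $H$ is large enough; by the global monotonicity of the first step this is the only zero of $g$ among admissible shifts, so $\beta(a,\ell)\in[-H/2,H/2]$ is well defined by \eqref{orth_cond_prop}. Note $H>e^{|\tau_0|}$, which is also exactly what makes $v^{\beta}_\cC(\tau_0)$ meaningful under the lifespan hypothesis.

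\emph{Continuity in $a$, and the main obstacle.} The ellipsoid $E^{\ell,a}_\mu$ and $\mu=\sum_i a^i$ depend smoothly on $a$; the mean curvature flow, its extinction time $t^{\ell,a}_\mu$, and the Gaussian integral in \eqref{bubblesheetcond} depend continuously on $a$, hence so does $\lambda^{\ell,a}_\mu$ (defined implicitly by \eqref{bubblesheetcond}, monotone in $\lambda$); therefore $(a,\beta,\mathbf y)\mapsto v^{\beta}_\cC(\mathbf y,\tau_0)$, and thus $(a,\beta)\mapsto g(\beta;a)$, are continuous. Combined with the strict monotonicity of $g(\cdot;a)$ near its zero and the uniform bound $|\beta(a,\ell)|\le H/2$, the argument from the proof of Proposition~\ref{time-shift} applies: for $a_i\to a$ the sequence $\{\beta(a_i,\ell)\}$ is bounded, every subsequential limit is a zero of $g(\cdot;a)$, hence equals $\beta(a,\ell)$ by uniqueness, and so $\beta(a_i,\ell)\to\beta(a,\ell)$. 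The delicate point---the main obstacle---is the uniform $\kappa$--quadraticity of the ellipsoidal approximants at $\tau_0$, uniform in $a\in\mathbb R^k_+$ and in $\ell$ with $|T^{\ell,a}_\mu|\ge H$: one must check that the nonuniformity in \cite{DZ_spectral_quantization} affects only how negative $\tau_0$ must be taken and not the constant in the estimate at a fixed such $\tau_0$, and that for $|T^{\ell,a}_\mu|\ge H$ the window used in Proposition~\ref{JPhiestimates} lies inside both the lifespan and the cylindrical regime $(-\infty,\tau_{*}]$. Granting this, the rest is a routine transcription of Proposition~\ref{time-shift} and the $\psi_{k+1}$ part of Proposition~\ref{JPhiestimates}.
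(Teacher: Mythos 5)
Your proposal has a genuine gap at its quantitative core: the claim that the ellipsoidal flows $M^{\ell,a}_{\mu,\cdot}$ are $\kappa$-quadratic at the fixed time $\tau_0$ \emph{uniformly in $a\in\mathbb{R}^k_{+}$} once the lifespan is $\ge H$. This is false, and it is not the mild "technical check" you describe as the main obstacle. The normalization \eqref{m_ell_a}--\eqref{bubblesheetcond} pins the Gaussian density at time $-\mu^{-2}$, so varying $\mu=\sum_j a^j$ acts as a parabolic rescaling: the renormalized profile of such a flow at the fixed time $\tau_0$ is (approximately) the profile of a unit-normalized flow at time $\tau_0+\gamma$ with $\mu=e^{\gamma/2}$. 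For $\mu$ large the profile at $\tau_0$ is near the round extinction profile $\sqrt{2n-|\mathbf{y}|^2}$, and for $\mu$ small it is near the cylinder but with quadratic coefficient $\approx\frac{\sqrt{2(n-k)}}{4|\tau_0+\gamma|}$, which violates \eqref{condition1intro} at $\tau_0$ by much more than $\kappa/|\tau_0|$. (This degeneration is exactly what the paper's Claim \ref{claim_boundedness} must exclude, and it does so only \emph{after} imposing the spectral constraint \eqref{target compact}; it is not available here, where the proposition must cover every $a\in\mathbb{R}^k_{+}$ with only a lifespan lower bound.) Consequently your estimates $|g(0)|=O(\kappa/|\tau_0|)$, the derivative lower bound via the $\psi_{k+1}$-estimates of Proposition \ref{JPhiestimates} (which require $\kappa$-quadraticity), and hence the existence of a zero in $[-H/2,H/2]$ with $H=H(\tau_0)$ independent of $a$, are not established. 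A further mismatch: your argument needs $\tau_0\le\tau_*$ very negative, whereas the proposition is asserted for every $\tau_0<0$. The paper obtains the uniform bound and existence differently: it introduces two auxiliary shift maps $\beta_1,\beta_2$ on the (sequentially compact) space $\mathscr{A}$ with the modified targets $\sqrt{n-k}$ and $2\sqrt{n-k}$, bounds them by $H/4$ via compactness and continuity, and then runs a contradiction/compactness argument with the intermediate value theorem for the ellipsoidal flows with $\ell$ large; uniqueness and continuity are likewise obtained by passing to limits in $\mathscr{A}$, where the strict monotonicity (from space-time convexity of \emph{ancient} flows) is available.

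On the other hand, your nesting observation is a reasonable and more elementary route to the monotonicity/uniqueness part: since the ellipsoids are convex and convexity is preserved, the flow moves strictly inward, the enclosed bodies are strictly nested, and by the $O(n+1-k)$-symmetry the radial profile is pointwise strictly increasing in $\beta$ wherever positive, so $g$ is strictly increasing near any zero. The paper deliberately avoids arguing directly on the ellipsoidal flows (it notes that convexity of their space-time track is unknown) and instead deduces uniqueness by a limiting argument, so your step, if written carefully, would simplify that portion. But it does not repair the existence-with-uniform-bound step, which is where the compactness-based comparison with $\mathscr{A}$ is essential and where your proposal currently fails.
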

\begin{proof}
The argument is similarly as the above but we don't know the space-time track of ellipsoidal flows is convex or not. Firstly, let us observe that for any fixed $\tau_0<0$ the same argument as  in deriving \eqref{shift vanish +} of Proposition \ref{time-shift} with $\sqrt{2(n-k)}$ replaced by $\sqrt{n-k}$ and $2\sqrt{n-k}$, respectively, yields the existence of two continuous maps $\beta_1: \mathscr{A} \rightarrow \mathbb{R}$ and $\beta_2: \mathscr{A} \rightarrow \mathbb{R}$ such that the  profile functions of time shifted ovals ${M}_{t-\beta_1}$ and ${M}_{t-\beta_2}$ satisfy
\begin{align}
\mathfrak{p}_{+}\left(v_\cC^{\beta_1}\left(\cdot,\tau_0\right)-\sqrt{n-k}\right)=0 \quad \text { and } \quad \mathfrak{p}_{+}\left(v_\cC^{\beta_2}\left(\cdot,\tau_0\right)-2\sqrt{n-k}\right)=0.
\end{align}
Because $\mathscr{A}$ is compactified space of $\mathscr{A}^{\circ}$ and for fixed $\tau_0<0$, $\beta_1, \beta_2$ depends on elements in  $\mathscr{A}$ continuously, we can find a large enough constant $e^{|\tau_0|}<H<+\infty$ depending only on $\tau_0$  such that 
\begin{equation}
    \max\{\left|\beta_1\right|_{C^0},\left|\beta_2\right|_{C^0} \} \leq \frac{H}{4}.
\end{equation} 
Therefore, if

Now we show the existence of such $\beta=\beta(a, \ell)$, suppose towards a contradiction there is a sequence $\left(a_i, \ell_i\right)$ with $\ell_i \rightarrow \infty$ such that the asserted $\beta\left(a_i, \ell_i\right)$ in the interval $[-\frac{H}{2}, \frac{H}{2}]$ does not exist. After passing to a subsequence, the ellipsoidal flows $\mathcal M^{\ell_i, a_i}$ converge to some $\mathcal M \in \mathscr{A}$, and hence for $i$ large enough the corresponding truncated renormalized profile function of $M^{\ell_i,a_i}_{t-\beta_1}$ and ${M}^{\ell_i,a_i}_{t-\beta_2}$ satisfy
\begin{align}
\mathfrak{p}_{+}\left(v_{\cC,i}^{\beta_1}\left(\cdot,\tau_0\right)-\sqrt{2(n-k)}\right)<0 
\end{align}
and
\begin{align}
\mathfrak{p}_{+}\left(v_{\cC,i}^{\beta_2}\left(\cdot,\tau_0\right)-\sqrt{2(n-k)}\right)>0.
\end{align}
However, by the intermediate value theorem, we can find some $\beta\left(a_i, \ell_i\right)$ between $\beta_1(\mathcal{M})\in  [-\frac{H}{4}, \frac{H}{4}]$ and $\beta_2(\mathcal{M})\in [-\frac{H}{4}, \frac{H}{4}]$ such that the cylindrical profile function of $M^{\ell_i, a_i}_{t-\beta\left(a_i, \ell_i\right)}$ satisfies
\begin{align}\label{ai shift positive mode vanish}
\mathfrak{p}_{+}\left(v_{\cC,i}^{\beta(a_i,\ell_i)}\left(\cdot,\tau_0\right)-\sqrt{2(n-k)}\right)=0.  
\end{align}
which is a contradiction with our nonexistence assumption.

Next, to address uniqueness, observe that for every $\mathcal{M}\in\mathscr{A}$, according to the proof in Proposition \ref{time-shift}, the function 
\begin{equation}\label{beta to projection function}
\beta\mapsto \left\langle v^{\beta}_{\cC}({\bf{y}} , \tau_{0})-\sqrt{2(n-k)}, 1\right\rangle_\cH
\end{equation}
is convex strictly monotone increasing and has strictly positive derivative whenever $\beta>\beta_0(\mathcal{M}, \tau_0)$, where $\beta_0(\mathcal{M}, \tau_0)$ is the largest time shift parameter such that the above  function \eqref{beta to projection function} achieves its minimum at $\beta_0(\mathcal{M}, \tau_0)$:
\begin{align}
    \left\langle v^{\beta}_{\cC}({\bf{y}} , \tau_{0})-\sqrt{2(n-k)}, 1\right\rangle_\cH= -\left\langle \sqrt{2(n-k)}, 1\right\rangle_\cH.
\end{align}
Therefore, it follows again from smooth convergence and compactness that for $\ell_i$ large enough, the value  $\beta(a_i, \ell_i)$ is unique. Otherwise, by taking subsequential limit to some element $\mathcal{M}\in \mathscr{A}$ as $i\to \infty$ and noticing \eqref{ai shift positive mode vanish}, we obtain that the the above function \eqref{beta to projection function} has  vanishing derivative at the limiting time-shift parameter point $\beta_{\infty}\in (\beta_0(\mathcal{M}, \tau_0), +\infty)$ of the sequence of time shift parameters $\beta(a_i, \ell_i)$ as $i\to \infty$, which is a contradiction with nonvanishing derivative property at this point.  Finally, continuity of the map $a\mapsto \beta(a,\ell)$ follows from uniqueness and boundedness of $\beta(a,\ell)$ as discussed in Proposition \ref{time-shift}.

\end{proof}

\section{Existence with prescribed spectral value}
In this section, we discuss the existence with prescribed spectral value stated in Theorem
\ref{prescribed_eccentricity_restated} and  Corollary \ref{prescribed_eccentricity_restated'}. 

To this end, let us denote 
\begin{equation}
\mathcal{M}^{a}_{i}=\mathcal{M}_{\ell_i,a}^{\beta(a, \ell_i)}
\end{equation}
be the shifted ellipsoidal flow with ellipsoidal parameter $a$ and $\mathcal{M}^{a_i}_{i}=\mathcal{M}_{\ell_i,a_i}^{\beta(a_i, \ell_i)}$ be a sequence of the shifted ellipsoidal flows converging to some ancient oval $\mathcal{M}\in \mathscr{A}'=\mathcal{S}(\mathscr{A})$. In particular, we define $\mathscr{S}'\subset \mathcal{A}'$ containing all  $\mathrm{O}(k)\times\mathrm{O}(n+1-k)$-symmetric shifed ancient ovals, which are limits of converging sequences of  $\mathrm{O}(k)\times\mathrm{O}(n+1-k)$-symmetric shifted ellipsoidal flows.

Then, for fixed $\tau_0$ we consider the spectral  map $\mathscr{E}(\mathcal{M}):\mathscr{A}'\to \mathbb{R}^{k}$
\begin{align}\label{mathscr E}
\mathscr{E}(\mathcal{M})=\mathscr{E}(\mathcal{M})(\tau_0)=(\mathscr{E}^1(\mathcal{M})(\tau_0),\dots, \mathscr{E}^k(\mathcal{M})(\tau_0)),
\end{align}
where
\begin{equation}
   \mathscr{E}^j(\mathcal{M})(\tau_0)= {\Big\langle v^{\mathcal{M}}_{\cC}(\tau_{0}),  2-|{\bf{y}}|^2_{j} \Big\rangle_\cH}\quad j=1,\dots,k
\end{equation}
and we can also  naturally extend the spectral map to the associated spectral map for shifted ellipsoidal flows $E_i: \mathbb{R}^k_{+}\to \mathbb{R}^k$:
\begin{equation}
    {E}_i(a)=( {E}^1_i(a), \dots,  {E}^k_i(a))=(\mathscr{E}^1(\mathcal{M}^{a}_i),\dots, \mathscr{E}^k(\mathcal{M}^{a}_i))
\end{equation}
Let $\mathbf{S}_k$ be the discrete 
$k$-coordinates permutation group. We see $\mathscr{E}:\mathscr{A}'\to \mathbb{R}^{k}$ and $E_i: \mathbb{R}^{k}_{+}\to \mathbb{R}^{k}$ are ${\bf{S}}_{k}$-equivariant continuous map continuous map  and can be naturally continuously extended to their domain boundary.

Let $\Delta\subset\mathbb R^k$ denotes the standard $k-1$-dimensional simplex with $k$ vertices $(1,0,\cdots,0)$, $\cdots$, $(0,0,\cdots,1)$ and barycenter $(\frac{1}{k},\frac{1}{k},\cdots,\frac{1}{k})$. For $\delta>0$, let scaled simplex defined by $\Delta_\delta = \delta\Delta$. We also denote $\Delta_\delta(b) = \{{\bf\mu}-(\frac{b-1}{k}, \dots, \frac{b-1}{k}):{\bf\mu}\in \Delta_\delta\}$. Then we can define the shifted $\mathbf{S}_k$-symmetric $k$-dimensional simplicial cylinder with size $\eta$ and center at $(\frac{b}{k}, \dots, \frac{b}{k})$
\begin{equation}\label{defcylinder}
    \cylinders_\eta (b)= \bigcup_{d\in [b-\eta, b+\eta]} \Delta_\delta(d).
\end{equation}


Then we have the following theorem
\begin{theorem}[existence with prescribed spectral value]\label{prescribed_eccentricity_restated} 
There exist small constants $\delta>0$, $\kappa>0$ and very negative constant $\tau_\ast>-\infty$ with the following significance.  For every $\tau_{0}\leq \tau_{*}$ and every 
\begin{equation}\label{targetmu}
    \mu=(\mu_1, \dots, \mu_k)\in \cylinders_{\frac{\delta\kappa}{|\tau_0|}}(\bar{e})
\end{equation}
where 
\begin{equation}
\bar{e}=\frac{\sqrt{2(n-k)}\||\mathbf{y}|^2-2k\|^2_{\mathcal{H}}}{4|\tau_0|},
\end{equation}
and $\cylinders_{\frac{\delta\kappa}{|\tau_0|}}(\bar{e})$ denotes the $k$-dimensional simplicial cylinder centered at the point $(\frac{\bar{e}}{k}, \dots, \frac{\bar{e}}{k})$ with size $\frac{\delta\kappa}{|\tau_0|}$ defined in \eqref{defcylinder}, there exists a time-shifted ancient oval  $\mathcal{M}\in\mathscr{A}'$ that is $\kappa$-quadratic at $\tau_0$  and satisfies
\begin{equation}
    \mathscr{E}(\mathcal{M})= \mathscr{E}(\mathcal{M})(\tau_0)=\mu.
\end{equation}
\end{theorem}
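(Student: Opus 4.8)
The statement is a surjectivity (existence) result for the spectral map $\mathscr E$ on $\mathscr A'$ onto the simplicial cylinder $\cylinders_{\delta\kappa/|\tau_0|}(\bar e)$, which is a $k$-dimensional neighborhood of the ``balanced'' bending value $\tfrac{\bar e}{k}\mathbf 1$ at the natural scale $|\tau_0|^{-1}$ (thin, of width $\sim\delta\kappa/|\tau_0|$, in the diagonal/trace direction). Since the topology of $\mathscr A'$ is not yet available, I would not attempt a mapping-degree argument on $\mathscr A'$ itself; instead I would run the degree argument on the finite-dimensional parameter maps $E_i:\mathbb R^k_+\to\mathbb R^k$ of the shifted ellipsoidal flows $\mathcal M^a_i=\mathcal M^{\beta(a,\ell_i)}_{\ell_i,a}$, and then push the solutions to the limit $\ell_i\to\infty$ using the sequential compactness of $\mathscr A$ together with the continuity of $\mathscr E$ and of the shift maps of Propositions \ref{time-shift} and \ref{S_property}. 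Fix $\tau_0\le\tau_\ast$ and $\mu\in\cylinders_{\delta\kappa/|\tau_0|}(\bar e)$.

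First I would fix the degree domain: let $\Omega\subset\mathbb R^k_+$ be a ``simplicial cylinder in parameter space'', the product of a sub-simplex $\Delta_{\delta'}$ of eccentricity ratios $a/|a|_1$ near the barycenter $\tfrac1k\mathbf 1$ with an interval $|a|_1\in[\mu_-,\mu_+]$ of overall scales. For $i$ large, after the time-shift $\beta(a,\ell_i)$ of Proposition \ref{S_property} the flows $\mathcal M^a_i$ lie in $\mathscr A'$ and are $\kappa$-quadratic at $\tau_0$ for $a\in\Omega$ (this follows from the uniform sharp asymptotics of \cite{DZ_spectral_quantization} and the Gaussian-tail estimates, exactly as in Proposition \ref{S_property} and Corollary \ref{prop_continuous_orthogonality}); on $\Omega$ the maps $E_i$ are continuous, $\mathbf S_k$-equivariant, and extend continuously to $\partial\Omega$. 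Next I would linearize: from the sharp cylindrical asymptotics of $M^{\ell_i,a}_{\mu,t}$ near $-\infty$ one extracts a leading-order expansion $E_i^j(a)=|\tau_0|^{-1}F^j(a)+o(|\tau_0|^{-1})$, uniformly for $a\in\Omega$ and $i$ large, where the traceless part of $F=(F^1,\dots,F^k)$ is a monotone function of the eccentricity ratios $a^j/|a|_1$ and the trace $\sum_jF^j$ is strictly monotone in $|a|_1$; the trace monotonicity is the rescaling monotonicity of the quadratic bending mode (Claim \ref{Rescaling monotonicity}) for the $\mathrm O(k)\times\mathrm O(n+1-k)$-symmetric flows, which persists for the ellipsoidal flows by the $\ell_i\to\infty$ convergence, and the transverse monotonicity comes from the explicit dependence of the linearized ellipsoidal profile on its semi-axes.

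Then I would run the degree computation. On the scale-boundary $|a|_1=\mu_\pm$ the strict monotonicity of $\sum_jE_i^j$ pushes $E_i(a)$ out of the $\eta$-window ($\eta=\delta\kappa/|\tau_0|$) of the prism in the diagonal direction; on the eccentricity-boundary $a/|a|_1\in\partial\Delta_{\delta'}$ the monotone dependence of the traceless part of $E_i$ on the ratios pushes $E_i(a)$ out of the transverse cross-section of the prism (choosing $\Delta_{\delta'}$ appropriately; in the fully degenerate limit $a^j/|a|_1\to0$ this matches an $(n-1,k-1)$-oval factor, which is what powers the induction on $k$ used later for Corollary \ref{prescribed_eccentricity_restated'} and the moduli theorem). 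Hence $E_i(\partial\Omega)$ misses $\mu$, and after the affine identification of $\cylinders_{\delta\kappa/|\tau_0|}(\bar e)$ with $\Omega$ the rescaled map $|\tau_0|E_i$ is homotopic on $\partial\Omega$, through maps avoiding $\mu$, to the identity, so $\deg(E_i|_\Omega,\mu)=1$ — this is the analogue of the homotopy-plus-Jacobian argument of Corollary \ref{prop_continuous_orthogonality} and Proposition \ref{JPhiestimates}. Therefore for every large $i$ there is $a_i\in\Omega$ with $\mathscr E(\mathcal M^{a_i}_i)=E_i(a_i)=\mu$, with $\mathcal M^{a_i}_i\in\mathscr A'$ $\kappa$-quadratic at $\tau_0$. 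Finally I would take a limit: $\{\mathcal M^{a_i}_i\}$ lies in the sequentially compact $\mathscr A'$, so a subsequence converges in local smooth convergence to some $\mathcal M\in\mathscr A'$; by continuity of $\mathscr E$, $\mathscr E(\mathcal M)=\mu$, and since $\kappa$-quadraticity at $\tau_0$ is a closed condition (an inequality between continuous functionals together with the closed graphical-radius bound), $\mathcal M$ is $\kappa$-quadratic at $\tau_0$, which is the desired oval.

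The main obstacle is the linearization step: obtaining the expansion $E_i^j(a)=|\tau_0|^{-1}F^j(a)+o(|\tau_0|^{-1})$ \emph{uniformly} over the whole macroscopic (at scale $|\tau_0|^{-1}$) parameter region $\Omega$ and over $i$, with enough quantitative control near the degenerate part of $\partial\Omega$ and in the $\ell_i\to\infty$ regime to guarantee that the admissible boundary homotopy above actually exists. This requires marrying the nonuniform sharp asymptotics of \cite{DZ_spectral_quantization} with the construction and a priori estimates for the ellipsoidal flows of \cite{DH_ovals,DH_blowdown} (and the monotonicity input of Claim \ref{Rescaling monotonicity}); the degree bookkeeping and the compactness/limit argument are then routine given the propositions already established.
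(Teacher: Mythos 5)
Your overall scaffolding --- run a degree argument for the finite-dimensional parameter maps $E_i$ of the shifted ellipsoidal flows and then pass to a subsequential limit in the compact class $\mathscr{A}'$ using continuity of $\mathscr{E}$ and closedness of $\kappa$-quadraticity --- is the same as the paper's. The gap is in how you compute the degree. You base it on a claimed uniform expansion $E_i^j(a)=|\tau_0|^{-1}F^j(a)+o(|\tau_0|^{-1})$ over a macroscopic parameter prism $\Omega$, with the traceless part of $F$ monotone in the ratios $a^j/|a|_1$ and the trace monotone in $|a|_1$, so that on $\partial\Omega$ the rescaled map is homotopic to the identity and $\deg=1$. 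No such expansion or transverse monotonicity is available from the cited inputs: the sharp asymptotics of \cite{DZ_spectral_quantization} describe each fixed flow as $\tau\to-\infty$, not the dependence of the $\tau_0$-spectral projections on the ellipsoidal parameter $a$, uniformly in $a$ and in $\ell_i$; and the explicit semi-axes of the initial ellipsoid do not translate into an explicit (let alone monotone) formula for the neutral-mode projections of the evolved flow at a fixed very negative time. The only monotonicity the paper proves --- and, via the Jacobian estimates of Proposition \ref{JPhiestimates}, can prove --- is in the diagonal (parabolic rescaling) direction, and only for flows already known to be $\kappa$-quadratic (Claims \ref{Rescaling monotonicity} and \ref{Rescaling monotonicity for ellipsoidal flow}). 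The transverse directions are handled in the paper with no monotonicity at all, by a purely topological mechanism: compactness of the preimage set (Claim \ref{claim_boundedness}), an $\mathbf{S}_k$-equivariant smooth and transversal approximation of $E_i$, the innermost-component construction, and the inductive winding-number argument of Claim \ref{kboundary surjective} giving surjectivity of the boundary map onto $\partial\cylinders^k_{\frac{\delta\kappa}{|\tau_0|}, i}$. So the step you yourself flag as the ``main obstacle'' is not a technical refinement of known estimates; it is exactly the crux, and the paper replaces it by a genuinely different (equivariant-topological) argument rather than proving it.

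A second, related soft spot: you assert that for all $a\in\Omega$ the shifted ellipsoidal flows are $\kappa$-quadratic at $\tau_0$ ``exactly as in Proposition \ref{S_property} and Corollary \ref{prop_continuous_orthogonality}''. In the paper, $\kappa$-quadraticity is not established on an a priori chosen macroscopic parameter region; it is proved only on the region enclosed by the innermost preimage component, by the open--closed saturation argument of Claim \ref{saturationomega}, which uses Theorem \ref{point_strong}, the Merle--Zaag-type estimate, and crucially the constraint that the spectral value stays inside the thin cylinder $\cylinders_{\frac{\delta\kappa}{|\tau_0|}, i}$. Without the (unproved) parameter-to-spectrum dictionary you cannot know in advance which prism in $a$-space corresponds to the thin spectral cylinder, so both the quadraticity claim on all of $\Omega$ and the boundary-pushing homotopy on $\partial\Omega$ are unsupported; likewise, your suggestion that the degenerate limit $a^j/|a|_1\to 0$ ``powers the induction on $k$'' does not match the paper, where such degenerations produce line-splitting limits used only to prove compactness (Claim \ref{claim_boundedness}), while the induction on $k$ is a combinatorial/topological induction on the boundary structure of the simplicial cylinder.
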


\begin{proof}
\textbf{Spectral surjectivity along diagonal and compactness.} Let  $K=K(\kappa, \tau_0)$  be the  ${\bf S}_k$-symmetric $k$-dimensional set of all points $a=(a^1, \dots a^k)\in \mathbb{R}^{k}_{+}$ such that the shifted ellipsoidal flow $\mathcal{M}^{a}_i$  satisfies 
\begin{equation}\label{target compact}
    {E}_i(a)=\mathscr{E}(\mathcal{M}^{a}_i)\in \cylinders_{\frac{\delta\kappa}{|\tau_0|}}(\bar{e}).
\end{equation}
Firstly, we note that $K\not=\emptyset$. Indeed, possibly after decreasing $\tau_\ast$, given any $\tau_0\leq\tau_\ast$ negative enough, by asymptotics and Proposition \ref{prop_orthogonality} (orthogonality) on   the (up to parabolic rescaling) unique  $\mathrm{O}(k)\times\mathrm{O}(n+1-k)-$symmetric ancient oval\cite{DH_ovals} and Proposition \ref{time-shift}, for $\delta>0$ in \eqref{targetmu} small enough, after proper rescaling and unique time shift we can find three $\mathrm{O}(k)\times\mathrm{O}(n+1-k)$-symmetric ancient ovals $\mathcal{M}^{\textrm{sym}, \pm}\in \mathscr{S}', 
 \mathcal{M}^{\textrm{sym}}\in \mathscr{S}'$ such that
\begin{equation}\label{limit symmetry oval value}
    \mathscr{E}(\mathcal{M}^{\textrm{sym}, \pm})=(\frac{\bar{e}\pm\frac{2\delta\kappa}{|\tau_0|}}{k}, \dots, \frac{\bar{e}\pm\frac{2\delta\kappa}{|\tau_0|}}{k})\quad
\Big\langle v_{\cC}^{\mathcal{M}^{\textrm{sym}, \pm}}(\tau_0),  2k-|{\bf{y}}|^2\Big\rangle_\cH=\bar e\pm\frac{2\delta\kappa}{|\tau_0|}
\end{equation}
\begin{equation}\label{limit symmetry oval value 2}
    \mathscr{E}(\mathcal{M}^{\textrm{sym}})=(\frac{\bar{e}}{k}, \dots, \frac{\bar{e}}{k})\quad
\Big\langle v_{\cC}^{\mathcal{M}^{\textrm{sym}}}(\tau_0),  2k-|{\bf{y}}|^2\Big\rangle_\cH=\bar e.
\end{equation}
 Recall the construction of ancient ovals from \cite{DH_ovals} and using the homeomorphism and uniqueness of the restricted shifted map $\mathcal{S}: \mathscr{S}\to \mathscr{S}'$ in Proposition \ref{time-shift}, we can require that $\mathcal{S}^{-1}(\mathcal{M}^{\textrm{sym}, \pm})$ and $\mathcal{S}^{-1}(\mathcal{M}^{\textrm{sym}})$ are limits of unshifted ellipsoidal flows with ellipsoidal parameters $\bar{a}_{\pm}$ and $\bar{a}$ respectively.
 Using homeomorphism and uniqueness  of the restricted shifted map on ellipsoidal flow class in Proposition \ref{S_property}, there are three sequences of time shifted $\mathrm{O}(k)\times\mathrm{O}(n+1-k)-$symmetric  ellipsoidal flows $\mathcal{M}^{\bar{a}_{\pm}}_{i}$ and $\mathcal{M}^{\bar{a}}_{i}$ with ellipsoidal parameters $\bar{a}_{\pm}=(\frac{\bar{\mu}_{\pm}}{k}, \dots, \frac{\bar{\mu}_{\pm}}{k})$, $\bar{a}=(\frac{\bar{\mu}}{k}, \dots, \frac{\bar{\mu}}{k})$ for some $\bar{\mu}_{\pm}>0, \bar{\mu}>0$ such that they are $\kappa$-quadratic at $\tau_0$ and
\begin{equation}\label{sym oval ellipsoid limit}   
\mathcal{M}^{\textrm{sym}, \pm}=\lim_{i\to \infty}\mathcal{M}^{\bar{a}_{\pm}}_{i}\quad \mathcal{M}^{\textrm{sym}}=\lim_{i\to i}\mathcal{M}^{\bar{a}}_{i},
\end{equation}
and their profile functions $v^{\bar a_{\pm}}_i$, $v^{\bar a}_i$ satisfy
\begin{equation}
\mathfrak{p}_{+}(v^{\bar{a}_{\pm}}_{\cC, i})(\tau_{0})=0\quad \mathfrak{p}_{+}(v^{\bar{a}}_{\cC, i})(\tau_{0})=0
    \end{equation}
    Then we define
\begin{equation}
\bar{e}_{i, \pm}=\sum_{j=1}^{k}\bar{e}_{i, \pm}^{j}=\sum_{j=1}^{k}{E}^{j}_i(\bar{a}_{\pm})=\sum_{j=1}\mathscr{E}^{j}(\mathcal{M}^{\bar{a}_{\pm}}_i)
\end{equation}
\begin{equation}\label{barei}
\bar{e}_{i}=\sum_{i=1}^{k}\bar{e}_{i}^{j}=\sum_{j=1}{E}^{j}_i(\bar{a})=\sum_{j=1}\mathscr{E}^{j}(\mathcal{M}^{\bar{a}}_i)
\end{equation}
and note 
\begin{equation}
\bar{e}_{i,\pm}^{1}=\dots=\bar{e}_{i,\pm}^{k}=\frac{\bar{e}_{i,\pm}}{k}\quad \bar{e}_{i}^{1}=\dots=\bar{e}_{i}^{k}=\frac{\bar{e}_{i}}{k}
\end{equation}
and 
\begin{equation}\label{eibar to ebar}
   \lim_{i\to +\infty}\bar{e}_{i, \pm}=\bar{e}_{\pm}=\bar e\pm \frac{2\delta\kappa}{|\tau_0|}\quad  \lim_{i\to +\infty}\bar{e}_{i}=\bar{e}
\end{equation}
Therefore, for $i$ large enough
\begin{equation}\label{end symmetry points value}
  \bar{e}_{i, \pm}\in  (\bar{e}_{\pm}-\frac{\delta\kappa}{10|\tau_0|}, \bar{e}_{\pm}+\frac{\delta\kappa}{10|\tau_0|})\quad \bar{e}_{i}\in  (\bar{e}-\frac{\delta\kappa}{10|\tau_0|}, \bar{e}+\frac{\delta\kappa}{10|\tau_0|}).
\end{equation}
and in particular for $\delta>0, \kappa>0$ small
\begin{equation}
  \bar{e}_{i, -}<  \bar{e}_i<\bar{e}_{i, +}\quad  \bar{e}_{i, -}<  \bar{e}<\bar{e}_{i, +}.
\end{equation}
Then we define the diagonal set in $\mathbb R^k$ 
\begin{equation}
    \textrm{Diag}=\{\mu \in \mathbb{R}^{k}: \mu^{1}=\dots=\mu^k\}.
\end{equation}
For restriction $E_i|_{\text{Diag}}: \text{Diag}\subset \mathbb{R}^{k}\to \text{Diag}\subset \mathbb{R}^{k}$, by target points value range \eqref{end symmetry points value} and limiting $O(k)\times O(n+1-k)$ symmetric oval target values in \eqref{limit symmetry oval value}
and intermediate value theorem, we know that the restriction
\begin{equation}\label{ends oval surjectivity}
    E_i|_{\text{Diag}}: \text{Diag}\subset \mathbb{R}^{k}\to \{\mu\in\mathbb{R}^{k}_{+}:   \mu_1=\dots=\mu_k\in [\frac{\bar{e}}{k}-\frac{\delta\kappa}{k|\tau_0|}, \frac{\bar{e}}{k}+\frac{\delta\kappa}{k|\tau_0|}]\}
\end{equation}
is surjective and $K\not=\emptyset$. In particular, 
\begin{align}\label{diagsurjective}
   \cylinders_{\frac{\delta\kappa}{|\tau_0|}}(\bar{e})\cap {\rm Diag} \subset \mathscr{E}(\mathscr{S}').
\end{align}
For $\bar{e}_i$ given in \eqref{barei}, we note that $ \cylinders_{\frac{\delta\kappa}{|\tau_0|}}(\bar{e}_i)$ converges to $\cylinders_{\frac{\delta\kappa}{|\tau_0|}}(\bar{e})$ as $i\to \infty$.

Then,  for later applying techniques about differential topology for smooth manifolds and ease of notation, let $\cylinders^k_{\frac{\delta\kappa}{|\tau_0|}, i}$ be the ${\bf S}_k$-symmetric preserved, smooth isotopic $\eta_i$-close approximation to the simplicial cylinder $\cylinders^k_{\frac{\delta\kappa}{|\tau_0|}}(\bar{e}_i)$  (one can obtain it by small boundary distance level sets or by short time mean curvature flow) and the simplicial cylinders $\cylinders^k_{\frac{\delta\kappa}{|\tau_0|}, i}$ converge to $\cylinders_{\frac{\delta\kappa}{|\tau_0|}}(\bar{e})$ as small numbers $\eta_i\to 0$,. We also denote top and bottom $(k-1)$-dimensional simplicial faces of simplicial cylinder .
\begin{align}\label{Top}
    \simplex^{+}_{\frac{\delta\kappa}{|\tau_0|}, i}  =\cylinders_{\frac{\delta\kappa}{|\tau_0|}, i}\bigcap \{\mu\in\mathbb{R}^{k}:   \mu^1+\dots+\mu^k = \bar{e}_i+\frac{\delta\kappa}{|\tau_0|}]\}
\end{align}
and
\begin{align}\label{Bottom}
    \simplex^{-}_{\frac{\delta\kappa}{|\tau_0|}, i} =\cylinders_{\frac{\delta\kappa}{|\tau_0|}, i}\bigcap \{\mu\in\mathbb{R}^{k}:   \mu^1+\dots+\mu^k = \bar{e}_i-\frac{\delta\kappa}{|\tau_0|}]\}.
\end{align}
where $\bar{e}_i$ is given in \eqref{barei}.  Moreover, the skeleton structure, top and bottom simplicial faces denoted by $\simplex^{\pm, k}_{\frac{\delta\kappa}{|\tau_0|}, i}$ and their  vertices and barycenter  $\mu$ are naturally labeled in the smoothly approximated  simplicial cylinder $\cylinders^k_{\frac{\delta\kappa}{|\tau_0|}, i}$ by natural transfer. We also claim that
\begin{claim}[compactness]\label{claim_boundedness}
$K$ is compact and 
\begin{equation}
    d(K, \partial\mathbb{R}^{k}_{+})\geq c=c(\kappa, \tau_0)>0\quad \textrm{Diam}(K)\leq C=C(\kappa, \tau_0)<\infty.
\end{equation}
\end{claim}

\begin{proof}[Proof of Claim]
Suppose towards a contradiction that for all $\kappa>0$ and all $\tau_0$ negative, there is a sequence of $a_{i}\in K$ such that $a_{i}$ converges to $\partial \mathbb{R}^k_{+}\setminus \{(0,\dots, 0)\}$ as $i\to \infty$ or 
$|a_{i}|_1$  converges to $0$ or $+\infty$ tangentially along $\partial \mathbb{R}^k_{+}\setminus \{(0,\dots, 0)\}$ as $i\to \infty$, then by \cite{HaslhoferKleiner_meanconvex, Brendle_inscribed} and the construction of these ovals in \cite{DH_ovals}, we have $\mathcal{M}^{a_i}_i$ which are $\kappa$-quadratic at $\tau_0$ subsequentially converge to a limit $\mathcal{M}^{a_\infty}_i$ with line splitting along some coordinates directions as $i\to +\infty$, which will cause $\lim_{i\to +\infty} E_i(a_{i})$ having vanishing coordinates along these directions and this is a contradiction with \eqref{target compact}  for  small $\kappa>0$ and very negative $\tau_0$. Therefore, we only need to exclude the cases where 
either  $|a_{i}|_1$ converges to $0$ or $+\infty$ inside a cone of $\mathbb{R}^k_{+}$ without intersection with $\partial\mathbb{R}^k_{+}\setminus \{(0,\dots, 0)\}$.  

By computation,  the renormalized flow ${M}_{\gamma, \tau}$  of $e^{\frac{\gamma}{2}}$-parabolic rescaled flow $\mathcal{M}_{\gamma}$ of ancient oval $\mathcal{M}\in \mathscr{A}'$
has the profile function 
\begin{equation}
    \bar{v}^{\gamma}({\bf{y}}, \tau)=e^{\frac{\gamma+\tau}{2}}V(e^{-\frac{\gamma+\tau}{2}}{\bf{y}}, -e^{-(\gamma+\tau)}-\beta).
\end{equation} 
When $\tau=\tau_0$, we have the following blowup limit
\begin{equation}
    \lim_{\gamma\to -\infty}\bar{v}^{\gamma}({\bf{y}}, \tau_0)=\sqrt{2(n-k)}
\end{equation}
and blowdown limit
\begin{equation}
    \lim_{\gamma\to +\infty}\bar{v}^{\gamma}({\bf{y}}, \tau_0)=\sqrt{2n-|{\bf{y}}|^2}
\end{equation}
which are independent on $\tau_0$. Hence, fixed $\tau_0\leq \tau_*$ is  negative enough
and $|a_{i}|_1$ converges to $0$ or $+\infty$ inside a cone of $\mathbb{R}^k_{+}$ without intersection with $\partial\mathbb{R}^k_{+}\setminus \{(0,\dots, 0)\}$, we have the shifted ellipsoidal flows $\mathcal{M}^{a_i}_i$ 
would be very close to a  spherical blowup limit or cylindrical blowdown limit, which would again violate \eqref{target compact} or $\kappa$-quadraticity at $\tau_0$. 
\end{proof}

\textbf{Spectral injectivity along diagonal}. Fix constants $\tau_\ast>-\infty$ negative enough and $\kappa>0$ small enough such that Proposition \ref{S_property} (transformation map) and Theorem \ref{point_strong} (strong $\kappa$-quadraticity) apply. Our map $\mathscr{E}: \mathscr{A}'\to \mathbb{R}^k$ is a continuous map. For any time-shifted  $\mathcal{M}=\lim_{i\to \infty}\mathcal{M}^{a_i}_i\in \mathscr{A}'$ which is $\kappa$-quadratic at $\tau_0$ and satisifes vanishing positive mode condition\eqref{shift vanish +}, where $\sum_{j=1}^k a^j_i = \mu_0 = e^{\frac{\gamma_0}{2}}$. Then we can rewrite the ellipsoidal parameter $a_i$ as $(e^\frac{\gamma_0}{2}, {\bf \theta}_i)$, where ${\bf \theta}_i$ is a unit vector in $S^{k-1}$, which converges to a ${\bf \theta}\in S^{k-1}$. Let $V$ be the profile function of $\mathcal{M}$ and  ${\bf{y}}=e^{\frac{\tau}{2}}{\bf{x}}$, we consider the change of profile function with respect to the rescaling parameter $\gamma$. Then the renormalized flow of shifted ancient oval is the graph of 
\begin{equation}
    v^{\gamma}({\bf{y}}, \tau)=e^{\frac{\gamma + \tau}{2}}V(e^{-\frac{\gamma+\tau}{2}}{\bf{y}}, -e^{-\gamma-\tau}-\beta(\gamma,{\bf \theta})),
\end{equation}
Then the renormalized profile functions $v^{\gamma}$ and $v$ are related by 
\begin{align}\label{vgamma}
    v^{\gamma}({\bf y}, \tau) = (1+\tilde b)v(\frac{{\bf y}}{1+\tilde b}, (1+\tilde \Gamma)\tau),
\end{align}
where 
\begin{align}\label{tildebgamma}
    \tilde b = \sqrt{1+\tilde\beta e^\tau}-1,\quad \tilde\Gamma = \frac{\gamma-\log(1+\tilde\beta e^\tau)}{\tau},\quad \text{and} \quad \tilde\beta(\gamma) = \beta(\gamma) e^\gamma.
\end{align}
Since by Theorem \ref{point_strong} (strong $\kappa$-quadraticity) $v$ is strongly $5\kappa$-quadratic from $\tau_0$ which is very negative, then for $\tau\leq\tau_0$ considering Gaussian tail estimates we get
\begin{equation}
    \left\| v(y, \tau)-\sqrt{2(n-k)}+\frac{\sqrt{2(n-k)}}{4|\tau|}(|y|^2-2k)\right\|_{\mathcal{H}}\leq \frac{10\kappa}{|\tau|}.
\end{equation}
Since the renormalized profile functions $v^{\gamma}$ and $v$ are related by \eqref{vgamma}, we can expand
\begin{align}
    &\quad v^\gamma(y, \tau) - \sqrt{2(n-k)}\\
    &= \sqrt{2(n-k)}\tilde b - (1+\tilde b)\frac{\sqrt{2(n-k)}\left(\left(\frac{{y}}{1+\tilde{b}}\right)^2-2k\right)}{4(1+\tilde \Gamma)|\tau|}+O\left(\kappa/|\tau|\right)
\end{align}
in $\cH$-norm. It follows that for the unique solution of the orthogonality condition
\begin{equation}
\mathfrak{p}_{+}\big(v^{\gamma}_{\cC}(\cdot , \tau_{0})-\sqrt{2(n-k)}\big)=0,
\end{equation}
we have
\begin{align}
    |\tilde b| \leqslant 10\kappa/|\tau_0|.
\end{align}
Thus we have 
\begin{align}\label{tildebeta}
    |\tilde \beta e^\tau|\leq C\frac{\kappa}{|\tau|}.
\end{align}

\begin{claim}[rescaling monotonicity]\label{Rescaling monotonicity}
There exist $\kappa>0$ and $\tau_*>-\infty$ with the following significance. For any given reference ancient oval $\mathcal{M}$ which is $\kappa$-quadratic at time $\tau_0 \leq \tau_*$, the profile function of $\mathcal{M}^{\beta(\gamma), \gamma}$ satisfies
\begin{align}\label{negative derivative}
\langle \frac{\partial}{\partial \gamma} v_\cC^{\gamma}({\bf y}, \tau), |{\bf y}|^2 - 2k\rangle_\cH= -\frac{\sqrt{2(n-k)}||\psi_{k+2}||^2_\cH}{4|\tau|^2} +O(\frac{\kappa}{|\tau|^2})<0
\end{align}
holds for all $\tau \leq \tau_0$ and all $|\gamma| \leq 10\kappa|\tau|$.
\end{claim}
\begin{proof}[Proof of Claim]
For any fixed reference ancient oval $\mathcal{M}$ which is $\kappa$-quadratic at $\tau_0$, since  $\beta=\beta(\gamma)$ ($\beta$ also depends on $\mathcal{M}$) is uniquely determined by the equation
\begin{align}
    F(\beta,\gamma) = \left\langle v^{\beta}_{\cC}({\bf{y}} , \tau_{0})-\sqrt{2(n-k)}, 1\right\rangle_\cH  = 0.
\end{align}
We first show that $\tilde\beta$ is a smooth function of $\gamma$. For all $|\gamma|\leq 10\kappa|\tau|$, by \eqref{b-k+1} and \eqref{b-k+2} in Claim \ref{JPhiestimates}, we have 
\begin{align}\label{Fbeta}
    \frac{\partial F}{\partial \tilde{\beta}} &= \langle \frac{\partial}{\partial \tilde \beta} v_\cC^{\gamma}({\bf y}, \tau), 1\rangle_\cH 
    = \frac{\partial F}{\partial \tilde b}\frac{\partial \tilde{b}}{\partial \tilde \beta} + \frac{\partial F}{\partial \tilde \Gamma}\frac{\partial \tilde{\Gamma}}{\partial \tilde \beta}\\
    &= \sqrt{2(n-k)}||\psi_{k+1}||^2_\cH\frac{e^\tau}{2\sqrt{1+\tilde{\beta}e^\tau}} + O(\frac{e^\tau}{2\tau\sqrt{1+\tilde{\beta}e^\tau}}) - O(\kappa)\frac{e^\tau}{\tau(1+\tilde{\beta}e^\tau)}\notag\\
    &>0 \notag
\end{align}
for all $\tau\leq\tau_0$. Thus, $\tilde \beta$ is a smooth function of $\gamma$ according to the implicit function theorem, thus $\beta$ is also a smooth function of $\gamma$ by \eqref{tildebgamma}.
Moreover, by \eqref{Fbeta}, and using implicit function theorem, we get 
\begin{align}\label{betagamma}
    \frac{\partial \tilde \beta}{\partial\gamma} &= -(\frac{\partial F}{\partial \gamma})/(\frac{\partial F}{\partial \tilde \beta})\notag\\
    &= -(\frac{\partial F}{\partial \tilde\Gamma}\frac{\partial \tilde{\Gamma}}{\partial \gamma})/(\frac{\partial F}{\partial \tilde \beta})\notag\\
    &= (O(\kappa)\frac{1}{\tau})/(\frac{\partial F}{\partial \tilde \beta})\notag\\
    &= O(\frac{\kappa}{\tau e^\tau})+O(\frac{\kappa}{\tau^2 e^\tau}). 
\end{align}
Thus, by direct calculation and \eqref{betagamma}, we have 
\begin{align}
    \frac{\partial \tilde b}{\partial \gamma} = \frac{e^\tau}{2\sqrt{1+\tilde\beta e^\tau}}\frac{\partial\tilde{\beta}}{\partial \gamma} = O(\frac{\kappa}{\tau}),
\end{align}
\begin{align}
    \frac{\partial \tilde \Gamma}{\partial \gamma} = \frac{1}{\tau} + \frac{e^\tau}{(1+\tilde\beta e^\tau)|\tau|}\frac{\partial \tilde\beta}{\partial \gamma} = \frac{1}{\tau} + O(\frac{\kappa}{\tau^2}).
\end{align}
for all $\tau\leq\tau_0$. 
Then using \eqref{b-k+2} and \eqref{gamma-k+2} in Proposition \ref{JPhiestimates} and chain rule of derivative, we have 
\begin{align}
    \langle \frac{\partial}{\partial \gamma} v_\cC^{\gamma}({\bf y}, \tau), |{\bf y}|^2 - 2k\rangle_\cH &= \langle \frac{\partial v_\cC^\gamma}{\partial \tilde b}\frac{\partial \tilde b}{\partial \gamma} + \frac{\partial v_\cC^\gamma}{\partial \tilde \Gamma}\frac{\partial \tilde \Gamma}{\partial \gamma}, |{\bf y}|^2 - 2k\rangle_\cH \notag\\
    &= -\frac{\sqrt{2(n-k)}||\psi_{k+2}||^2_\cH}{4|\tau|^2} +O(\frac{\kappa}{|\tau|^2})<0.
\end{align}
for all $\tau\leq \tau_0$.
\end{proof}
For ellipsoidal flows, we have the following monotonicity.
\begin{claim}[rescaling monotonicity for ellipsoidal flow]\label{Rescaling monotonicity for ellipsoidal flow}
There exist $\kappa>0$ and $\tau_*>-\infty$ with the following significance. Given any reference point $(a^1, \dots, a^k)\in \mathbb{R}^k_{+}$ with $\sum_{j=1}^{k}{a^{j}}=e^{\frac{\gamma_0}{2}}$ such that the sequence of shifted reference ellipsoidal flows $\mathcal{M}^{a}_i$ is $\kappa$-quadratic at time $\tau_0 \leq \tau_*$, then for $i\geq i_0$, where $i_0$ only depends on $\kappa, \tau_0$,  the corresponding truncated profile functions $v_{\cC,i}^{\gamma}$ satisfy
\begin{align}
\langle \frac{\partial}{\partial \gamma} v_{\cC,i}^{\gamma}({\bf y}, \tau_0), |{\bf y}|^2 - 2k\rangle_\cH<0
\end{align}
for  $a\in K$ in Claim \ref{claim_boundedness} and all $|\gamma| \leq 10\kappa|\tau_0|$, where 
\begin{equation}
    d(K, \partial\mathbb{R}^{k}_{+})\geq c=c(\kappa, \tau_0)>0\quad \textrm{Diam}(K)\leq C=C(\kappa, \tau_0)<\infty.
\end{equation}
\end{claim}
\begin{proof}[Proof of Claim]
    Suppose towards a contradiction for all $\kappa>0$,  there  is $\tau_0>0$ very negative and a sequence of shifted ellipsoidal flows $\mathcal{M}^{a_i}_i$ is $\kappa$-quadratic at time $\tau_0$ but with 
    \begin{align}
\langle \frac{\partial}{\partial \gamma}|_{\gamma=\gamma_i} v_{\cC,i}^{\gamma}({\bf y}, \tau_0), |{\bf y}|^2 - 2k\rangle_\cH\geq 0
\end{align}
holds for $|\gamma_i|\leq 10\kappa |\tau_0|$
By Claim \ref{claim_boundedness} (compactness), after passing through a subsequence, we obtain a limit flow $\mathcal{M}^{\infty}$ which is $\kappa$-quadratic at time $\tau_0$  with truncated profile function $v_{\cC,\infty}^{\gamma}$ in local smooth convergence sense and $\gamma_i \to \gamma_{\infty}\in [-10\kappa |\tau_0|, 10\kappa |\tau_0|]$ as $i\to \infty$ but
 \begin{align}
\langle \frac{\partial}{\partial \gamma}|_{\gamma=\gamma_\infty} v_{\cC,\infty}^{\gamma}({\bf y}, \tau_0), |{\bf y}|^2 - 2k\rangle_\cH\geq 0,
\end{align} 
which is a contradiction with \eqref{negative derivative} in Claim \ref{Rescaling monotonicity} (rescaling monotonicity).
\end{proof}

{\bf Equivariant transversal  homotopic spectral approximation.} We recall that by \eqref{sym oval ellipsoid limit} and \eqref{end symmetry points value},  $\mathcal{M}_i^{\bar{a}}$ is the sequence of reference shifted $\mathrm{O}(k)\times\mathrm{O}(n+1-k)-$symmetric  ellipsoidal flows with ellipsoidal parameter $\bar{a}\in \textrm{Diag}\cap\mathbb{R}^{k}_{+}$, vanishing positive mode projection 
\begin{equation}
\mathfrak{p}_{+}v^{\bar{a}}_{\cC, i}(\tau_{0})=0
    \end{equation}
and is $\frac{\delta\kappa}{100}$-quadratic at $\tau_0$ for some small $\delta>0$ and converging to the reference $\mathrm{O}(k)\times\mathrm{O}(n+1-k)-$symmetric 
ancient oval $\mathcal{M}^{\textrm{sym}}$ and
\begin{equation}
 E_i(\bar{a})=(\frac{\bar{e}_i}{k}, \dots, \frac{\bar{e}_i}{k}).
\end{equation}
Let $K$ be the ${\bf S}_k$-symmetric compact set of $\mathbb{R}^{k}_{+}$ from Claim \ref{claim_boundedness}, and $D\subset \mathbb{R}^k_{+}$ be a compact smooth manifold with smooth manifold boundary satisfying $K\subset \textrm{Int}(D)\subset \mathbb{R}^k_{+}$  Now, we consider the continuous map $E_i: D \to \mathbb{R}^k_{+}\subset \mathbb{R}^k_{+}\cup\{\infty\}$, where $\approx$ means diffemorphism relation. By equivariant Tietze extension theorem in \cite{feragen2010equivariant} \cite{gleason1950spaces} \cite[Theorem 1.4.4]{palais1960classification}, we can extend it as a continuous map ${E}^{c}_i: S^{k} \to S^{k}$ without influence its value in the set $D$, where $\mathbb{R}^k_{+}\cup\{\infty\}$ is diffeomorphically identified with $S^k$. Then, by the equivariant smooth approximation theory in \cite[Corollary 1.12]{wasserman1969equivariant}\footnote{Discrete finite group is $0$-dimensional compact Lie group.}, for any $\epsilon>0$, we can find a ${\bf S}_k$-symmetric smooth map $E^{s}_i:  S^{k} \to S^{k}$ homotopic to the continuous map ${E}^{c}_i: S^{k} \to S^{k}$ and 
\begin{equation}
    |{E}^{s}_i-{E}^c_i|_{C^0}<\epsilon/2.
\end{equation}

By the theory of equivariant transversality in \cite[Proposition 2.2]{wasserman1975equivariant}, \cite[Theorem 6.14.4]{field2007dynamics} \cite[Lemma 3.2]{wasserman1969equivariant} which uses \cite[Theorem 1.35]{milnor1958lectures}, we can find a ${\bf S}_k$-symmetric smooth map $\tilde{E}_{i}$ homotopic to $E^{s}_i$ such that $\tilde{E}_{i}$ is transversal to the $k-1$-dimensional  smoothly $\eta_i$-approximated simplicial cylinder boundary $\partial \cylinders^k_{\frac{\delta\kappa}{|\tau_0|}, i}(\bar{e})$, which is denoted by 
\begin{equation}
    \tilde{E}_{i} \pitchfork\partial\cylinders^k_{\frac{\delta\kappa}{|\tau_0|}, i}(\bar{e}_i)
\end{equation}
and due to compactness of $S^{k-1}$ or $D$, it satisfies
\begin{equation}
     |\tilde{E}_i-{E}^{s}_i|_{C^0}<\epsilon/2
\end{equation}
In particular,  the ${\bf S}_k$-symmetric map $\tilde{E}_i|_{D}\pitchfork  \partial \cylinders^k_{\frac{\delta\kappa}{|\tau_0|}, i}(\bar{e}_i)$ are homotopic in $D$ and
\begin{equation}\label{trans-smooth approximation}
     |\tilde{E}_i|_{D}-E_i|_{D}|_{C^0}<\epsilon.
\end{equation}
By $K\subset \textrm{Int}(D)$ implying $\tilde{E}_i|_{D}^{-1}(\partial \cylinders^k_{\frac{\delta\kappa}{|\tau_0|}, i}(\bar{e}_i))\cap \partial D=\emptyset$, ${\bf S}_k$-symmetry as well as diagonal surjectivity of $E_{i}|_{\textrm{Diag}}$ in \eqref{ends oval surjectivity}, we know that 
\begin{equation}
  \tilde{E}_i|_{D}^{-1}(\partial\cylinders^k_{\frac{\delta\kappa}{|\tau_0|}, i}(\bar{e}_i))\cap \textrm{Diag}\not=\emptyset.
\end{equation}
By ${\bf{S}}_k$-equivariant transversality property 
\begin{equation}
  \tilde{E}_i|_{D}^{-1}(\partial\cylinders^k_{\frac{\delta\kappa}{|\tau_0|}, i}(\bar{e}_i))
\end{equation}
is disjoint union of (at least one)  connected components of ${\bf{S}}_k$-symmetric $(k-1)$-dimensional orientable  closed smooth hypersurfaces  in $D\subset \mathbb{R}^k_{+}$.

 By the Jordan-Brouwer separation theorem, each of them is an orientable manifold and dividing $\mathbb{R}^{k}$ into two  smooth connected regions of $\mathbb{R}^{k}$ with the same smooth submanifolds boundary and only one of the two regions enclosed by the connected component of inverse image is compact. According to the surjectivity along the diagonal and  continuity of $\tilde{E}_i$ at $\bar{a}$, we can find the innermost connected component in $\tilde{E}_i|_{D}^{-1}(\partial\cylinders^k_{\frac{\delta\kappa}{|\tau_0|}, i}(\bar{e}_i))$ with positive distance to the point $\bar{a}$.  We denote the compact smooth region  $\tilde{\Omega}^{k}_i$ enclosed by this component  and its boundary mainifold $\partial \tilde{\Omega}^{k}_i$ as the innermost component of $ \tilde{E}_i|_{D}^{-1}(\partial\cylinders^k_{\frac{\delta\kappa}{|\tau_0|}, i}(\bar{e}_i))$, and in particular by construction
 \begin{equation}\label{boundary to boundary map}
    \tilde{E}_i(\partial \tilde{\Omega}^{k}_i)\subset \partial\cylinders^k_{\frac{\delta\kappa}{|\tau_0|}, i}(\bar{e}_i).
 \end{equation}
 We denote the different points in $\partial \tilde{\Omega}^{k}_i\cap \textrm{Diag}$ by $\tilde{a}^{\pm}_{i}$.
 By ${\bf{S}}_k$-symmetry and that $\partial \tilde{\Omega}^{k}_i$ is an embedded submanifold in  $\mathbb{R}^k$, the interior of the  diagonal segment $\tilde{\Omega}^{k}_i\cap \textrm{Diag}$ connecting $\tilde{a}^{\pm}_{i}$ is in the interior of the compact smooth region $\tilde{\Omega}^{k}_i$. Namely
 \begin{equation}\label{segment inclusion}
     \bar{a}\in\textrm{Int}(\tilde{\Omega}^{k}_i\cap \textrm{Diag})\subset   \textrm{Int}(\tilde{\Omega}^{k}_i).
 \end{equation}

 We first illustrate that for all ellipsoidal parameter $a\in  \tilde{\Omega}^{k}_i$, the associated ellipsoidal flow $\mathcal{M}^{a}_{i}$ is $\kappa$-quadratic at time $\tau_0$. To this end, given $\kappa>0$ and $\tau_0<0$, we define $Q_i(\kappa, \tau_0)$ to be the set of all elements $a\in \mathbb{R}^k$ such that $\mathcal{M}^{a}_i$ is $\kappa$-quadratic at $\tau_0$.
\begin{claim}\label{saturationomega}
There are $\kappa>0$ small enough and $\tau_0\leq \tau_*$ negative enough  and $i$ large enough such that
\begin{equation}
     \tilde{\Omega}^{k}_i \cap Q_i(\kappa, \tau_0) = \tilde{\Omega}^{k}_i.
\end{equation}
In particular, for any $a\in \tilde{\Omega}^{k}_i$, the associated shifted ellipsoidal flow $\mathcal{M}^{a}_i$ with ellisoidal parameter $a$ is $\kappa$-quadratic at $\tau_0$.
\end{claim}
\begin{proof}[proof of Claim]
    By construction and \eqref{segment inclusion}, we know that for $i$ large enough, the point $\bar{a}$ in \eqref{sym oval ellipsoid limit}   satisfies $\bar{a}\in   \tilde{\Omega}^{k}_i\cap Q_i(\kappa, \tau_0) $. We only need to show $\tilde{\Omega}^{k}_i \cap Q_i(\kappa, \tau_0)$ in the sense of subspace topology is a open and closed subset of the connected (actually also path-connected) $k$-dimensional manifold $\tilde{\Omega}^{k}_i$ with boundary. Closedness follows from the definition of $\kappa$-quadraticity and the construction of $\tilde{\Omega}^{k}_i$ as the compact region enclosed by $\partial \tilde{\Omega}^{k}_i$ the innermost component of $ \tilde{E}_i|_{D}^{-1}(\partial\cylinders^k_{\frac{\delta\kappa}{|\tau_0|}, i}(\bar{e}_i))$. Then we show that any boundary point ${a_i}\in\tilde{\Omega}^{k}_i\cap Q_i(\kappa, \tau_0) $ is a interior point of $\tilde{\Omega}^{k}_i \cap Q_i(\kappa, \tau_0)$ in the subspace topology sense, which implies openness. Suppose towards a contradiction that the above $a_i$ is not interior point of  $\tilde{\Omega}^{k}_i \cap Q_i(\kappa, \tau_0)$, then the property that  $\mathcal{M}^{a_i}_{i}$ is $\kappa'$-quadratic at time $\tau_0$ must be saturated. i.e.,  at least one of the weak inequalities for its associated truncated profile function $v^{a_i}_{\cC}$
\begin{equation}\label{kappacond1}
    \left\| v^{a_i}_{\cC}(\mathbf{y}, \tau_{0})-\sqrt{2(n-k)}+\frac{\sqrt{2(n-k)}}{4|\tau_{0}|}(|\mathbf{y}|^2-2k)\right\|_{\mathcal{H}}\leq \frac{\kappa}{|\tau_{0}|},
\end{equation}
or
\begin{equation}\label{kappacond2}
        \sup_{\tau\in [2\tau_{0}, \tau_{0}]} |\tau|^{\frac{1}{50}} \|v^{a_i}(\cdot ,\tau)-\sqrt{2(n-k)}\|_{C^{4}(B(0, 2|\tau|^{{1}/{100}}))}\leq 1.
\end{equation}
must be an equality. By Theorem \ref{point_strong} (strong $\kappa$-quadraticity), it is strongly $\kappa$-quadratic from time $\tau_{0}$.
After passing to a subsequence the $\mathcal M_i^{a_i}$ converge to a limit oval $\mathcal M \in \mathscr{A}'$, which by \eqref{kappacond1} and \eqref{kappacond2} is $\kappa$-quadratic at time $\tau_0$. Thus, by Theorem ($\kappa$-quadraticity implies strong $\kappa$-quadraticity), the oval $\mathcal M$ is strongly $5 \kappa$-quadratic from time $\tau_0$. In particular, $\rho^{\mathcal M}(\tau)=|\tau|^{1 / 10}$ is an admissible graphical radius function for $\tau \leqslant \tau_0$, so inequality \eqref{kappacond2} is a strict inequality for $i$ large enough. Therefore, it must be the case that
\begin{align}\label{saturationcond}
\left\| v^{a_i}_{\cC}(\mathbf{y}, \tau_{0})-\sqrt{2(n-k)}+\frac{\sqrt{2(n-k)}}{4|\tau_{0}|}(|\mathbf{y}|^2-2k)\right\|_{\mathcal{H}}= \frac{\kappa}{|\tau_{0}|}.
\end{align}
On the other hand, by the centering condition we have
\begin{equation}
\fp_{+}(v_{\cC}^{a_i}(\tau_0)-\sqrt{2(n-k)})=0,
\end{equation}
and  as in \cite[Lemma 2.7]{CDDHS} or \cite[Lemma 2.6]{CDZ_ovals} (quantitative Merle-Zaag type estimate) we have
\begin{equation}
\big\|\fp_{-}(v_{\cC}^{{a_i}}(\tau_{0}))\big\|_\cH \\ \leq \frac{\kappa}{100|\tau_0|},
\end{equation}
and the fact that $\mathscr{E}(\mathcal{M}^{a_i}_i) \in \cylinders^k_{\frac{\delta\kappa}{|\tau_0|}, i}(\bar{e}_i)$ with $\delta>0$ and transversal smooth approximation in \eqref{trans-smooth approximation} with $\epsilon>0$  small enough  yields
\begin{align}
\left\|\mathfrak{p}_0\left(v_C^{a_i}\left(\tau_0\right)\right)-\frac{\sqrt{2(n-k)}(|\mathbf{y}|^2-2k)}{4|\tau_{0}|}\right\|_{\mathcal{H}} \leqslant \frac{6 \kappa}{10\left|\tau_0\right|}.
\end{align}
Adding these estimates implies that
\begin{equation}\label{still_in_int}
\left\|v_{\cC}^{a_i}(\tau_{0})-\sqrt{2(n-k)}+\frac{\sqrt{2(n-k)}(|\mathbf{y}|^2-2k)}{4|\tau_{0}|}\right\|_\cH \leq \frac{4\kappa}{5|\tau_0|}.
\end{equation}
This contradicts \eqref{saturationcond} and implies openness. Therefore this nonempty open and closed subset of $ \tilde{\Omega}^{k}_i$ satisfies
\begin{equation}
     \tilde{\Omega}^{k}_i \cap Q_i(\kappa, \tau_0) = \tilde{\Omega}^{k}_i.
\end{equation}
\end{proof}

By Claim \ref{saturationomega}, for all ellipsoidal parameter in the segment $\tilde{\Omega}^{k}_i\cap \textrm{Diag}$ the associated shifted $O(k)\times O(n+1-k)$-symmetric  ellipsoidal flow is $\kappa$-quadratic at $\tau_0$, so we can apply Claim \ref {Rescaling monotonicity for ellipsoidal flow} ({rescaling monotonicity for ellipsoidal flow}). We also recall that in our oval construction the ellipsoidal parameter $a$ and rescaling parameter satisfy 
\begin{equation}
    |a|_1=a_1+\dots +a_{k}=e^{\frac{\gamma}{2}}\quad a\in \mathbb{R}^k_+
\end{equation}
Then we consider the  $O(k)\times O(n+1-k)$-symmetric ellipsoidal flows spectral values. By  Claim \ref {Rescaling monotonicity for ellipsoidal flow}, the single variable smooth function 
\begin{equation}
f_i(s)=\sum_{j=1}^{k}\tilde{E}^{j}_i(\frac{s}{k}, \dots, \frac{s}{k})
\end{equation}
 has positive derivative at every point for $s\in [|\tilde{a}^{-}_{i}|_1, |\tilde{a}^{+}_{i}|_1]$, where $\tilde{E}^{j}_i$ is the $j$-th component of map $\tilde{E}_i$ and the two diagonal points   $\tilde{a}^{\pm}_{i}\in \partial \tilde{\Omega}^{k}_i\cap \textrm{Diag}$ satisfy $|\tilde{a}^{-}_{i}|_1< |\bar{a}_{i}|_1< |\tilde{a}^{+}_{i}|_1$.  Therefore, $f_i$ strictly decreases monotonically in the interval $[|\tilde{a}^{-}_{i}|_1, |\tilde{a}^{+}_{i}|_1]$. In particular,   $i$ large enough the image points
\begin{equation}\label{to barycenter}
\tilde{E}_i({\tilde{a}^{\pm}_{i}})=\bar\mu_{i,\mp} = (\bar{e}_{i, \mp}/k,\dots, \bar{e}_{i, \mp}/k)\in \partial\cylinders^k_{\frac{\delta\kappa}{|\tau_0|}, i}(\bar{e}_i)\cap \textrm{Diag};\quad \bar{e}_{i, +}>\bar{e}_{i, -}
\end{equation}
are two different points in $\partial\cylinders^k_{\frac{\delta\kappa}{|\tau_0|}, i}(\bar{e}_i)\cap \textrm{Diag}$.

\textbf{Inductive degree argument}

Our next aim is to show
\begin{claim}\label{kboundary surjective} 
We have
    \begin{equation}
       \tilde{E}_i( \partial \tilde{\Omega}^{k}_i)=\partial\cylinders^k_{\frac{\delta\kappa}{|\tau_0|}, i}(\bar{e}_i)
    \end{equation}
\end{claim}
Before the proof, we need to recall the notation convention discussion before Claim \ref{claim_boundedness}, where we have designed our smoothed simplicial cylinder $\cylinders^k_{\frac{\delta\kappa}{|\tau_0|}, i}(\bar{e}_i)$, which is smooth, 
isotopically approximates in a $\eta_i$-distance close way to the simplicial cylinder $\cylinders_{\frac{\delta\kappa}{|\tau_0|}, i}(\bar{e}_i)$ with corners and preserves ${\bf{S}}_k$-symmetry.
\begin{proof}[proof of Claim \ref{kboundary surjective}] 
 We start the proof by some examples to 
illustrate the ideas when $k=1, 2, 3$ and then we inductively  prove the claim. 

If $k=1$, this corresponds to the $\mathbb{Z}_2\times O(n)$-symmetric ancient oval case and we are done by \eqref{to barycenter} and intermediate value theorem. If $k=2$,  this corresponds to the bubble-sheet oval case and it again follows from \eqref{to barycenter} and intermediate value theorem as well as ${\bf{S}}_2=\mathbb{Z}^2$-symmetry (see Figure 1 below).
\begin{figure}[ht]
    \centering
\includegraphics[width=6.3cm,height=3.3cm]{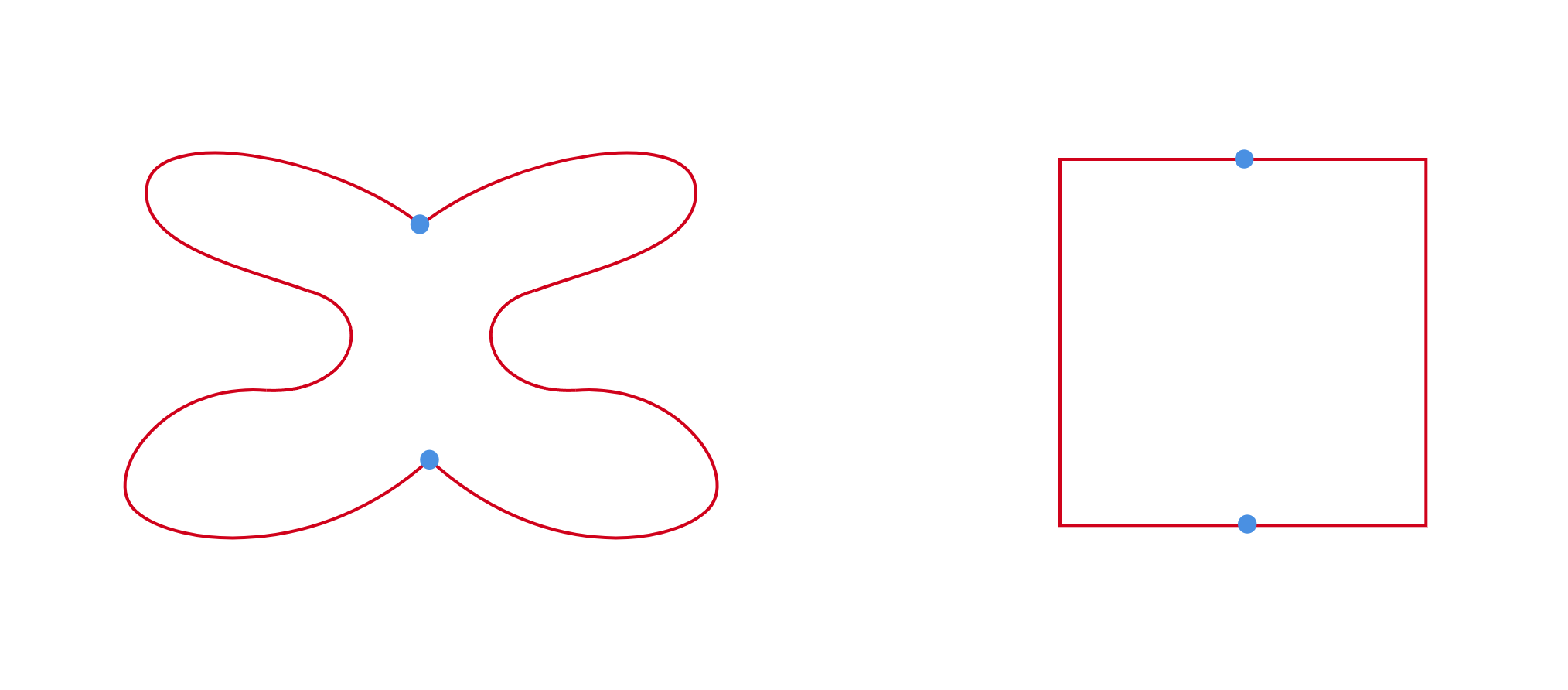} 
    \label{fig1}
    \caption{$k=2$ case, left is mapped to right.}
\end{figure}

For $k=3$ we illustrate ideas in Figure 2 below and we define the two dimensional planes \begin{equation}     \Sigma_{j}=\{\mu\in \mathbb{R}^3: \textrm{all coordinates are same except the } j\textrm{-th component} \}. \end{equation} 
Then the one dimensional smoothed rectangle  $\Gamma^{i}_{j}$ for $j=1, 2, 3$ can be decomposed into two curves only intersecting at the two barycenter points on the simplex $\simplex^{2}_{\frac{\delta\kappa}{|\tau_0|}, i}$.
\begin{equation}
    \Gamma^{i}_{j}=\Sigma^{1}_{j}\cap \partial\cylinders^3_{\frac{\delta\kappa}{|\tau_0|}, i}= \Gamma^{i, 1}_{j}\cup \Gamma^{i, 2}_{j} \quad j=1,\dots 3
\end{equation}
and
where $\Gamma^{i, 1}_{j}$ passes the  barycenter and $j$-th vertex of  $\simplex^{\pm, 2}_{\frac{\delta\kappa}{|\tau_0|}, i}$ and  $\Gamma^{i, 2}_{j}$ passes the the  barycenter and $j$-th $1$-dimensional edge (note the $j$-th vertex is not on it) of $\simplex^{\pm, 2}_{\frac{\delta\kappa}{|\tau_0|}, i}$   for $1\leq j\leq 3$. By \eqref{boundary to boundary map}, \eqref{to barycenter} and applying intermediate value theorem for map $ \tilde{E}_i|_{\partial \tilde \Omega_{i}^3}:\partial \tilde \Omega_{i}^3\to \partial\cylinders^3_{\frac{\delta\kappa}{|\tau_0|}, i}$ on $ \Sigma^{1}_{j}\cap \partial \tilde \Omega_{i}^3$ as well as using ${\bf{S}}_3$-symmetry, we have that it is at least  surjective to one of $\bigcup_{j=1}^{3}\Gamma^{i, 1}_{j}$, $\bigcup_{j=1}^{3}\Gamma^{i, 2}_{j}$. Suppose it is surjective to one of $\bigcup_{j=1}^{3}\Gamma^{i, 1}_{j}$. Then again by smooth approximation and the ${\bf{S}}_3$-symmetry, $ \tilde{E}_i$ restricted on a closed curve diffemorphic to $S^1$  on $\partial \tilde \Omega_{i}^3$, which smoothly approximates piece of closed curve ${(\cup_{j=1}^{2}\Sigma^{1}_{j})\cap \partial \tilde \Omega_{i}^3}$ has nonvanishing winding number\footnote{During the proof, we can compose a homeomorphic map from a domain with boundary  on simplicial cylinder to make the target of the map is Euclidean space and then we can apply the classical theory of degree winding number for continuous map from smooth manifolds with boundary to Euclidean space, see \cite{outerelo2009mapping, hirsch2012differential, guillemin2010differential}.} with respect some interior point on $\Gamma^{i, 1}_{3}$. Therefore, again by theory of degree and winding number and taking limit in the smooth curve approximation and again ${\bf{S}}_3$-symmetry, we know that  $  \tilde{E}_i|_{\partial \tilde \Omega_{i}^3}:\partial \tilde \Omega_{i}^3\to \partial\cylinders^3_{\frac{\delta\kappa}{|\tau_0|}, i}$ is surjective. In the second case if it is surjective to one of $\bigcup_{j=1}^{3}\Gamma^{i, 2}_{j}$, the argument is the same and the required surjectivity still holds. 
\begin{figure}[ht] \label{fig23}
    \centering
\includegraphics[width=4.2cm,height=3cm]{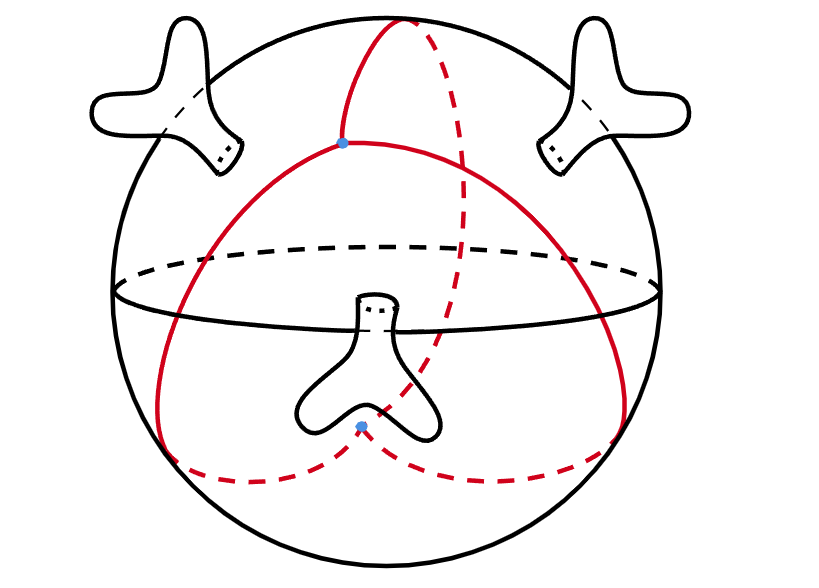}\quad\quad
\includegraphics[width=3.2cm,height=3cm]{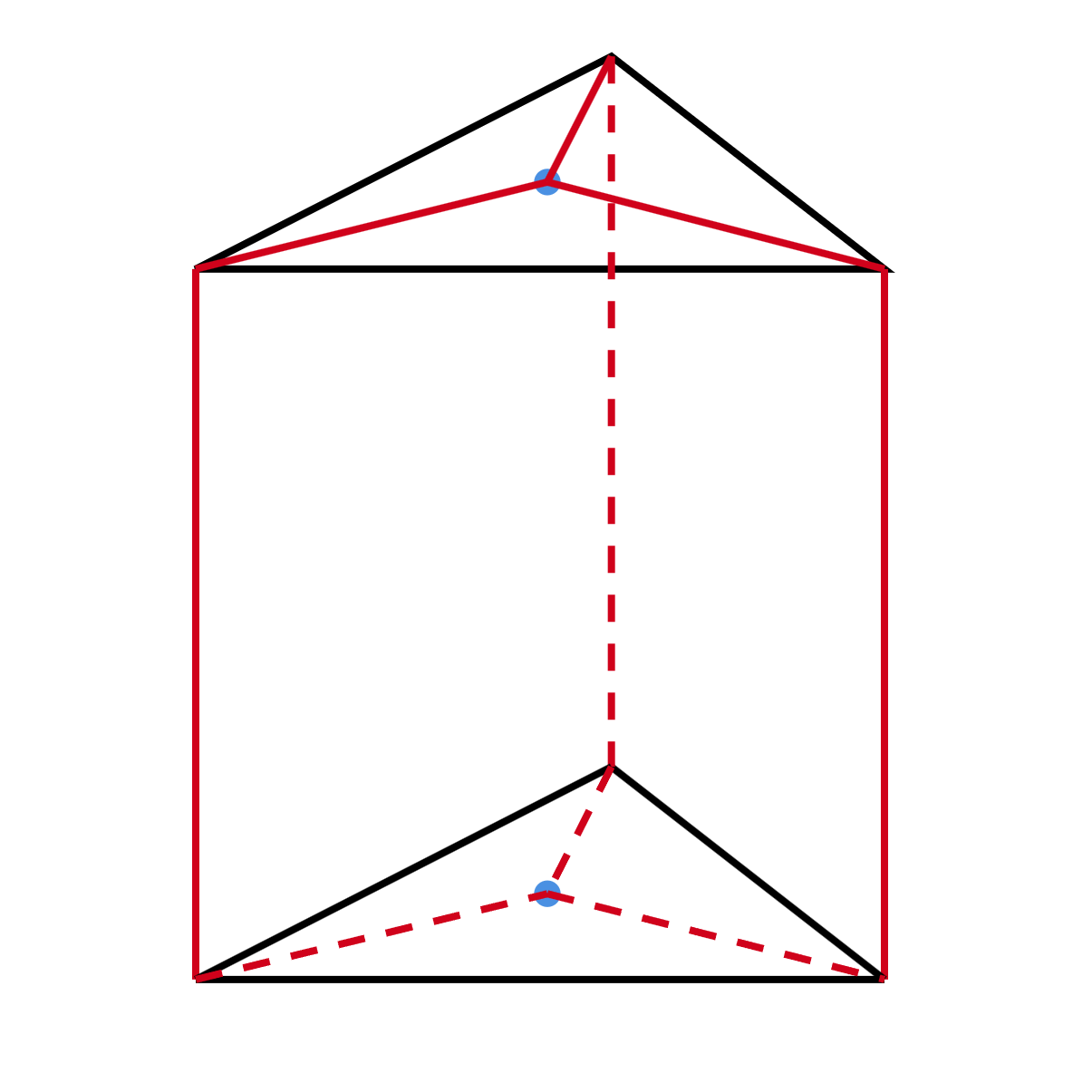}
    \caption{$k=3$ case: left is mapped to the right.}
\end{figure}

Now, we adapt the inductive argument to prove the surjectivity claim holds for all $k$ without the need of discussing different cases as when $k=3$. Suppose that the inductive assumption that
   \begin{equation}\label{inductive assumption}
        \tilde{E}_i|_{\partial \tilde \Omega_{i}^m}:\partial \tilde \Omega_{i}^m\to \partial\cylinders^m_{\frac{\delta\kappa}{|\tau_0|}, i}\,\,\textrm{is surjective for}\,  m\leq k-1
    \end{equation}
holds, we aim to show that the claim holds when $m=k$. We note that  $\tilde \Omega_{i}^{k}$ and the smoothly $\eta_i$-approximated $\cylinders^{k}_{\frac{\delta\kappa}{|\tau_0|}, i}$ with induced vertices and skeleton structure as well as the map 
$\tilde{E}_i|_{\partial \tilde \Omega_{i}^k}$ are both  ${\bf{S}}_{k}$-symmetric. We also define the two dimensional planes
\begin{equation}
    \Sigma_{j}=\{\mu\in \mathbb{R}^3: \textrm{all coordinates are same except the } j\textrm{-th component} \}.
\end{equation}
for $j=1,\dots k,$
\begin{equation}
    \Gamma^{i}_{j}=\Sigma^{1}_{j}\cap \partial\cylinders^k_{\frac{\delta\kappa}{|\tau_0|}, i}= \Gamma^{i, 1}_{j}\cup \Gamma^{i, 2}_{j} \quad j=1,\dots k
\end{equation}
where $\Gamma^{i, 1}_{j}$ passes the  barycenter and $j$-th vertex of  top/bottom simplicial faces of the simplicial cylinder $\simplex^{\pm, k-1}_{\frac{\delta\kappa}{|\tau_0|}, i}$ defined in \eqref{Top}-\eqref{Bottom} and  $\Gamma^{i, 2}_{j}$ passes the  barycenter and $j$-th $k-2$-dimensional surface (note the $j$-th vertex is not on it) of $\simplex^{\pm, k-1}_{\frac{\delta\kappa}{|\tau_0|}, i}$   for $1\leq j\leq k$.
Let us denote ${\bf{v}}^{j, \pm}_{i}\in \Gamma^{i, 1}_{j}$ be the $k$ vertices of $k-1$-dimensional $\simplex^{\pm, k-1}_{\frac{\delta\kappa}{|\tau_0|}, i} $ respectively  for $j=1, \dots, k$. We recall by \eqref{to barycenter} $\bar\mu_{i,\pm} = (\bar{e}_{i, \pm}/k,\dots, \bar{e}_{i, \pm}/k)=  \tilde{E}_i|_{\partial \tilde \Omega_{i}^k}(\tilde{a}^{\pm}_{i})$ are the barycenters of $\simplex^{\pm, k-1}_{\frac{\delta\kappa}{|\tau_0|}, i} $.  Connecting the first $k-1$ vertices ${\bf{v}}^{j, +}_{i}$ (${\bf{v}}^{j, -}_{i}$) for $j=1, \dots, k-1$ and barycenter $\bar\mu_{i,+}$ (barycenter $\bar\mu_{i,-}$) respectively, we obtain a $k-1$-dimensional simplex denoted by $\tetra({\bf{v}}^{1, +}_{i},\dots, {\bf{v}}^{k-1, +}_{i}, \bar\mu_{i,+})\subset \simplex^{+, k-1}_{\frac{\delta\kappa}{|\tau_0|}, i}$ (and $\tetra({\bf{v}}^{1, -}_{i},\dots, {\bf{v}}^{k-1, -}_{i}, \bar\mu_{i,-})\subset \simplex^{-, k-1}_{\frac{\delta\kappa}{|\tau_0|}, i}$), and an open simplex $\simplex({\bf{v}}^{1, +}_{i},\dots, {\bf{v}}^{k-1, +}_{i})$  generated by ${\bf{v}}^{1, +}_{i},\dots, {\bf{v}}^{k-1, +}_{i}$   (and the open simplex $\simplex({\bf{v}}^{1, -}_{i},\dots, {\bf{v}}^{k-1, -}_{i})$ generated by ${\bf{v}}^{1, -}_{i},\dots, {\bf{v}}^{k-1, -}_{i}$ ) are their $k-2$ dimensional faces respectively.   We note that $\simplex^{+, k-1}_{\frac{\delta\kappa}{|\tau_0|}, i}$  is the interior disjoint  union of $k$ number of simplicial images of ${\bf{S}}_k$ action on $\tetra({\bf{v}}^{1, +}_{i},\dots, {\bf{v}}^{k-1, +}_{i}, \bar\mu_{i,+})$ (and $\simplex^{-, k-1}_{\frac{\delta\kappa}{|\tau_0|}, i}$  is the interior disjoint  union of $k$ number of simplicial images of ${\bf{S}}_k$ action on  $\tetra({\bf{v}}^{1, -}_{i},\dots, {\bf{v}}^{k-1, -}_{i}, \bar\mu_{i,-})$). Let us also define
\begin{equation}
    \partial^{\circ}{\tetra}({\bf{v}}^{1, +}_{i}\!,\dots\!, {\bf{v}}^{k-1, +}_{i}, \bar\mu_{i,+})=\partial[{\tetra}({\bf{v}}^{1, +}_{i}\!,\dots\!, {\bf{v}}^{k-1, +}_{i}, \bar\mu_{i,+})]\setminus\simplex({\bf{v}}^{1, +}_{i}\!,\dots\!, {\bf{v}}^{k-1, +}_{i})
\end{equation}
and
\begin{equation}
    \partial^{\circ}{\tetra}({\bf{v}}^{1, -}_{i}\!,\dots\!, {\bf{v}}^{k-1, -}_{i}, \bar\mu_{i,-})=\partial[{\tetra}({\bf{v}}^{1, -}_{i}\!,\dots\!, {\bf{v}}^{k-1, -}_{i}, \bar\mu_{i,-})]\setminus\simplex({\bf{v}}^{1, -}_{i}\!,\dots\!, {\bf{v}}^{k-1, -}_{i})
\end{equation}
There are nature $\bar\mu_{i,+}-\frac{1}{k-1}\sum_{j=1}^{k-1}{\bf{v}}^{j, +}_{i}$-direction and  $\bar\mu_{i,-}-\frac{1}{k-1}\sum_{j=1}^{k-1}{\bf{v}}^{j, -}_{i}$-direction orthogonal projection homeomorphisms 
\begin{equation}
    P^{+}_{k-1, i}:  \partial^{\circ}{\tetra}({\bf{v}}^{1, +}_{i}\!,\dots\!, {\bf{v}}^{k-1, +}_{i}, \bar\mu_{i,+})\to \textrm{cl}[\simplex({\bf{v}}^{1, +}_{i}\!,\dots\!, {\bf{v}}^{k-1, -}_{i})]
\end{equation}
and
\begin{equation}
    P^{-}_{k-1, i}:  \partial^{\circ}{\tetra}({\bf{v}}^{1, -}_{i}\!,\dots\!, {\bf{v}}^{k-1, -}_{i}, \bar\mu_{i,-})\to \textrm{cl}[\simplex({\bf{v}}^{1, -}_{i}\!,\dots\!, {\bf{v}}^{k-1, -}_{i})]
\end{equation}
where $\textrm{cl}$ means closure operation. In addition, they preserve the discrete ${\bf S}_{k-1}$-symmetry and map barycenter to barycenter. Let $\mathfrak{C}^{+}_{k-1, i}$ be the $k-1$-dimensional cylinder  obtained by crossing $ \partial^{\circ}{\tetra}({\bf{v}}^{1, -}_{i}\!,\dots\!, {\bf{v}}^{k-1, -}_{i}, \bar\mu_{i,-})$ along the diagonal segment inside $\cylinders^{k}_{\frac{\delta\kappa}{|\tau_0|}, i}$,  and let $\mathfrak{c}^{+}_{k-1, i}$ be the $k-1$-dimensional  ${\bf{S}}_{k-1}$-symmetric simplicial cylinder  obtained by crossing $\textrm{cl}[\simplex({\bf{v}}^{1, -}_{i}\!,\dots\!, {\bf{v}}^{k-1, -}_{i})]$ along the diagonal segment inside $\cylinders^{k}_{\frac{\delta\kappa}{|\tau_0|}, i}$.
Similarly we can define $\mathfrak{C}^{-}_{k-1, i}$ and ${\bf{S}}_{k-1}$-symmetric simplicial cylinder $\mathfrak{c}^{-}_{k-1, i}$. Then, we have naturally induced projection homeomorphisms
\begin{equation}
     \mathbf{P}^{+}_{k-1, i}:\mathfrak{C}^{+}_{k-1, i}\to \mathfrak{c}^{+}_{k-1, i} \quad  \mathbf{P}^{-}_{k-1, i}:\mathfrak{C}^{-}_{k-1, i}\to \mathfrak{c}^{-}_{k-1, i}. 
\end{equation}
\begin{figure}[ht] \label{fig24}
    \centering
\includegraphics[width=3cm,height=3cm]{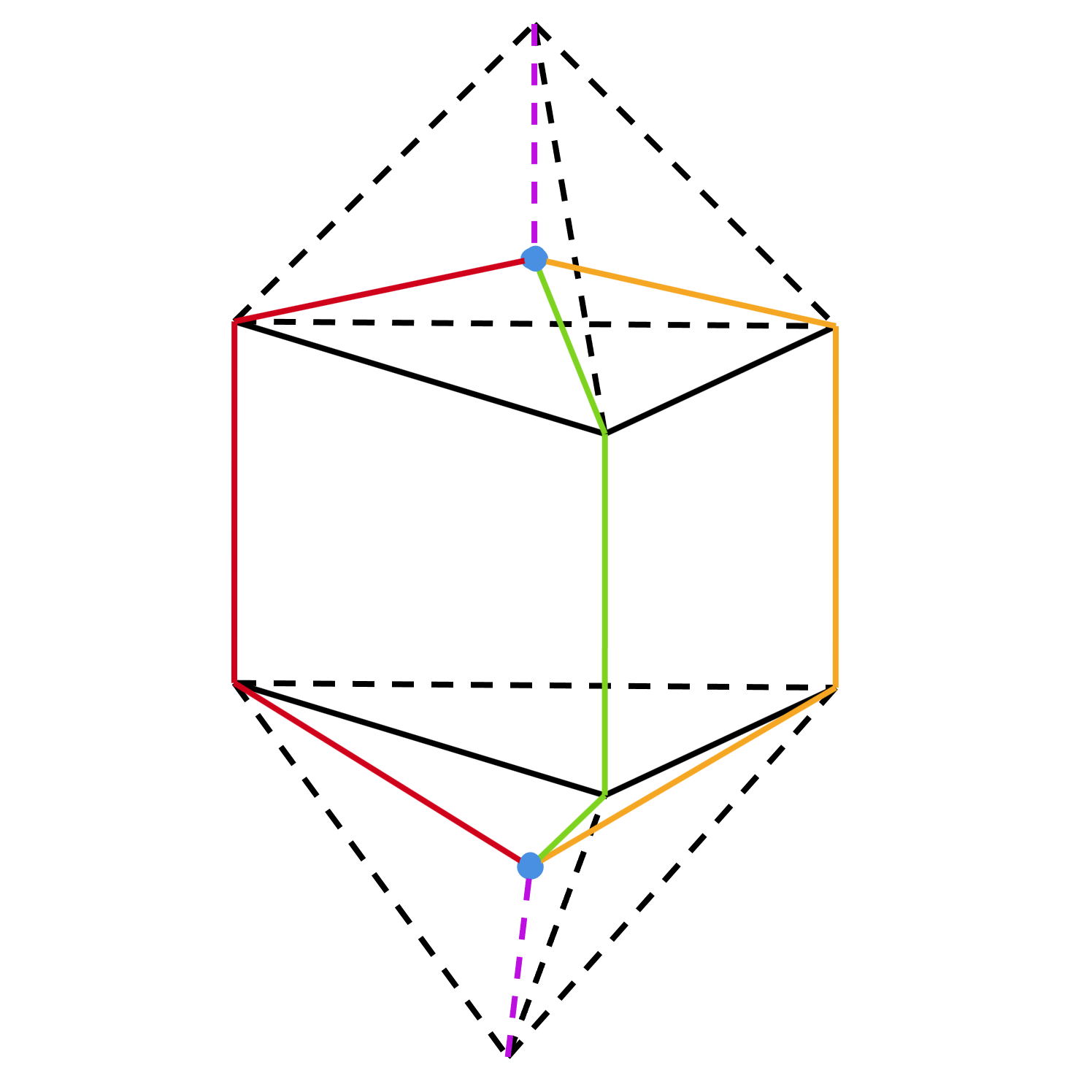}\quad\quad\includegraphics[width=3.2cm,height=2.5cm]{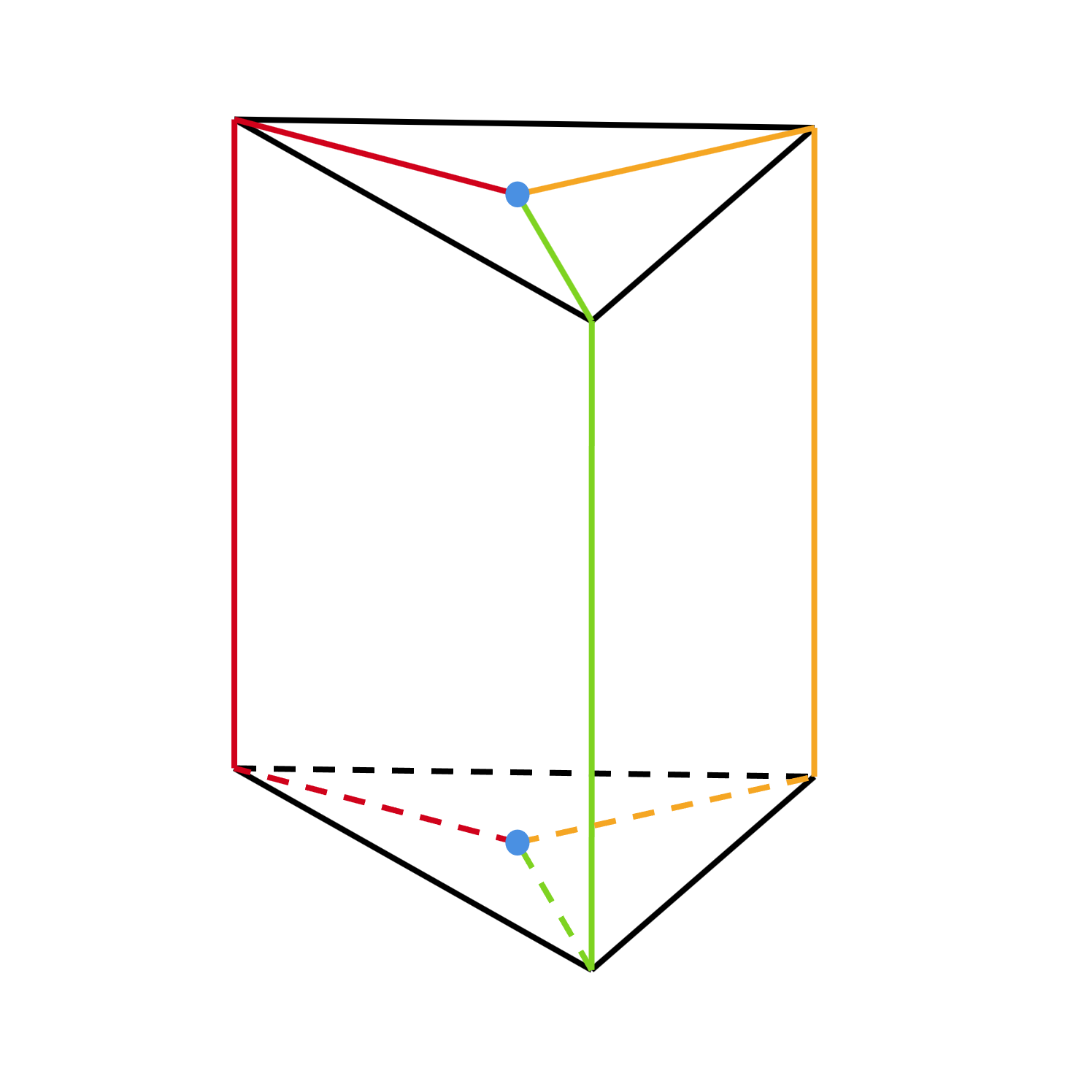}
    \caption{$k=4$ case: left is projected to the right.}
\end{figure}

We note that all of our constructions and operations above preserve the discrete permutation symmetry. Now, we can decompose the $(k-1)$-dimensional manifold $\partial \tilde{\Omega}^{k}_{i}$ into $k$ equal pieces according to the ${\bf{S}}_k$-symmetry. Let $\check{\Omega}_{i}^{k-1}\subset \partial \tilde{\Omega}^{k}_{i}$ be one of the piece whose boundary $\partial  \check{\Omega}_{i}^{k-1}$ has identical symmetry as $\mathfrak{c}^{\pm}_{k-1, i}$
(meaning they are obtained by the same way of intersecting with some equal coordinates subspaces). Moreover $\check{\Omega}_{i}^{k-1}\subset \partial \tilde{\Omega}^{k}_{i}$ is smooth manifolds with corners\footnote{One can also apply differential topology techniques for manifolds with corners, c.f. \cite{nielsen1981transversality, joyce2009manifolds, margalef1992differential}.}. Together with a 
${\bf S}_k$ equivariant smooth approximation and limiting argument, we can apply the inductive assumption \eqref{inductive assumption} when $m=k-1$ or can adapt similar argument as in case $m=k-1$  to imply the following ${\bf{S}}_{k-1}$-symmetric composed continuous maps with ${\bf{S}}_{k-1}$-symmetric domains an images
\begin{equation}
    \mathbf{P}^{\pm}_{k-1, i}\circ  \tilde{E}_i|_{\partial \check{\Omega}_{i}^{k-1}}:\partial  \check{\Omega}_{i}^{k-1}\to \mathfrak{c}^{\pm}_{k-1, i} 
\end{equation}
are surjective. Therefore 
\begin{equation}
 \tilde{E}_i|_{\partial \check \Omega_{i}^{k-1}}:\partial \tilde \Omega_{i}^{k-1}\to \mathfrak{C}^{\pm}_{k-1, i} 
\end{equation}
are surjective.

Moreover, by construction $\mathfrak{C}^{\pm}_{k-1, i}$ is a $k-2$-dimensional topological sphere dividing the $k-1$-dimensional  topological sphere $\partial\cylinders^{k}_{\frac{\delta\kappa}{|\tau_0|}, i}$ into two parts. By the ${\bf{S}}_k$-symmetry of the domain, range and map, we know that the winding number of $\tilde{E}_i|_{\partial  \check\Omega_{i}^{k-1}}$ with respect to points in at least one part of them is nonvanishing. By theory of degree and 
winding number and again ${\bf{S}}_k$-symmetry as well as a smooth approximation argument as  before,  the map
\begin{equation}
        \tilde{E}_i|_{\partial \tilde \Omega_{i}^k}:\partial \tilde \Omega_{i}^k\to \partial\cylinders^{k}_{\frac{\delta\kappa}{|\tau_0|}, i}
    \end{equation}
is surjective. This completes the proof of Claim \ref{kboundary surjective}.
\end{proof}

By \eqref{to barycenter}, Claim \ref{kboundary surjective} and  recalling \eqref{boundary to boundary map} and using theory of degree and 
winding number, we have that the continuous map  
    \begin{equation}
        \tilde{E}_i:\tilde \Omega_{i}^k\to \textrm{Int}(\mathbb{R}^{k}_{+}) 
    \end{equation}
surjectively covers $\cylinders^{k}_{\frac{\delta\kappa}{|\tau_0|}, i}$ for $\kappa>0, \delta>0$ small enough, $\tau\leq \tau_*$ negative enough and $i<\infty$ large enough. Sending $\epsilon\to 0$ in \eqref{trans-smooth approximation},  we obtain that
\begin{equation}
 \cylinders_{\frac{\delta\kappa}{|\tau_0|}, i} \subset   E_i(\Omega^k_i)
\end{equation}
holds for $\kappa>0$ small enough and $\tau\leq \tau_*$ negative enough and $i$ large enough, $\delta>0$ small enough.  Namely, for any given $\mu \in \cylinders^{k}_{\frac{\delta \kappa}{|\tau_0|}}$  for $i$ large enough we can find a sequence of ellipsoidal flows $\mathcal{M}^{a_{i}}_{i}$ with ellipsoidal parameters ${a}_{i}$ such that they are $\kappa$-quadratic at $\tau_0$ and
and their profile functions $v^{a_{i}}$ satisfy
\begin{equation}
\mathfrak{p}_{+}(v^{{a}_{i}}_{\cC, i})(\tau_{0})=0
    \end{equation}
By Claim \ref{claim_boundedness} (compactness) after passing through subsequential limit as $i\to \infty$,  we can find a time-shifted ancient oval  
\begin{equation}
\lim_{i\to \infty}\mathcal{M}^{{a}_{i}}_{i}=\mathcal{M}^{\infty}\in\mathscr{A}'
\end{equation}
that is $\kappa$-quadratic at $\tau_0$  and whose profile function $v^{{\infty}}$ satisfies
\begin{equation}
\mathfrak{p}_{+}(v^{{\infty}}_{\cC, i})(\tau_{0})=0
    \end{equation}
    and
\begin{equation}
    \mathscr{E}(\mathcal{M}^{\infty})(\tau_0)=\mu.
\end{equation}

This concludes the proof of Theorem \ref{prescribed_eccentricity_restated}.

\end{proof}

Before we state the following existence results for the moduli space, we recall the definition of $\mathcal{E}$ in \cite{CDZ_ovals}.
Given $\kappa>0$ and $\tau_0>-\infty, \overline{\mathcal{A}}_\kappa\left(\tau_0\right)$ is the set of all $k$-ovals which are $\kappa$-quadratic at time $\tau_0$, and the corresponding truncated profile function $v_{\cC}$ satisfies orthogonality conditions by Proposition \ref{prop_orthogonality}.
\begin{align*}
\begin{gathered}
\mathfrak{p}_{+}\left(v_{\cC}\left(\mathbf{y}, \tau_0\right)-\sqrt{2(n-k)}\right)=0, \\
\left\langle v_{\cC}^{\mathcal{M}}\left(\mathbf{y}, \tau_0\right), y_i y_j\right\rangle_{\mathcal{H}}=0, \quad 1 \leq i<j \leq k
\end{gathered}
\end{align*}
and
$$
\left\langle v_{\cC}^{\mathcal{M}}\left(\mathbf{y}, \tau_0\right)+\frac{\sqrt{2(n-k)}\left(|\mathbf{y}|^2-2 k\right)}{4\left|\tau_0\right|},| \mathbf{y}|^2-2 k\right\rangle_{\mathcal{H}}=0.
$$
Then we define the spectral ratio map $\mathcal{E}=\mathcal{E}\left(\tau_0\right): \bar{\mathcal{A}}_\kappa\left(\tau_0\right) \rightarrow \Delta^{k-1}$ at $\tau_0$ :
$$
\mathcal{E}(\mathcal{M})=\mathcal{E}\left(\tau_0\right)(\mathcal{M})=\left(\frac{\left\langle v_{\cC}^{\mathcal{M}}\left(\tau_0\right), y_1^2-2\right\rangle_{\mathcal{H}}}{\left\langle v_{\cC}^{\mathcal{M}}\left(\tau_0\right),|\mathbf{y}|^2-2 k\right\rangle_{\mathcal{H}}}, \ldots, \frac{\left\langle v_{\cC}^{\mathcal{M}}\left(\tau_0\right), y_k^2-2\right\rangle_{\mathcal{H}}}{\left\langle v_{\cC}^{\mathcal{M}}\left(\tau_0\right),|\mathbf{y}|^2-2 k\right\rangle_{\mathcal{H}}}\right)
$$

\begin{corollary}[existence with prescribed spectral value]\label{prescribed_eccentricity_restated'} 
There exist small constants $\delta>0$,  $\kappa>0$, $\kappa'>0$  and very negative constant $\tau_\ast>-\infty$ with the following significance.  For every $\tau_{0}\leq \tau_{*}$ and every 
\begin{equation}\label{targetmu'}
    \mu'=(\mu_1', \dots, \mu_k')\in \simplex_{\frac{\delta\kappa}{c}}(1)
\end{equation}
where 
\begin{equation}
\sum_{i=1}^{k}\mu_i'=1,\quad c = \frac{\sqrt{2(n-k)}\||\mathbf{y}|^2-2k\|^2_{\mathcal{H}}}{4}
\end{equation}
and $\simplex_{\frac{\delta\kappa}{c}}(1)$ denotes the $k-1$-dimensional simplex centered at the point $(\frac{1}{k}, \dots, \frac{1}{k})$ with size $\frac{\delta\kappa}{c}$, there exists an ancient oval  $\mathcal{M}\in\mathcal{A}^{\circ}$ which is $\kappa'$-quadratic at $\tau_0$ after suitable time shifting by parameter $\beta(\mathcal{M})$ and parabolic rescaling by parameter $\gamma(\mathcal{M})$, the transformed flow $\mathcal{M}^{\beta(\mathcal{M}), \gamma(\mathcal{M})}$ is $\kappa$-quadratic at $\tau_0$  and satisfies
\begin{equation}
    \mathcal{E}(\mathcal{M}^{\beta(\mathcal{M}), \gamma(\mathcal{M})})=   \mathcal{E}(\mathcal{M}^{\beta(\mathcal{M}), \gamma(\mathcal{M})})(\tau_0)=\mu'.
\end{equation}
Moreover, such transformation parameter $(\beta(\mathcal{M}), \gamma(\mathcal{M}))$ depending on the $\mathcal{M}$ continuously. 
\end{corollary}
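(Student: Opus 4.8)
The plan is to derive the corollary from Theorem~\ref{prescribed_eccentricity_restated} by translating between the absolute spectral map $\mathscr{E}$ and the spectral \emph{ratio} map $\mathcal{E}$, and then to extract a base oval in $\mathcal{A}^{\circ}$ together with its canonical recentering parameters via Corollary~\ref{prop_continuous_orthogonality}. The one piece of algebra driving the translation is $\sum_{j=1}^{k}(y_j^2-2)=|\mathbf{y}|^2-2k$: for any $\mathcal{M}$ in the domain $\bar{\mathcal{A}}_\kappa(\tau_0)$ of $\mathcal{E}$ this gives $\langle v^{\mathcal{M}}_{\cC}(\tau_0),|\mathbf{y}|^2-2k\rangle_{\mathcal{H}}=-\sum_{i}\mathscr{E}^i(\mathcal{M})$, hence $\mathcal{E}^j(\mathcal{M})=\mathscr{E}^j(\mathcal{M})/\sum_{i}\mathscr{E}^i(\mathcal{M})$ whenever this sum is nonzero. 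Since $\bar e=c/|\tau_0|$, scaling the simplex $\simplex_{\delta\kappa/c}(1)$ by $\bar e$ produces exactly the middle cross-section $\{\sum_j\mu_j=\bar e\}$ of the simplicial cylinder $\cylinders_{\delta\kappa/|\tau_0|}(\bar e)$, so the size parameters in the two statements are compatible.

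First I would, given $\mu'\in\simplex_{\delta\kappa/c}(1)$ (so $\sum_j\mu'_j=1$), set $\mu:=\bar e\,\mu'$. By the scaling just noted $\mu\in\cylinders_{\delta\kappa/|\tau_0|}(\bar e)$, so Theorem~\ref{prescribed_eccentricity_restated} supplies a time-shifted ancient oval $\mathcal{N}\in\mathscr{A}'$ that is $\kappa$-quadratic at $\tau_0$ with $\mathscr{E}(\mathcal{N})=\mu$. I then check $\mathcal{N}\in\bar{\mathcal{A}}_\kappa(\tau_0)$: its $y_i$ and $y_iy_j$ modes vanish by the $\mathbb{Z}_2^k$-symmetry inherited from the ellipsoidal flows; its positive mode vanishes at $\tau_0$ because $\mathcal{N}\in\mathscr{A}'=\mathcal{S}(\mathscr{A})$ and the shift map $\mathcal{S}$ of Proposition~\ref{time-shift} is built to enforce \eqref{shift vanish +}; and $\langle v^{\mathcal{N}}_{\cC}(\tau_0),|\mathbf{y}|^2-2k\rangle_{\mathcal{H}}=-\sum_j\mu_j=-\bar e$, which is precisely the quadratic-mode normalization \eqref{quadractic_mode1}. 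The identity above then gives $\mathcal{E}(\mathcal{N})=\mu/\bar e=\mu'$.

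Next I would peel off the base oval. Since $\mathcal{N}$ is $\kappa$-quadratic at $\tau_0$, satisfies \eqref{quadractic_mode1}--\eqref{positive_mode1}, and — using $\mathscr{A}'=\mathcal{S}\big(\bigcup_{\gamma}\mathcal{D}_{\gamma}(\mathcal{A}^{\circ})\big)$ together with the non-degeneracy supplied by Claim~\ref{claim_boundedness} (compactness) in the proof of Theorem~\ref{prescribed_eccentricity_restated} (the limiting ellipsoidal parameter stays in the compact set $K$ away from $\partial\mathbb{R}^k_+$, so no line-splitting occurs) — it can be written $\mathcal{N}=\mathcal{M}^{\beta_0,\gamma_0}$ with $\mathcal{M}\in\mathcal{A}^{\circ}$ a genuine ancient oval and $(\beta_0,\gamma_0)$ a priori bounded (the shift by Proposition~\ref{S_property}, the scale by Claim~\ref{claim_boundedness}); in particular $\mathcal{N}\in\mathcal{A}'_{\kappa}(\tau_0)$. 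The smallness estimates \eqref{bregion}--\eqref{tildebeta} from the proof of Theorem~\ref{prescribed_eccentricity_restated} then show that the associated rescaled parameters $(b_0,\Gamma_0)$ at time $\tau_0$ satisfy $|\tau_0|^2 b_0^2+\Gamma_0^2\le 100(\kappa')^2$ for a suitable $\kappa'\le C\kappa$, i.e. they lie in the uniqueness disc of Corollary~\ref{prop_continuous_orthogonality}, and that $\mathcal{M}$ itself is $\kappa'$-quadratic at $\tau_0$. Applying Corollary~\ref{prop_continuous_orthogonality} with reference flow $\mathcal{M}_0=\mathcal{M}$ yields an open neighborhood $U(\mathcal{M},\kappa,\tau_0)$ and continuous recentering parameters $(\beta(\cdot),\gamma(\cdot))$ on it; uniqueness of the zero of the recentering map $\Psi_{\tau_0}$, guaranteed by the positive Jacobian of Proposition~\ref{JPhiestimates}, forces $(\beta(\mathcal{M}),\gamma(\mathcal{M}))=(\beta_0,\gamma_0)$, so $\mathcal{M}^{\beta(\mathcal{M}),\gamma(\mathcal{M})}=\mathcal{N}$ is $\kappa$-quadratic at $\tau_0$ with $\mathcal{E}(\mathcal{M}^{\beta(\mathcal{M}),\gamma(\mathcal{M})})=\mu'$, and the continuous dependence of $(\beta(\mathcal{M}),\gamma(\mathcal{M}))$ on $\mathcal{M}$ is part of Corollary~\ref{prop_continuous_orthogonality}.

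The main obstacle I expect is exactly this parameter-matching step: $\mathcal{N}$ is produced by Theorem~\ref{prescribed_eccentricity_restated} only as a limit of \emph{shifted ellipsoidal} flows, so its decomposition $\mathcal{N}=\mathcal{M}^{\beta_0,\gamma_0}$ over a base oval $\mathcal{M}\in\mathcal{A}^{\circ}$ is implicit, and one must certify that this $(\beta_0,\gamma_0)$ — not merely bounded, but of the right \emph{order} in $\kappa$ and $|\tau_0|$ — coincides with the canonical recentering of Corollary~\ref{prop_continuous_orthogonality}. This requires stitching together Proposition~\ref{S_property}, Claim~\ref{claim_boundedness}, and the quantitative bounds \eqref{bregion}--\eqref{tildebeta}. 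Everything else — the conversion $\mathcal{E}^j=\mathscr{E}^j/\sum_i\mathscr{E}^i$, the identification of $\bar e\,\simplex_{\delta\kappa/c}(1)$ with the mid-slice of $\cylinders_{\delta\kappa/|\tau_0|}(\bar e)$, and the verification that $\mathcal{N}\in\bar{\mathcal{A}}_\kappa(\tau_0)$ — is routine bookkeeping.
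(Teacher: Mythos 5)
The first half of your argument coincides with the paper's proof: lifting $\mu'$ to $\mu=\bar e\,\mu'$ in the mid-slice of the simplicial cylinder, invoking Theorem \ref{prescribed_eccentricity_restated} to obtain $\mathcal{N}\in\mathscr{A}'$ with $\mathscr{E}(\mathcal{N})=\mu$, checking the normalizations \eqref{quadractic_mode1}--\eqref{positive_mode1} (the mixed and linear modes vanishing by symmetry), and converting via $\mathcal{E}^j=\mathscr{E}^j/\sum_i\mathscr{E}^i$ is exactly what the paper does (its commutative diagram is your ratio identity).

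The second half, however, has a genuine gap. To identify the decomposition $\mathcal{N}=\mathcal{M}^{\beta_0,\gamma_0}$ with $\mathcal{M}\in\mathcal{A}^{\circ}$ with the canonical recentering of Corollary \ref{prop_continuous_orthogonality}, you assert that \eqref{bregion}--\eqref{tildebeta} give $|\tau_0|^2b_0^2+\Gamma_0^2\le 100(\kappa')^2$ and that $\mathcal{M}$ itself is $\kappa'$-quadratic at $\tau_0$. But \eqref{bregion} and \eqref{tildebeta} control only the time-shift parameter ($b$, resp.\ $\tilde\beta$); they say nothing about the dilation $\gamma_0$. The scale of the base oval in $\mathcal{A}^{\circ}=\mathcal{A}^{\circ}_1$ is fixed by the Huisken-density normalization at $t=-1$, not by any condition at $\tau_0$, and Claim \ref{claim_boundedness} only bounds the limiting ellipsoidal parameter between constants $c(\kappa,\tau_0)$ and $C(\kappa,\tau_0)$, which is far from the needed $|\Gamma_0|\le 10\kappa'$; likewise nothing you cite guarantees that $\mathcal{M}$, \emph{without} recentering, is $\kappa'$-quadratic at the prescribed $\tau_0$ (the asymptotics over $\mathcal{A}^{\circ}$ are not uniform -- that nonuniformity is the reason the $\kappa$-quadraticity machinery exists). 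There is also a circularity: Corollary \ref{prop_continuous_orthogonality} applies only for $\tau_0\le\tau_*(\mathcal{M}_0,\kappa)$ with $\tau_*$ depending on the reference flow, whereas your $\tau_0$ is fixed before $\mathcal{M}$ is produced, so invoking it with $\mathcal{M}_0=\mathcal{M}$ requires a uniformity argument you have not supplied. The paper sidesteps all of this: it never needs $(\beta_0,\gamma_0)$ to be small, but simply inverts the shift map $\mathcal{S}$ of Proposition \ref{time-shift}, a homeomorphism of $\mathscr{A}$ onto $\mathscr{A}'$ with globally continuous inverse (giving continuity of $\beta(\mathcal{M})$), and reads the dilation parameter off the decomposition $\mathscr{A}^{\circ}=\bigcup_{\gamma}\mathcal{D}_{\gamma}(\mathcal{A}^{\circ})$, whose normalization pins down $\gamma$ continuously. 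To salvage your route you would have to prove the missing smallness of $\Gamma_0$ (say, by comparing the quadratic coefficient $\tfrac{1}{4|\tau_0|}$ forced by $\kappa$-quadraticity of $\mathcal{N}$ with the profile of $\mathcal{M}$ at renormalized time $\tau_0+\gamma_0$, via a compactness argument), or else drop the appeal to Corollary \ref{prop_continuous_orthogonality} and argue continuity of the parameters as the paper does.
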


\begin{proof}
We note that for any $\mu'\in \simplex_{\frac{\delta\kappa}{c}}(1)$, we can naturally lift to an element $\mu$ in $\cylinders_{\frac{\delta\kappa}{|\tau_0|}}(\bar{e})$ through the map $p: \cylinders_{\frac{\delta\kappa}{|\tau_0|}}(\bar{e}) \mapsto \simplex_{\frac{\delta\kappa}{c}}(1)$ defined by $p(\mu) = \frac{1}{|\mu|}\mu$. By Theorem \ref{prescribed_eccentricity_restated}, there exists a time-shifted ancient oval  $\mathcal{M}'\in\mathscr{A}'$ that is $\kappa$-quadratic at $\tau_0$  and satisfies
\begin{equation}
    \mathscr{E}(\mathcal{M}')= \mathscr{E}(\mathcal{M}')(\tau_0)=\mu.
\end{equation} 
By the definition of $\mathscr{A}'$, we have 
\begin{align}
\left\langle v^{\mathcal M'}_{\cC}({\bf{y}} , \tau_{0})-\sqrt{2(n-k)}, 1\right\rangle_\cH = 0.
\end{align}
Moreover, by definition of $\bar e$, we also have
\begin{align}
\left\langle v^{\mathcal M'}_{\cC}({\bf{y}} , \tau_{0})+\frac{\sqrt{2(n-k)}\||\mathbf{y}|^2-2k\|^2_{\mathcal{H}}}{4|\tau_0|}, |\mathbf{y}|^2-2k\right\rangle_\cH = 0.
\end{align}
Thus, $\mathcal{M}' \in \mathcal{A}'$ by definition. This implies following diagram commutes
$$
\begin{tikzcd}
\mathscr{A}' \arrow[r, "\mathscr{E}"] \arrow[d, "\pi"'] 
& \cylinders_{\frac{\delta\kappa}{|\tau_0|}}(\bar{e}) \arrow[d, "p"] \\
\mathcal{A}' \arrow[r, "\mathcal{E}"'] 
& \simplex_{\frac{\delta\kappa}{c}}(1)
\end{tikzcd}
$$
Then by Proposition \ref{time-shift}, we may find a $\mathcal M\in \mathscr{A}^{\circ}$ and a unique $\beta(\mathcal M)$ depending on $\mathcal M$ continuously to satisfy $\mathcal M^{\beta(\mathcal M)} = \mathcal M'$. Moreover, we also have $\mathcal M\in \mathcal{A}^{\circ}$ and $\mathcal M^{\beta(\mathcal M),1} = \mathcal M'$.
\end{proof}

\section{Classification and moduli space} \label{sec-classification and moduli}
In this section, we give the proof of our main results in Theorem \ref{classification_theorem} and Theorem \ref{cor_moduli}. 

\begin{proof}[Proof of Theorem \ref{classification_theorem}]
Let $\mathcal{M}^1$ be any $k$-oval in $\mathbb{R}^{n+1}$ which always has $\mathbb{Z}_2^k\times O(n+1-k)$-symmetry by \cite[Corollary 1.4]{CDZ_ovals}. Then, we work in coordinates such that the tangent flow at $-\infty$ is given by $\mathbb{R}^k\times S^{n-k}(\sqrt{2(n-k)|t|})$ and such that the $\mathrm{O}(n+1-k)$-symmetry is in the $x_{k+1}\dots x_{n+1}$-plane centered at the origin. By  Proposition \ref{prop_orthogonality} (orthogonality), given any $\kappa'>0$ and $\tau_0\leq\tau_\ast(\mathcal{M}^1,\kappa')$ after a suitable space-time transformation we can assume that the truncated renormalized profile function $v_{\cC}^{\mathcal{M}^1}$ of $\mathcal{M}^1$  is $\kappa'$-quadratic at time $\tau_0$ and satisfies
\begin{equation}
\fp_+ \big(v_{\cC}^{\mathcal{M}^1}(\tau_0)-\sqrt{2(n-k)}\big)=0,
\end{equation}
\begin{equation}
\Big\langle v_{\cC}^{\mathcal{M}^1}(\tau_0) +\frac{\sqrt{2(n-k)}(|\mathbf{y}|^2-2k)}{4|\tau_0|},|\mathbf{y}|^2-2k\Big\rangle_\cH=0,
\end{equation}
\begin{equation}
   \Big\langle v_{\cC}^{\mathcal{M}^1}(\tau_0), y_{i}y_{j}\Big\rangle_\cH=0 \quad i\not=j.
\end{equation}
Let $\delta>0$, $\kappa>0$, $\tau_\ast>-\infty$ be constants such that Theorem    \ref{prescribed_eccentricity_restated} (existence with prescribed spectral value) and Theorem \ref{spectral uniqueness restated} (spectral uniqueness) apply. Possibly after decreasing $\tau_\ast$ and choosing $\kappa'\ll \delta$ we can arrange that
\begin{equation}
\mathscr{E}(\mathcal{M}^1)
\in \cylinders_{\frac{\delta\kappa}{|\tau_0|}}(\bar{e}).
\end{equation}
Thus, by Theorem \ref{prescribed_eccentricity_restated}  (existence with prescribed spectral value) there exists a $k$-oval $\mathcal{M}^2\in \mathscr{A}'$ such that it is $\kappa$-quadratic at time $\tau_0$ (up to transformation it also belongs to the class $\mathcal{A}^{k, \circ}$ by our construction of $\mathscr{A}{'}$) and it satisfies
\begin{equation}
 \mathscr{E}(\mathcal{M}^2)=\mathscr{E}(\mathcal{M}^1),
 \end{equation}
and in particular by time-shifted oval class $\mathscr{A}'$ definition in \eqref{shifted oval class} it also sastisfies
 \begin{equation}
\fp_+ \big(v_{\cC}^{\mathcal{M}^2}(\tau_0)-\sqrt{2(n-k)}\big)=0.
\end{equation}
Recall also that by construction of ancient ovals in  $\mathscr{A}'$ we have
\begin{equation}
\Big\langle v_{\cC}^{\mathcal{M}^2}(\tau_0),  y_{i}y_{j}\Big\rangle_\cH=0\quad i\not=j.
\end{equation}
Hence, we can apply Theorem \ref{spectral uniqueness restated} (spectral uniqueness) to conclude that the $k$-ovals $\mathcal{M}^1$ and $\mathcal{M}^2$ coincide. We have thus shown that any $k$-oval in $\mathbb{R}^{n+1}$ belongs, up to parabolic rescaling and space-time rigid motion, to the oval class  $\mathcal{A}^{k, \circ}$ constructed in \cite{DH_ovals}. This completes the proof of  Theorem \ref{classification_theorem} (classification of $k$-ovals).
\end{proof}

Then we discuss the proof of Theorem \ref{cor_moduli} (classification of ancient ovals and moduli space). We recall that $\mathcal{X}$ is the quotient space of ancient ovals whose tangent flow at $-\infty$ coincides with $\mathbb{R}^k\times S^{n-k}(\sqrt{2(n-k)|t|})$, modulo spacetime rigid motions and parabolic dilations. This space is endowed with the topology induced by local smooth convergence of solutions. 

\begin{proof}[Proof of Theorem \ref{cor_moduli}]
The classification of ancient ovals follows from Theorem \ref{classification_theorem} (classification of $k$-ovals) and Conjecture \ref{oval-koval} verified by Bamler-Lai in \cite{BL1,BL2}. Then we discuss the moduli space $\mathcal{X}$.
\begin{claim}\label{claim5.1}
We have the homeomorphism between Hausdorff spaces. 
\begin{equation}
     \mathcal{X}\cong \mathcal{A}^{k, \circ} / \mathbf{S}_k    
\end{equation}
\end{claim}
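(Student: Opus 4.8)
The plan is to realize the asserted homeomorphism as the descent of the tautological map. I would let $q$ denote the quotient map from the space of $k$-ovals with blowdown $\mathbb{R}^k\times S^{n-k}$, topologized by local smooth convergence, onto $\mathcal{X}$, and set $\Psi=q|_{\mathcal{A}^{k,\circ}}$. Since $\mathbf{S}_k\subset O(k)\subset O(n+1)$ acts by rigid motions, $\Psi$ is constant on $\mathbf{S}_k$-orbits and hence factors through a continuous map $\bar\Psi\colon \mathcal{A}^{k,\circ}/\mathbf{S}_k\to\mathcal{X}$. The target $\mathcal{A}^{k,\circ}/\mathbf{S}_k$ is Hausdorff, being a finite-group quotient of the Hausdorff space $\mathcal{A}^{k,\circ}$, so a successful proof that $\bar\Psi$ is a homeomorphism will also exhibit $\mathcal{X}$ as Hausdorff. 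It then remains to verify that $\bar\Psi$ is bijective with continuous inverse.

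For surjectivity I would invoke Conjecture \ref{oval-koval} (now a theorem via \cite{BL1,BL2}) to reduce to $k$-ovals and then Theorem \ref{classification_theorem}, which places every $k$-oval into $\mathcal{A}^{k,\circ}$ after a space--time rigid motion and a parabolic dilation. It is convenient to record at this point a \emph{normalization} of an arbitrary $k$-oval: rotate its tangent flow at $-\infty$ to the standard round cylinder, translate so that it becomes extinct at the space--time origin, parabolically dilate so that \eqref{bubblesheetcond} holds, and rotate in $O(k)$ so that the $\mathbb{Z}_2^k$-symmetry axes become the coordinate axes; by the above this lands in $\mathcal{A}^{k,\circ}$.

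The core step is injectivity, together with uniqueness of this normalization modulo $\mathbf{S}_k$. I would take $\mathcal{M}_1,\mathcal{M}_2\in\mathcal{A}^{k,\circ}$ related by a composition $g$ of an $O(n+1)$-rotation, a space--time translation and a parabolic dilation, and peel $g$ apart in three steps. First, both flows vanish at the space--time origin, which rotations and dilations fix, so the translational part of $g$ is trivial. Second, writing $\lambda$ for the dilation factor and using that rotations preserve the Gaussian weight, the change of variables $x=\lambda y$ shows that the normalization \eqref{bubblesheetcond} for $\mathcal{M}_2$ equals Huisken's Gaussian density of $\mathcal{M}_1$ based at the origin evaluated at the rescaled time $\lambda^{-2}$; since an ancient oval is not self-similar this is strictly monotone in $\lambda$, forcing $\lambda=1$. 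Third, $g=R\in O(n+1)$ now preserves the standard round cylinder (the common tangent flow at $-\infty$), hence its axis and orthogonal complement, so $R=R_1\oplus R_2$ with $R_1\in O(k)$ and $R_2\in O(n+1-k)$, and $R_2$ is absorbed into the $O(n+1-k)$-symmetry of $\mathcal{M}_1$. Finally, both $\mathcal{M}_i$ are $\mathbb{Z}_2^k$-symmetric $k$-ovals, so — via the asymptotics of Definition \ref{def-koval} and the recentering of Proposition \ref{prop_orthogonality} — their quadratic bending tensor at $-\infty$ is positive definite with the coordinate axes as eigendirections; $R_1$ conjugates one such tensor to the other, hence permutes these eigendirections and acts inside each repeated-eigenvalue block by a symmetry of $\mathcal{M}_1$. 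Consequently $\mathcal{M}_2=\sigma\,\mathcal{M}_1$ for some $\sigma\in\mathbf{S}_k$, which gives injectivity; running the same argument with $\mathcal{M}_1$ replaced by an arbitrary $k$-oval already in normalized position shows the normalization above is canonical modulo $\mathbf{S}_k$.

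For continuity of $\bar\Psi^{-1}$ I would take $[\mathcal{N}_i]\to[\mathcal{N}]$ in $\mathcal{X}$, lift to representatives $\mathcal{N}'_i\to\mathcal{N}'$ in local smooth convergence, and normalize to $\mathcal{M}_i=g_i\mathcal{N}'_i\in\mathcal{A}^{k,\circ}$. The rotational part of $g_i$ stays in the compact group $O(n+1)$; its translational part equals $-$(extinction point of $\mathcal{N}'_i$) and therefore converges; and its dilation factor is determined continuously by \eqref{bubblesheetcond} and confined to a compact subinterval of $(0,\infty)$ because the flows are noncollapsed and converge smoothly. Hence along any subsequence $g_i\to g$ and $\mathcal{M}_i\to g\mathcal{N}'$, which is a compact $k$-oval in normalized position, so by the uniqueness just established $g\mathcal{N}'=\sigma\,\mathcal{M}$ for some $\sigma\in\mathbf{S}_k$, where $\mathcal{M}\in\mathcal{A}^{k,\circ}$ normalizes $\mathcal{N}$; thus $[\mathcal{M}_i]\to[\mathcal{M}]$ in $\mathcal{A}^{k,\circ}/\mathbf{S}_k$ and $\bar\Psi^{-1}$ is continuous. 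The hardest part will be the third step of injectivity — excluding a rotation $R_1\in O(k)$ that is not a signed permutation — which is precisely where the $k$-oval hypothesis (positive definiteness of the quadratic bending), the $\mathbb{Z}_2^k$-symmetry, and Proposition \ref{prop_orthogonality} are used essentially, whereas the Huisken-monotonicity argument for the dilation and the compactness argument for the inverse should be routine.
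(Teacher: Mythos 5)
Your overall architecture matches the paper's: factor the tautological map through $\mathcal{A}^{k,\circ}/\mathbf{S}_k$, kill the translation because every element of $\mathcal{A}^{k,\circ}$ becomes extinct at the space--time origin, kill the parabolic dilation via the Gaussian-density normalization \eqref{bubblesheetcond} together with strict Huisken monotonicity for non-selfsimilar flows, reduce the rotation to an $O(k)\oplus O(n+1-k)$ block, and then handle the topology. Your compactness argument for continuity of $\bar\Psi^{-1}$ is fine and in fact more explicit than the paper, which instead argues via the properly discontinuous $\mathbf{S}_k$-action on the locally compact Hausdorff space $\mathcal{A}^{k,\circ}$ and openness of the quotient map.

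However, the third step of your injectivity argument has a genuine gap, and it is exactly the step you flag as hardest. First, the ``quadratic bending tensor at $-\infty$'' coming from Definition \ref{def-koval} is the same for every $k$-oval: the asymptotics there is $-\frac{\sqrt{2(n-k)}}{4|\tau|}(|\mathbf{y}|^2-2k)$, a multiple of the identity in the $\mathbf{y}$-variables, hence invariant under all of $O(k)$. It therefore carries no directional information and cannot constrain $R_1$ at all; the object that actually distinguishes the axes is the finer, finite-time spectral data, namely the matrix of neutral-mode projections $\big(\langle v^{\beta,\gamma}_{\cC}(\tau_0),\psi_{ij}\rangle_{\cH}\big)$ taken at a fixed sufficiently negative $\tau_0$ after the recentering of Proposition \ref{prop_orthogonality} (the paper's $\mathcal{W}(\mathcal{M})$). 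Second, even once you replace your tensor by this spectral matrix, your assertion that $R_1$ ``acts inside each repeated-eigenvalue block by a symmetry of $\mathcal{M}_1$'' does not follow from the conjugation/eigendirection argument: knowing that two (or more) diagonal spectral entries coincide does not by itself imply that the flow is invariant under rotations in that block. This is precisely where the paper invokes the spectral uniqueness theorem (Theorem \ref{spectral uniqueness restated}): e.g.\ if $\mathcal{W}(\mathcal{M}^1)$ is a multiple of the identity, spectral uniqueness identifies $\mathcal{M}^1=\mathcal{M}^2$ with the unique $O(k)\times O(n+1-k)$-symmetric oval, and in general one compares $\mathcal{M}_1$ with its rotated copy, checks that $\kappa$-quadraticity and the orthogonality normalizations \eqref{cross}--\eqref{positive_mode} are preserved, and concludes coincidence of the flows, so that only a permutation in $\mathbf{S}_k$ can remain. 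Your proposal never invokes spectral uniqueness, so the conclusion $\mathcal{M}_2=\sigma\,\mathcal{M}_1$ is unsubstantiated as written; the fix is to run the paper's spectral-matrix plus spectral-uniqueness argument (or an equivalent rigidity statement) at a fixed recentered time $\tau_0$.
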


\begin{proof}[proof of Claim \ref{claim5.3}]
By   Conjecture \ref{oval-koval} and Theorem \ref{prescribed_eccentricity_restated} (existence with prescribed spectral value), we know that  all  the ancient ovals  with $\mathbb{R}^{k}\times S^{n-k}$ as tangent flow at $-\infty$ are inside $\mathcal{A}^{k, \circ}$.

Consider the canonical map $q: \mathcal{A}^{k, \circ}\to \mathcal{X}$ that sends any $\mathcal{M}\in\mathcal{A}^{k, \circ}$ to its equivalence class $[\mathcal{M}]\in\mathcal{X}$.  By Theorem \ref{classification_theorem} (classification of $k$-ovals) and the above, the map $q$ is well defined and surjective. Suppose now that $q(\mathcal{M}^1)=q(\mathcal{M}^2)$. Then, by definition of our equivalence relation, $\mathcal{M}^2$ is obtained from $\mathcal{M}^1$ by a space-time rigid motion and parabolic dilation. Since all elements at of the class $\mathcal{A}^{k, \circ}$ become extinct at the origin at time zero, there cannot be any nontrivial space-time translation, and thanks to the condition about the Huisken density at time $-1$ there cannot be any nontrivial parabolic dilation either. So $\mathcal{M}^2$ is obtained from $\mathcal{M}^1$ by a rotation $R\in \mathrm{SO}(n+1)$. Since the rotation fixes the tangent flow at $-\infty$ and the solutions are $SO(n+1-k)$-symmetric about last $n+1-k$ coordinates, the rotation  $R$ must be  rotating  the $\mathbb{R}^{k}$ factor.  To proceed, let us consider the spectral  matrix
\be
\mathcal{W}(\mathcal{M})=(\big\langle v^{\mathcal{M}^{\beta,\gamma}}_\cC(\tau_0),  \psi_{ij}\big\rangle_\cH )_{k\times k}
\ee
where $\psi_{\ell\ell}=y_\ell^2-2$  for $\ell=1,\dots, k$ and $\psi_{ij}=2 y_iy_j$ for $1\leq i\not=j\leq k$ and $\beta,\gamma$ are time-shift parameter and parabolic dilation parameter depending on $\mathcal{M}$ are chosen according to Proposition \ref{prop_orthogonality} such that $\mathcal{M}^{\beta,\gamma}$ is $\kappa$-quadratic  at $\tau_0$ and its profile function satisfies the following orthogonality conditions,
\begin{equation}\label{bg ON+}
    \mathfrak{p}_{+}\big(v^{\beta, \gamma}_{\cC}({\bf{y}}, \tau_{0})-\sqrt{2(n-k)}\big)=0,
\end{equation}
\begin{equation}\label{bg ON y2-2k}
         \Big\langle v^{\beta, \gamma}_{\cC}({\bf{y}}, \tau_0) +\frac{\sqrt{2(n-k)}(|\mathbf{y}|^2-2k)}{4|\tau_0|},|\mathbf{y}|^2-2k\Big\rangle_\cH=0,
    \end{equation}
if we choose $\tau_{0}$ negative enough and $\kappa>0$ small enough depending on $\mathcal{M}$.
Note that $\mathcal{W}(\mathcal{M})$ is diagonal for any $\mathcal{M}\in \mathcal{A}^{k, \circ}$ thanks to the $\mathbb{Z}_2^k$-symmetry and in particular  $\mathcal{W}(\mathcal{M}^1)$ and $\mathcal{W}(\mathcal{M}^2)$ are diagonal matrices and they have the same diagonal entries up to some subgroup of ${\bf S}_k$-permutation. If $\mathcal{W}(\mathcal{M}^1)$ is a multiple of the identity matrix, then applying Theorem \ref{spectral uniqueness restated} (spectral uniqueness) we see that $\mathcal{M}^1=\mathcal{M}^2$ is the unique $\mathrm{O}(k)\times \mathrm{O}(n+1-k)$-symmetric $k$-oval, and if $\mathcal{W}(\mathcal{M}^1)$ has  distinct eigenvalues then, taking also into account again the $\mathbb{Z}_2^k$-symmetry, we see that either $\mathcal{M}^2=\mathcal{M}^1$ or $\mathcal{M}^2$ is obtained from $\mathcal{M}^1$ by the rotation induced by the action of some discrete subgroup  of  $\mathbf{S}_{k}$. This shows that the induced map $\bar{q}: \mathcal{A}^{k, \circ}/\mathbf{S}_{k}\to \mathcal{X}$ is bijective. Notice that $\mathbf{S}_{k}$ is properly discontinuous homeomorphism group action on the locally compact Hausdorff space $\mathcal{A}^{k, \circ}$, where the locally compact Hausdorff property of $\mathcal{A}^{k, \circ}$ follows from Theorem \ref{spectral uniqueness restated} (spectral uniqueness) and continuity of spectral map $\mathcal{E}$,  so $\mathcal{A}^{k, \circ}/\mathbf{S}_{k}$ is a Hausdorff space and the quotient map is open map. Hence, by definition of the quotient topology, the map $\bar{q}$ is also continuous and so is a homeomorphism. 

\end{proof}

\begin{claim}\label{metrizable}
  The space $\mathcal{A}^{k, \circ}$ of ancient ovals with $\mathbb{R}^k\times S^{n-k}$ as its tangent flow at $-\infty$  endowed with locally smooth convergence is a $k-1$-dimensional open manifold and is metrizable. 
\end{claim}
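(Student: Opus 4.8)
The plan is to cover $\mathcal{A}^{k,\circ}$ by charts modelled on open subsets of $\mathbb{R}^{k-1}$, built by composing the local continuous recentering of Corollary~\ref{prop_continuous_orthogonality} with the spectral ratio map $\mathcal{E}(\,\cdot\,)(\tau_0)$ into the open simplex $\Delta_{k-1}$, and to read off metrizability separately. Metrizability is the soft part: the topology of local smooth convergence on ancient flows is generated by a countable family of $C^m$-distances of local graphical representations over a fixed exhaustion of spacetime ($m\in\mathbb{N}$), and since two distinct ancient ovals must differ on some compact spacetime region this family separates points, so the topology is induced by a metric. (Once the manifold structure below is in place, metrizability also follows from it together with second countability via Urysohn.)

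For the charts, fix $\mathcal{M}_0\in\mathcal{A}^{k,\circ}$ and choose $\kappa>0$ small and $\tau_0\ll 0$ below the thresholds of Theorem~\ref{spectral uniqueness restated} (spectral uniqueness), of the existence results Theorem~\ref{prescribed_eccentricity_restated} and Corollary~\ref{prescribed_eccentricity_restated'}, and of $\tau_\ast(\mathcal{M}_0,\kappa)$ from Corollary~\ref{prop_continuous_orthogonality}, with recentering tolerance $\kappa'\ll\delta\kappa$. Corollary~\ref{prop_continuous_orthogonality} gives a neighborhood $U$ of $\mathcal{M}_0$ in $\mathcal{A}^{k,\circ}$ and a homeomorphism $R\colon U\to U'$, $\mathcal{M}\mapsto\mathcal{M}^{\beta(\mathcal{M}),\gamma(\mathcal{M})}$, onto its image, such that every $\mathcal{N}\in U'$ is $\kappa$-quadratic at $\tau_0$ and satisfies the normalization \eqref{quadractic_mode2} and the centering \eqref{positive_mode2}; since time-shift and parabolic dilation preserve the $\mathbb{Z}_2^k$-symmetry of $\mathcal{A}^{k,\circ}$ about the coordinate axes, $\mathcal{N}$ also satisfies \eqref{cross}, so $U'\subset\overline{\mathcal{A}}_\kappa(\tau_0)$ and $\mathcal{E}$ is defined and continuous on $U'$. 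I set $F=\mathcal{E}\circ R\colon U\to\Delta_{k-1}$; continuity is clear, and $F(\mathcal{M}_0)$ lies in the interior of $\Delta_{k-1}$ since $\mathcal{M}_0$ is a genuine $k$-oval with quadratic bending in all $k$ directions. I would obtain injectivity of $F$ from spectral uniqueness: by the $\mathbb{Z}_2^k$-symmetry the $y_iy_j$-components of $\mathfrak{p}_0v_\cC^{\mathcal{N}}(\tau_0)$ vanish, and the $y_j^2-2$ are pairwise $\cH$-orthogonal, so $\mathfrak{p}_0v_\cC^{\mathcal{N}}(\tau_0)$ is determined by the $k$ numbers $\langle v_\cC^{\mathcal{N}}(\tau_0),y_j^2-2\rangle_\cH$, whose ratios are recorded by $\mathcal{E}(\mathcal{N})$ and whose sum is pinned to $-\tfrac{\sqrt{2(n-k)}}{4|\tau_0|}\||\mathbf{y}|^2-2k\|^2_\cH$ by \eqref{quadractic_mode2}; thus the value of $F$ together with the normalization determines $\mathfrak{p}_0$, while \eqref{positive_mode2} determines $\mathfrak{p}_{+}$. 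Hence for $\mathcal{M}_1,\mathcal{M}_2\in U$ with $F(\mathcal{M}_1)=F(\mathcal{M}_2)$ the flows $R(\mathcal{M}_1),R(\mathcal{M}_2)$ have equal $\mathfrak{p}_0$ and $\mathfrak{p}_{+}$, so Theorem~\ref{spectral uniqueness restated} makes them coincide, and injectivity of $R$ gives $\mathcal{M}_1=\mathcal{M}_2$.

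The main obstacle is to show that $F$ is an \emph{open} map; granting this, $F$ is a continuous open injection, hence a homeomorphism onto an open subset of the $(k-1)$-manifold $\Delta_{k-1}$, and composing with a Euclidean chart of $\Delta_{k-1}$ produces a chart of $\mathcal{A}^{k,\circ}$ near $\mathcal{M}_0$ into an open subset of $\mathbb{R}^{k-1}$. It suffices that $F(U)$ contain a neighborhood of $F(\mathcal{M}_0)$ (the same argument then runs at every point of $U$), and here I would combine the surjectivity of the existence construction with uniqueness and compactness. By Corollary~\ref{prescribed_eccentricity_restated'}, every $\mu'$ in the small simplex $\simplex_{\frac{\delta\kappa}{c}}(1)$ about the barycenter equals $\mathcal{E}(R(\mathcal{M}(\mu')))$ for a compact oval $\mathcal{M}(\mu')\in\mathcal{A}^{k,\circ}$ --- compactness because by Claim~\ref{claim_boundedness} the underlying ellipsoidal parameters stay in a fixed compact subset of $\mathrm{Int}(\mathbb{R}^k_{+})$, and $R$ agrees here with the canonical recentering of Proposition~\ref{prop_orthogonality}, which is unique by the Jacobian estimate --- and $F(\mathcal{M}_0)$ lies strictly inside $\simplex_{\frac{\delta\kappa}{c}}(1)$ because $\kappa'\ll\delta\kappa$. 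It remains to show $\mathcal{M}(\mu')\in U$ once $\mu'$ is near $F(\mathcal{M}_0)$: if not, pick $\mu'_i\to F(\mathcal{M}_0)$ with $\mathcal{M}(\mu'_i)\notin U$; by sequential compactness of $\mathcal{A}^k$ a subsequence $\mathcal{M}(\mu'_i)\to\mathcal{M}_\infty$, still a compact oval by the same parameter bound; the continuity in Corollary~\ref{prescribed_eccentricity_restated'} of the recentering parameters gives $R(\mathcal{M}(\mu'_i))\to R(\mathcal{M}_\infty)$, whence $\mathcal{E}(R(\mathcal{M}_\infty))=\lim_i\mu'_i=F(\mathcal{M}_0)=\mathcal{E}(R(\mathcal{M}_0))$; since $R(\mathcal{M}_\infty)$ and $R(\mathcal{M}_0)$ are both $\kappa$-quadratic at $\tau_0$ and satisfy \eqref{quadractic_mode2}--\eqref{positive_mode2}, Theorem~\ref{spectral uniqueness restated} forces $R(\mathcal{M}_\infty)=R(\mathcal{M}_0)$, so $\mathcal{M}_\infty=\mathcal{M}_0$ and $\mathcal{M}(\mu'_i)\to\mathcal{M}_0\in U$, a contradiction.

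Since $\mathcal{M}_0$ was arbitrary, these charts cover $\mathcal{A}^{k,\circ}$ and are valued in open subsets of $\mathbb{R}^{k-1}$, with no boundary charts, so $\mathcal{A}^{k,\circ}$ is a $(k-1)$-dimensional manifold without boundary; it is non-compact, being a dense proper subset of its sequentially compact closure $\mathcal{A}^k$ and hence not closed there, so it is an open manifold, and with the metric from the first paragraph this proves the claim. The step I expect to cost the most work is the openness of $F$ --- namely the compactness-and-uniqueness argument that realizations of nearby spectral values converge to $\mathcal{M}_0$ in the local smooth topology --- together with the bookkeeping of thresholds ($\kappa'\ll\delta\kappa$, and $\tau_0$ below several $\mathcal{M}_0$-dependent and universal levels) needed so that Corollary~\ref{prop_continuous_orthogonality}, Corollary~\ref{prescribed_eccentricity_restated'} and Theorem~\ref{spectral uniqueness restated} all apply at the same $(\kappa,\tau_0)$.
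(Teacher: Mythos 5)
Your proposal is correct and follows essentially the same route as the paper: charts built from the local continuous recentering of Corollary~\ref{prop_continuous_orthogonality} composed with the spectral ratio map $\mathcal{E}(\cdot)(\tau_0)$, injectivity from Theorem~\ref{spectral uniqueness restated}, and openness via Corollary~\ref{prescribed_eccentricity_restated'} together with a compactness-plus-uniqueness sequence argument, which is exactly the paper's argument. The only cosmetic difference is the metrizability bookkeeping (the paper embeds $\mathcal{A}^{k,\circ}$ via convex radial graphs into $C^{\infty}(S^{n}\times(-\infty,0],\mathbb{R})$ to get second countability and metrizability, while you invoke a countable family of $C^m$-distances directly), which amounts to the same standard fact about the locally smooth topology.
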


\begin{proof}[proof of Claim \ref{metrizable}]
 For every $\mathcal{M}_0\in \mathcal{A}^{k, \circ}$, by Conjecture \ref{oval-koval} it is a $k$-oval and by Corollary \ref{prop_continuous_orthogonality} we can make $\kappa>0$ small and $\tau_{0}\leq \tau_\ast$ negative enough such that there is  a open neighborhood $U(\mathcal{M}_0)=U(\mathcal{M}_0, \kappa, \tau_*)$ of $\mathcal{M}_0$ and for every element $\mathcal{M}\in U(\mathcal{M}_0)$, there are $\beta=\beta(\mathcal{M}), \gamma=\gamma(\mathcal{M})$  continuously depending on $\mathcal{M}$ and $\mathcal{M}^{\beta,\gamma}$ is $\kappa$-quadratic at $\tau_0$  and its profile function satisfies the following orthogonality conditions,
 \begin{equation}
\mathfrak{p}_{+}\big(v^{\beta,\gamma}_{\cC}({\bf{y}}, \tau_{0})-\sqrt{2(n-k)}\big)=0,
\end{equation}
\begin{equation}\label{ON betagamma y2-2k}
         \Big\langle v^{\beta,\gamma}_{\cC}({\bf{y}}, \tau_0) +\frac{\sqrt{2(n-k)}(|\mathbf{y}|^2-2k)}{4|\tau_0|},|\mathbf{y}|^2-2k\Big\rangle_\cH=0,
    \end{equation}
and the transformation gives homeomorphism from $U(\mathcal{M}_0)$ to its image $U'(\mathcal{M}_0)$.
  Moreover, possibly after decreasing $\tau_\ast$ and choosing $\kappa$ small we can arrange that
\begin{equation}
\mathscr{E}(\mathcal{M}^{\beta,\gamma}_0)(\tau_0)\in \cylinders_{\frac{\delta\kappa}{|\tau_0|}}(\bar{e}).
\end{equation}
where 
\begin{equation}
\bar{e}=\frac{\sqrt{2(n-k)}\||\mathbf{y}|^2-2k\|^2_{\mathcal{H}}}{4|\tau_0|},
\end{equation}
and if we consider the spectral ratio map
\begin{equation}
\mathcal{E}(\mathcal{M}^{\beta,\gamma})(\tau_0)=  \left(\frac{\langle v^{\beta,\gamma}_{\cC}(\tau_{0}), y^2_{1}-2 \rangle_\cH}{\langle v^{\beta,\gamma}_{\cC}(\tau_{0}), |{\bf{y}}|^2-2k \rangle_\cH}, \dots,\frac{\langle v^{\beta,\gamma}_{\cC}(\tau_{0}),  y^2_{k}-2 \rangle_\cH}{\langle v^{\beta,\gamma}_{\cC}(\tau_{0}), |{\bf{y}}|^2-2k \rangle_\cH}\,\right),
\end{equation}
where the denominator of each coordinates is a constant depending on $\tau_0$ by \eqref{ON betagamma y2-2k}, we have
\begin{equation}
\bar{q}=\mathcal{E}(\mathcal{M}^{\beta,\gamma}_0)\in \Delta_{c_k\delta\kappa}.
\end{equation}
where $\Delta_{c_k\delta\kappa}$ is a $k-1$ dimensional $c_k\delta\kappa$ size scaling of the standard simplex $\{x\in \mathbb{R}^k_+: x_1+\dots+x_k=1\}$ with center at $p_k=(1/k,\dots, 1/k)$ and  the constant $c_k$ is given by $c_k=[\sqrt{2(n-k)}\||\mathbf{y}|^2-2k\|^2_{\mathcal{H}}]^{-1}$.

Then,  we consider the continuous map $\mathcal{E}(\tau_0): U'(\mathcal{M}_0)\to \Delta_1$. By
Theorem \ref{spectral uniqueness restated} (spectral uniqueness) it is a injective map. Moreover for its inverse $\mathcal{E}^{-1}(\tau_0): V= \mathcal{E}(\tau_0)(U'(\mathcal{M}_0)) \to U'(\mathcal{M}_0)$, it is also a continuous map. If there are $q_i\in V$ converges to $q_{\infty}\in V$, but the sequence of flows $\mathcal{M}^{i}=\mathcal{E}^{-1}(\tau_0)(q_i)$  converges to $\bar{\mathcal{M}}^{\infty}\not={\mathcal{M}}^{\infty}=\mathcal{E}^{-1}(\tau_0)(q_\infty)$. By the definition of $\kappa$-quadraticity at $\tau_0$,  the limit ${\mathcal{M}}^{\infty}$ of  $\mathcal{M}^{i}$ which are $\kappa$-quadratic at $\tau_0$ is still $\kappa$-quadratic at $\tau_0$. By continuity of $\mathcal{E}(\tau_0)$, we have 
\begin{equation}
    \mathcal{E}(\tau_0)(\bar{\mathcal{M}}^{\infty})=\lim_{i\to \infty}\mathcal{E}(\tau_0)({\mathcal{M}}^{i})=\lim_{i\to \infty} q_i=q_{\infty}= \mathcal{E}(\tau_0)(\mathcal{M}^{\infty})
\end{equation}
 By 
Theorem \ref{spectral uniqueness restated} (spectral uniqueness), $\mathcal{M}^{\infty}=\bar{\mathcal{M}}^{\infty}$, which is a contradiction. Therefore  $\mathcal{E}^{-1}(\tau_0): V= \mathcal{E}(\tau_0)(U'(\mathcal{M}_0)) \to U'(\mathcal{M}_0)$ is also a continuous map. By Corollary \ref{prescribed_eccentricity_restated'} (existence with prescribed spectral value) and spectral ratio map $\mathcal{E}$ being a composition of maps $p\circ\mathscr{E}\circ\pi^{-1}$, there is a small open set  $W$ containing $\bar{q}$ such that the open set $\mathcal{E}^{-1}(W)\subset U'(\mathcal{M}_0)$, otherwise a similar sequence convergence  argument with
Theorem \ref{spectral uniqueness restated} (spectral uniqueness) resulting in a contradiction. Hence, $\mathcal{E}^{-1}(\tau_0): W\to \mathcal{E}^{-1}(\tau_0)(W)\subset U'(\mathcal{M}_0)$ composing with the inverse transformation of time-shift and parabolic dilation gives a  local homeomorphism coordinates
chart in an open neighborhood of $\mathcal{M}_0\in \mathcal{A}^{k, \circ}$. Moreover, by local homeomorphism of spectral map $\mathcal{E}$ given above and Theorem \ref{spectral uniqueness restated} (spectral uniqueness), $\mathcal{A}^{k, \circ}$ is Hausdorff. Because the ancient ovals in $\mathcal{A}^{k, \circ}$ are convex, we can use radial graph representation to regard 
$\mathcal{A}^{k, \circ}$ as a subspace of the mapping spaces of $C^{\infty}(S^{n}\times (-\infty, 0], \mathbb{R})$ endowed with  compact topology for locally smooth convergence. Note that $C^{\infty}(S^{n}\times (-\infty, 0], \mathbb{R})$ is second countable due to the second countable Hausdorff manifolds and local compactness properties of $S^{n}\times (-\infty, 0]$ and $\mathbb{R}$ (see \cite[Section 4]{kundu2007countability}, \cite[Ex 3.4H]{engelking1989general}, \cite{michael1961theorem}). Hence we know that the subspace $\mathcal{A}^{k, \circ}$ of $C^{\infty}(S^{n}\times (-\infty, 0], \mathbb{R})$ is second countable. Hence, $\mathcal{A}^{k, \circ}$ is an $(k-1)$-dimensional open topological manifold, and the local coordinate map is given by spectral ratio map $\mathcal{E}$ and its inverse. Moreover, $\mathcal{X}$ obtained from $\mathcal{A}^{k, \circ}$ by modulo the  proper discrete group ${\bf S}_k$ action is also an $(k-1)$-dimensional open topological manifold. Moreover, compactified space $\mathcal{A}$ of $\mathcal{A}^{\circ}$  is metrizable by convex radial representation again and noting that $C^{\infty}(S^{n}\times (-\infty, 0], \mathbb{R})$ with local smooth convergence topology is metrizable by hemi-compactness  of $S^{n}\times (-\infty, 0], \mathbb{R}$ (namely it has a sequence of compact subsets such that every compact subset of the space lies inside some compact set in the sequence, see \cite[Section 7]{arens1946topology}, \cite[Example IV.2.2]{conway1994course} also \cite[Section 1.1, 1.2]{treves2016topological} and \cite[Part I, Fréchet spaces, examples]{infusinotopological} and it holds for the locally smooth convergence compact open topology as well). The second countable  property of $\mathcal{A}^{k, \circ}$ follows from compact metric space property of the compactified space $\mathcal{A}^{k}$. 
\end{proof}

\begin{claim}\label{claim5.3}
 $\mathcal{A}^{k, \circ}$   is contractible and in particular is path connected and simply connected. 
\end{claim}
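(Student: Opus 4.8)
The plan is to show contractibility of $\mathcal{A}^{k,\circ}$ by exhibiting an explicit homeomorphism with a convex set, and then derive path-connectedness and simple-connectedness as immediate consequences. The key observation is that the previous claims already do most of the work: by Claim \ref{metrizable} the spectral ratio map $\mathcal{E}$ (suitably composed with the canonical time-shift and parabolic-rescaling maps from Proposition \ref{time-shift} and Corollary \ref{prop_continuous_orthogonality}) provides local homeomorphism charts, and Theorem \ref{spectral uniqueness restated} (spectral uniqueness) guarantees injectivity of $\mathcal{E}$ globally on the space of canonically recentered $k$-ovals. Thus the first step is to assemble the local charts into a single global continuous injection $\Phi: \mathcal{A}^{k,\circ} \to \Delta_{k-1}$, where $\Delta_{k-1}$ is the open standard simplex, sending each $\mathcal{M}$ to $\mathcal{E}(\mathcal{M}^{\beta(\mathcal{M}),\gamma(\mathcal{M})})(\tau_0)$; here $\beta(\mathcal{M}),\gamma(\mathcal{M})$ are the recentering parameters and the point is that these are globally well-defined (not just locally), because the recentering conditions \eqref{bg ON+}--\eqref{bg ON y2-2k} uniquely pin down $(\beta,\gamma)$ via the Jacobian estimate in Proposition \ref{JPhiestimates} together with Proposition \ref{time-shift}. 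One needs $\tau_0$ sufficiently negative depending on $\mathcal{M}$, so strictly speaking one fixes a nested exhaustion and uses that the construction is compatible across different choices of $\tau_0$; this compatibility is exactly what makes $\Phi$ globally well-defined.

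Next I would prove surjectivity of $\Phi$ onto $\Delta_{k-1}$. Corollary \ref{prescribed_eccentricity_restated'} (existence with prescribed spectral value) gives, for every $\mu' \in \simplex_{\delta\kappa/c}(1)$ sufficiently close to the barycenter $(1/k,\dots,1/k)$, an ancient oval in $\mathcal{A}^{k,\circ}$ realizing that spectral ratio value. This only covers a neighborhood of the barycenter, so to get all of $\Delta_{k-1}$ one combines this with parabolic rescaling: rescaling does not change the element of $\mathcal{X}$ but the relevant point is that by letting $\tau_0$ vary (equivalently, by the blowdown/blowup analysis in the proof of Claim \ref{claim_boundedness}, where $\lim_{\gamma\to\pm\infty}$ of the rescaled profile is the cylinder resp. the round sphere) the realizable spectral-ratio values exhaust the full open simplex. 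More carefully: for any target $\mu'\in\Delta_{k-1}$, pick $\tau_0$ negative enough that $\mu'$ lies in the cylinder $\cylinders_{\delta\kappa/|\tau_0|}(\bar e)$ after projecting, which is possible since the admissible region grows as $|\tau_0|\to\infty$ relative to the simplex normalization — actually the cleanest route is to observe that $\Phi$ is a continuous injection of a $(k-1)$-manifold into a $(k-1)$-manifold, hence open by invariance of domain, so $\Phi(\mathcal{A}^{k,\circ})$ is open in $\Delta_{k-1}$; combined with a closedness/properness argument using sequential compactness of $\mathcal{A}^k$ (the closure) from the Preliminaries, one concludes $\Phi$ is onto. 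Then $\Phi$ is a continuous open bijection, hence a homeomorphism, so $\mathcal{A}^{k,\circ} \cong \Delta_{k-1}$.

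Finally, since $\Delta_{k-1}$ is a convex subset of $\mathbb{R}^k$, it is contractible via the straight-line homotopy to its barycenter, and contractibility transports across the homeomorphism $\Phi$. Contractible spaces are path-connected and simply connected (indeed all homotopy groups vanish), which yields the stated conclusion.

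\begin{proof}[Proof of Claim \ref{claim5.3}]
By Claim \ref{metrizable}, $\mathcal{A}^{k,\circ}$ is a $(k-1)$-dimensional open topological manifold, and by Theorem \ref{spectral uniqueness restated} (spectral uniqueness) the map
\[
\Phi:\mathcal{A}^{k,\circ}\longrightarrow \Delta_{k-1},\qquad
\Phi(\mathcal{M})=\mathcal{E}\big(\mathcal{M}^{\beta(\mathcal{M}),\gamma(\mathcal{M})}\big)(\tau_0),
\]
is well defined and injective, where $\beta(\mathcal{M}),\gamma(\mathcal{M})$ are the unique time-shift and parabolic-dilation parameters provided by Proposition \ref{prop_orthogonality}, Proposition \ref{JPhiestimates} (Jacobian estimate), Proposition \ref{time-shift} (shift map) and Corollary \ref{prop_continuous_orthogonality} (local continuous orthogonality) so that $\mathcal{M}^{\beta,\gamma}$ is $\kappa$-quadratic at $\tau_0$ and its truncated profile satisfies \eqref{bg ON+} and \eqref{bg ON y2-2k} (here $\tau_0$ is chosen negative enough depending on $\mathcal{M}$, and the construction is compatible across the choices of $\tau_0$, so $\Phi$ is globally well defined). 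By Claim \ref{metrizable} the parameters $\beta(\mathcal{M}),\gamma(\mathcal{M})$ depend continuously on $\mathcal{M}$, and the spectral ratio map is continuous, so $\Phi$ is continuous. Being a continuous injection between $(k-1)$-manifolds, $\Phi$ is open by invariance of domain; hence $\Phi(\mathcal{A}^{k,\circ})$ is open in $\Delta_{k-1}$.

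We next show $\Phi$ is surjective onto $\Delta_{k-1}$. The image is nonempty and open; we claim it is also closed in $\Delta_{k-1}$. Indeed, let $\mu_i'\in\Phi(\mathcal{A}^{k,\circ})$ with $\mu_i'\to\mu_\infty'\in\Delta_{k-1}$, and write $\mu_i'=\Phi(\mathcal{M}_i)$. Choosing $\tau_0$ negative enough (depending on $\mu_\infty'$), Corollary \ref{prescribed_eccentricity_restated'} (existence with prescribed spectral value) together with parabolic rescaling and the blowup/blowdown analysis in the proof of Claim \ref{claim_boundedness} (where $\lim_{\gamma\to\pm\infty}$ of the rescaled profile at $\tau_0$ is the cylinder, resp.\ the round sphere, so the realizable spectral-ratio values exhaust the full open simplex) shows that $\mu_\infty'$ is realized: there is $\mathcal{M}_\infty\in\mathscr{A}'$, $\kappa$-quadratic at $\tau_0$, with $\mathscr{E}(\mathcal{M}_\infty)(\tau_0)$ projecting to $\mu_\infty'$, and by Proposition \ref{time-shift} and the construction of $\mathcal{A}^{k,\circ}$ it arises from an element of $\mathcal{A}^{k,\circ}$. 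Hence $\mu_\infty'\in\Phi(\mathcal{A}^{k,\circ})$, proving closedness. Since $\Delta_{k-1}$ is connected and $\Phi(\mathcal{A}^{k,\circ})$ is nonempty, open and closed, we get $\Phi(\mathcal{A}^{k,\circ})=\Delta_{k-1}$.

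Thus $\Phi$ is a continuous open bijection, hence a homeomorphism $\mathcal{A}^{k,\circ}\cong\Delta_{k-1}$. Since $\Delta_{k-1}$ is convex, the straight-line homotopy $H(x,t)=(1-t)x+t\,p_k$ onto the barycenter $p_k=(\tfrac1k,\dots,\tfrac1k)$ contracts $\Delta_{k-1}$ to a point, so $\Delta_{k-1}$, and hence $\mathcal{A}^{k,\circ}$, is contractible. In particular $\mathcal{A}^{k,\circ}$ is path connected and simply connected.
\end{proof}
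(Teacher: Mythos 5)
Your argument hinges on a single global map $\Phi(\mathcal{M})=\mathcal{E}\big(\mathcal{M}^{\beta(\mathcal{M}),\gamma(\mathcal{M})}\big)(\tau_0)$ with ``$\tau_0$ chosen negative enough depending on $\mathcal{M}$'' and an asserted ``compatibility across the choices of $\tau_0$.'' This is the genuine gap: no such compatibility holds, and none is proved anywhere in the paper. The spectral ratio at time $\tau_0$ depends essentially on $\tau_0$; indeed, by the very definition of a $k$-oval (Definition \ref{def-koval}), $\langle v_{\cC}(\tau_0),y_j^2-2\rangle_{\cH}=-\tfrac{\sqrt{2(n-k)}}{4|\tau_0|}\|y_1^2-2\|_{\cH}^2+o(|\tau_0|^{-1})$ for every fixed oval, so $\mathcal{E}(\tau_0)(\mathcal{M})\to(1/k,\dots,1/k)$ as $\tau_0\to-\infty$. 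Thus different admissible choices of $\tau_0$ give different values of $\Phi(\mathcal{M})$, and for a fixed $\tau_0$ the chart is only defined on the ovals that are $\kappa$-quadratic at that $\tau_0$ and only takes values in the small simplex $\simplex_{\delta\kappa/c}(1)$ near the barycenter --- which is also why your surjectivity step fails: Corollary \ref{prescribed_eccentricity_restated'} produces ovals only for targets in $\simplex_{\delta\kappa/c}(1)$, not in all of $\Delta_{k-1}$, and the blowup/blowdown analysis in Claim \ref{claim_boundedness} controls the overall scale (the diagonal mode $\bar e$), not the anisotropy ratios, so it cannot be used to ``exhaust the full open simplex'' at a fixed $\tau_0$. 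A further warning sign: if your $\Phi$ existed, you would have proved $\mathcal{A}^{k,\circ}\cong\Delta_{k-1}\cong\mathbb{R}^{k-1}$ outright, rendering Claim \ref{claim5.4} and the paper's subsequent appeal to the topological characterizations of $\mathbb{R}^{k-1}$ (Stallings, Freedman, etc.) unnecessary; that identification is exactly the hard part and cannot be obtained from a single spectral chart.

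The paper's actual proof is weaker in its intermediate conclusion and avoids this issue: using Corollary \ref{prescribed_eccentricity_restated'} it shows that every compact subset of $\mathcal{A}^{k,\circ}$ (after the time-shift homeomorphism) lies in $\mathcal{E}^{-1}(\tau_j)\big(\simplex_{\delta\kappa/c}(1)\big)$ for some $\tau_j$, so $\mathcal{A}^{k,\circ}$ is a monotone union of open contractible sets (topological balls given by the local charts at times $\tau_j\to-\infty$) and is $\sigma$-compact and metrizable, hence normal; contractibility then follows from the monotone-union theorem of \cite{ancel2016monotone}. If you want to salvage your approach, you would have to either prove a genuine compatibility (e.g.\ a canonical, $\tau_0$-independent spectral parametrization, which is essentially what Bamler--Lai's quadratic mode at $-\infty$ provides, but which this paper does not construct) or fall back on the exhaustion-by-contractible-charts argument as in the paper.
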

\begin{proof}[proof Claim \ref{claim5.3}]
By Corollary  \ref{prescribed_eccentricity_restated'}  (existence with prescribed spectral value), for any compact set $K\subset \mathcal{A}^{k, \circ, \prime}$ (the time shifted homeomorphism image of $\mathcal{A}^{k, \circ}$), we can find $\kappa>0$ small and $\tau_{0}$ negative such that $K$ is contained inside a closed topological ball
\begin{equation}
K\subset \mathcal{E}^{-1}(\tau_0)(\bar{\simplex}_{\frac{\delta\kappa}{2c}}(1))\subset \mathcal{E}^{-1}(\tau_0)(
\simplex_{\frac{\delta\kappa}{c}}(1)).
\end{equation} 
$\mathcal{A}^{k, \circ}$ and $\mathcal{A}^{k, \circ, \prime}$ (up to time shift homeomorphism) is exhaustion of increasing  sequence of closed topologically balls $\mathcal{E}^{-1}(\tau_j)(\bar{\simplex}_{\frac{\delta\kappa}{2c}}(1))$ and also  exhaustion of increasing open topologically balls 
 $\mathcal{E}^{-1}(\tau_j)(
\simplex_{\frac{\delta\kappa}{c}}(1))$ for some sequence of $\tau_j\to -\infty$ and every of them is contractible. In particular, $\mathcal{A}^{k, \circ}$  is $\sigma$-compact.  We also recall that a metrizable space $\mathcal{A}^{k, \circ}$ is normal space. Therefore by the result in \cite{ancel2016monotone}, we have
 $\mathcal{A}^{k, \circ}$  is contractible. In particular, the quotient space $\mathcal{A}^{k, \circ}$  is contractible so it is path connected and simply  connected.
  \end{proof}

\begin{claim}\label{claim5.4}
        When $k\geq 3$, $\mathcal{A}^{k, \circ}$ is simply connected at infinity.
    \end{claim}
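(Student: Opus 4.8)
The plan is to feed the nested exhaustion of $\mathcal{A}^{k,\circ}$ by topological balls produced in the proof of Claim~\ref{claim5.3} into the generalized (Brown--Mazur) Schoenflies theorem. Work with the time-shifted homeomorphic copy $\mathcal{A}^{k,\circ,\prime}$, write $M=\mathcal{A}^{k,\circ,\prime}$ and $m=\dim M=k-1$, and recall from the proof of Claim~\ref{claim5.3} (using Corollary~\ref{prescribed_eccentricity_restated'} and Theorem~\ref{spectral uniqueness restated}) that for each $\tau_j$ the spectral ratio map $\mathcal{E}(\tau_j)$ is a homeomorphism onto the fixed open simplex $\simplex_{\frac{\delta\kappa}{c}}(1)\cong\mathbb{R}^m$, and that $B_j:=\mathcal{E}(\tau_j)^{-1}(\simplex_{\frac{\delta\kappa}{c}}(1))$ and $C_j:=\mathcal{E}(\tau_j)^{-1}(\bar{\simplex}_{\frac{\delta\kappa}{2c}}(1))$ form interleaved increasing exhaustions $C_1\subset B_1\subset C_2\subset B_2\subset\cdots$ of $M$ (this interleaving is part of what Claim~\ref{claim5.3} supplies; the needed compactness of $\overline{B_j}$ comes from $\kappa$-quadraticity at $\tau_j$ being a closed condition that, for $\kappa$ small, forces the oval to be compact). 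The structural remark to extract is that $C_j$, being the preimage under $\mathcal{E}(\tau_j)$ of a concentric half-size closed sub-simplex of $\simplex_{\frac{\delta\kappa}{c}}(1)$, is a \emph{locally flat} closed $m$-ball in $B_j$, hence in $M$; in particular $\partial C_j=\mathcal{E}(\tau_j)^{-1}(\partial\bar{\simplex}_{\frac{\delta\kappa}{2c}}(1))$ is a bicollared $(m-1)$-sphere, the bicollar being the pullback of the evident radial bicollar of $\partial\bar{\simplex}_{\frac{\delta\kappa}{2c}}(1)$ inside $\simplex_{\frac{\delta\kappa}{c}}(1)$.

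Granting this, I would verify the definition of simple connectivity at infinity directly. First, $M$ has one end: $M\setminus C_j=\bigcup_{i>j}(B_i\setminus C_j)$ is an increasing union of connected sets — the complement of a compact flat ball in a ball of dimension $\geq2$ is connected — all containing $B_{j+1}\setminus C_j$, hence is connected. Now let $K\subset M$ be compact, choose $j_0$ with $K\subset\operatorname{int}C_{j_0}$, and put $L:=C_{j_0+1}$, which is compact and contains $K$ in its interior. Given a loop $\gamma\subset M\setminus L$, pick $N$ with $\gamma\subset B_N$; since $L\supset\operatorname{int}C_{j_0}$ this gives $\gamma\subset B_N\setminus\operatorname{int}C_{j_0}$, and it suffices to show $\gamma$ bounds a disk in $B_N\setminus\operatorname{int}C_{j_0}$, because $B_N\setminus\operatorname{int}C_{j_0}\subset M\setminus K$. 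Adjoining a point at infinity to the open ball $B_N$ gives $B_N^{+}\cong S^m$, inside which $C_{j_0}$ is a locally flat closed $m$-ball missing $\infty$; by Schoenflies its boundary sphere $\partial C_{j_0}$ separates $S^m$ into $\operatorname{int}C_{j_0}$ and a second open $m$-ball $V\ni\infty$ with $\bar V=V\cup\partial C_{j_0}$ a closed $m$-ball. Hence $B_N\setminus\operatorname{int}C_{j_0}=\bar V\setminus\{\infty\}$ is homeomorphic to a closed $m$-ball with an interior point deleted, i.e.\ to $S^{m-1}\times[0,1)$, which is simply connected since $m\geq3$. Thus $\gamma$ is null-homotopic in $M\setminus K$, and $K$ being arbitrary, $\mathcal{A}^{k,\circ}$ is simply connected at infinity.

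The point needing real care — and the reason contractibility alone does not suffice — is the local flatness of the balls $C_j$: without it $\partial C_j$ could be a wild sphere, Schoenflies would fail, and the end could be non-simply-connected (as for the Whitehead manifold in dimension $3$). Here local flatness is handed to us for free by the explicit homeomorphism $\mathcal{E}(\tau_j)$, so this obstacle dissolves, and the remaining ingredients (the one-end observation, the bookkeeping making $\overline{B_j}$ compact, and the invocation of Schoenflies) are routine. The hypothesis enters only through $m=k-1\geq3$, which is exactly what makes $S^{m-1}\times[0,1)$ simply connected.
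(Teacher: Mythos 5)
Your argument is correct in the range where the claim is actually needed, but it takes a genuinely different route from the paper's. The paper works inside a single spectral chart: given the compact set $D=\mathcal{E}(\tau_0)^{-1}(\bar{\simplex}_{\frac{\delta\kappa}{c}}(1))$ and a loop $\gamma$ avoiding it, one passes to a more negative time $\tau_1$, arranges that $\mathcal{E}(\tau_1)(\gamma)$ lies in the region between two concentric scaled simplices in $\mathbb{R}^{k-1}$ (a region homotopy equivalent to $S^{k-2}$), contracts it there, and pulls the homotopy back by $\mathcal{E}(\tau_1)^{-1}$. You instead keep the inner exhausting ball $C_{j_0}$ abstract, observe that its boundary sphere is bicollared because the radial bicollar of the standard sub-simplex pulls back through the chart homeomorphism, and invoke the Brown--Mazur generalized Schoenflies theorem in the one-point compactification of the larger ball $B_N$ to identify $B_N\setminus\operatorname{int}C_{j_0}$ with $S^{k-2}\times[0,1)$. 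What your route buys is robustness to the chart mismatch: you never need the image of the loop, or of the inner ball, under a later-time chart to sit nicely relative to a round simplex; you only need the set-theoretic nesting and compactness of the exhaustion, which is exactly the content you extract from Claim \ref{claim5.3} (and which you rightly flag as needing the compactness of the closed-chart-ball preimages, a point the paper's own proof leaves implicit). The price is the extra topological input (local flatness plus Schoenflies), which the paper's more elementary single-chart contraction avoids at the cost of an asserted compatibility between the $\tau_0$- and $\tau_1$-charts.

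One caveat: you write that the hypothesis enters only through $m=k-1\geq 3$, but the claim as stated assumes only $k\geq 3$, i.e.\ $m\geq 2$. For $k=3$ your shell is $S^{1}\times[0,1)$, which is not simply connected, so your argument only proves the claim for $k\geq 4$; the paper's proof has exactly the same limitation, since the planar region between two concentric $2$-dimensional simplices is not simply connected either, and indeed $\mathcal{A}^{3,\circ}\cong\mathbb{R}^{2}$ is not simply connected at infinity. This is harmless in context, because in the proof of Theorem \ref{cor_moduli} the claim is only invoked for $k\geq 4$ (the case $k=3$ is handled by the Riemann mapping theorem), but you should state the restriction $k\geq 4$ explicitly rather than read it into the hypothesis as written.
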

    \begin{proof}[proof Claim \ref{claim5.4}]
Because $\mathcal{A}^{k, \prime}\cong \mathcal{A}^{k, \circ}$ by time shift map. We only need to show   $\mathcal{A}^{k, \prime}$ is simply connected at infinity.   For any compact subset $K$ of $\mathcal{A}^{k, \prime}$, by compactness, $\kappa$-quadraticity at $\tau_0$, Theorem \ref{spectral uniqueness restated} (spectral uniqueness), Corollary \ref{prescribed_eccentricity_restated'}  (existence with prescribed spectral value),  we can find $\kappa>0$ and $\tau_0$ negative enough such that $\mathcal{E}(\tau_0)(K) \subset 
\simplex_{\frac{\delta\kappa}{2c}}(1)$. Let $D=\mathcal{E}(\tau_0)^{-1}(\bar\simplex_{\frac{\delta\kappa}{c}}(1))$. For any loop $\gamma \in \mathcal{A}^{k, \prime}- D$, by compactness of loop $\gamma$  and possbily by decreasing $\tau_0$ to some $\tau_1$. We may assume there exists a $C>0$, such that the loop $\mathcal{E}(\tau_1)(\gamma)\subset \simplex_{\frac{C\delta\kappa}{c}}(1)-\bar\simplex_{\frac{\delta\kappa}{c}}(1)$. Therefore, by simply connectedness of difference of two scaled simplexes $\simplex_{\frac{C\delta\kappa}{c}}(1)-\bar\simplex_{\frac{\delta\kappa}{c}}(1)$, we know that there is a continuous homotopy $H$ deforming $\mathcal{E}(\tau_1)(\gamma)$ to a point in $\simplex_{\frac{C\delta\kappa}{c}}(1)-\bar\simplex_{\frac{\delta\kappa}{c}}(1)$. Pulling back this homotopy by $\mathcal{E}(\tau_1)^{-1}$, we know that $\gamma\in \mathcal{A}^{k, \prime}- D$ can be continuously deformed into a point in $\mathcal{A}^{k, \prime}- D$. In particular, $\pi_{1}(\mathcal{A}^{k, \prime}-D)=0$  and the inclusion induced fundamental group homeomorphism
    \begin{equation}
        i_*:\pi_{1}(\mathcal{A}^{k, \prime}-D)\rightarrow  \pi_{1}(\mathcal{A}^{k, \prime}-K).
    \end{equation}
is a zero map and $\mathcal{A}^{k, \circ}$ is simply connected at infinity.
\end{proof}

Hence, by using Claim \ref{metrizable}, Claim \ref{claim5.3},  Claim \ref{claim5.4}  and the following topology facts about the classification of $1$-dimensional open connected manifold in case $k=2$, Riemann mapping theorem in case $k=3$ and in the case $k\geq 4$, we apply the topological fact that  any open contractible $(k-1)$-manifold that is simply-connected at infinity is homeomorphic to  $\mathbb{R}^{k-1}$ (see  \cite{wall1965open, edwards1963open, husch1969finding}
for $k=4$ case, and  \cite{freedman1982topology} for $k=5$ case, and\cite{stallings1962piecewise} for $k\geq 6$ cases for the related results) and we obtain homeomorphism 
\begin{equation}\label{5.21}
\mathcal{A}^{k, \circ} \cong \mathrm{Int}(\Delta_{k-1}). 
\end{equation}
Hence 
\begin{equation}
\mathcal{X}\cong \mathcal{A}^{k, \circ} / \mathbf{S}_k\cong \mathrm{Int}(\Delta_{k-1}) / \mathbf{S}_k
\end{equation}
follows from Claim \ref{claim5.1}, \eqref{5.21} and modulo the discrete proper action of $k$ coordinates permutation induced rigid motion subgroup $\mathbf{S}_k\subset O(k)$. This completes the proof of the proof of Theorem \ref{cor_moduli} (classification of ancient ovals and moduli space).

\end{proof}

\bibliography{kovals}

\bibliographystyle{alpha-short}

\vspace{5mm}

{\sc Beomjun Choi, Department of Mathematical Sciences,KAIST, Daejeon, Korea}

{\sc Wenkui Du, Department of Mathematics, Massachusetts Institute of Technology,  Massachusetts, 02139, USA}

{\sc Ziyi Zhao,Institute for Theoretical Sciences, Westlake University, China}

\vspace{5mm}

\emph{E-mail:} bchoi@kaist.ac.kr, 
wenkui.du@mit.edu, zhaoziyi@westlake.edu.cn.

\end{document}